\numberwithin{equation}{section}
\newtheorem{theorem}{Theorem}[section]
\newtheorem{lemma}[theorem]{Lemma}
\newtheorem{proposition}[theorem]{Proposition}
\newtheorem{definition}[theorem]{Definition}
\DeclareMathOperator{\Alg}{Alg}
\DeclareMathOperator{\ord}{ord}
\DeclareMathOperator{\alg}{alg}
\newcommand{\field}[1] {\mathbb{#1}}
\newcommand{\N}{\field{N}}
\newcommand{\R}{\field{R}}
\newcommand{\K}{\field{K}}
\def\a{\alpha}
\def\b{\beta}
\def\e{\varepsilon}
\def\D{\Delta}
\def\d{\delta}
\def\g{\gamma}
\def\G{\Gamma}
\def\l{\lambda}
\def\o{\omega}
\def\O{\Omega}
\def\p{\partial}
\def\r{\rho}
\def\s{\sigma}
\def\v{\varphi}
\def\ua{\uparrow}
\newcommand{\mc}{\mathcal}
\newcommand{\mf}{\mathfrak}
\begin{document}
	\title{Generating loops and isolas in semilinear elliptic BVP's}
\author{Juli\'an L\'opez-G\'omez, Juan Carlos Sampedro} \thanks{The authors have been supported by the Research Grants PGC2018-097104-B-I00 and PID2021-123343NB-I00 of the Spanish Ministry of Science and  Technology, and by the Institute of Interdisciplinar Mathematics of Complutense University. J. C. Sampedro
has been also supported by the PhD Grant PRE2019\_1\_0220 of the Basque Country Government.}
\address{Institute of Interdisciplinary Mathematics \\
	Department of Mathematical Analysis and Applied Mathematics \\
	Complutense University of Madrid \\
	28040-Madrid \\
	Spain.}
\email{julian@mat.ucm.es, juancsam@ucm.es}

\keywords{Loops and isolas. Positive and negative solutions. Bi-parametric bifurcation theory for Fredholm operators.}
\subjclass[2010]{35B32, 35B09, 35B45}

\begin{abstract}
In this paper, we ascertain the global $\l$-structure of the set of positive and negative solutions
bifurcating from $u=0$ for the semilinear elliptic BVP
\begin{equation*}
\left\{\begin{array}{ll}
-d\Delta u= \lambda\langle \mathfrak{a},\nabla u\rangle+u+\lambda u^{2}-u^{q} & \text{ in } \Omega, \\
u=0 & \text{ on } \partial\Omega,
\end{array}\right.
\end{equation*}
according to the values of $d>0$ and the integer number $q\geq 4$. Figures \ref{F1}--\ref{F3} summarize
the main findings of this paper according to the values of $d$ and $q$. Note that the role played by the parameter $\l$ in this model is very special, because,  besides measuring the strength of the convection, it quantifies the amplitude of the nonlinear term $\l u^2$. We regard to this problem as a mathematical toy
to generate solution loops and isolas in Reaction Diffusion equations.
\end{abstract}

\maketitle

\section{Introduction}

\noindent This paper studies the positive and the negative solutions of the elliptic semilinear problem
\begin{equation}
\label{i.1}
\left\{\begin{array}{ll}
-d\Delta u= \lambda\langle \mathfrak{a},\nabla u\rangle+u+\lambda u^{2}-u^{q} & \text{ in } \Omega, \\
u=0 & \text{ on } \partial\Omega,
\end{array}\right.
\end{equation}
where $\Omega$ is a bounded domain of class $\mathcal{C}^2$ of the Euclidean space $\mathbb{R}^{N}$, $N\geq 1$, with boundary $\p\O$, $d> 0$ is the diffusion coefficient, $q\geq 4$ is an integer number, $\mathfrak{a}\in \mathbb{R}^{N}\backslash\{0\}$, $\mathfrak{a}=(a_{1},\cdots,a_{N})$, and $\lambda\in\mathbb{R}$. In \eqref{i.1} we are denoting  by $\langle \cdot,\cdot\rangle$ the Euclidean product of $\R^N$, i.e.,
$$
\langle x, y\rangle =\sum_{i=1}^N x_iy_i \quad \hbox{for every}\;\;
x=(x_1,...,x_N),\, y=(y_1,...,y_N)\in\R^N,
$$
and we regard $\l$ and $d$ as bifurcation parameters: $\l$ the primary one, and $d$ the secondary. The main goal of this paper is ascertaining the evolution of the global $\l$-bifurcation diagrams of positive and negative solutions of \eqref{i.1} as $d$ varies in $(0,+\infty)$.
\par
The problem \eqref{i.1} is a multidimensional counterpart of the $1$-dimensional prototype problem introduced by the authors in \cite[Sect. 7]{JJ4} in the very special case when $d=1$, $q=4$, $\mf{a}=1$ and $\Omega=(0,\pi)$. In this paper we are interested in analyzing how vary the admissible multidimensional bifurcation $\l$-diagrams as the diffusion coefficient $d$ varies in $(0,+\infty)$ according to the value of $q$. Although the value of $q\geq 4$ is irrelevant when dealing with positive solutions, its oddity is extremely significant when dealing with negative solutions. The assumption $q\geq 4$ is required to keep unchanged the structure of the set of positive and negative solutions in a neighborhood of $(\l,u)=(0,0)$. As in the simplest one-dimensional model, our main technical devices here invoke the local and global bifurcation techniques for Fredholm operators discussed by the authors in \cite{JJ4}.
\par
Throughout this paper, for any given $V\in\mc{C}(\bar\O)$, we denote by $\s_1[-\D+V]$ the principal eigenvalue of $-\D+V$ in $\O$ under Dirichlet boundary conditions. To simplify notations, we will set
$\s_1\equiv \s_1[-\D]$. Also, we denote by $\v_0$ any principal eigenfunction associated to $\s_1$.
\par
Our results depend on the size of the secondary parameter $d>0$ and on the concrete value of $q\geq 4$. To describe our main findings, we need to divide them into three different blocks.
\par
Suppose $d\in (0,\s_1^{-1})$. Then, the set of positive solutions bifurcating from $u=0$ consists of a single compact connected component, $\mathscr{C}^+$,  linking $(-\l_1(d),0)$ to $(\l_1(d),0)$, where
\begin{equation}
\label{i.2}
\lambda_{1}(d):=\frac{2}{|\mathfrak{a}|}\sqrt{d(1-d\s_{1})}>0,
\end{equation}
while the set of negative solutions bifurcating from $u=0$ consists of another compact connected component, $\mathscr{C}^-$,  linking $(-\l_1(d),0)$ to $(\l_1(d),0)$, if $q\geq 5$ is odd, as illustrated in the first
plot of Figure \ref{F1}. In this figure, and in all subsequent ones, we are representing the value of the parameter $\l$ in abscisas versus the norm $\|u\|_{W^{2,p}(\O)}$, for some $p>N$, if $u>0$, or versus $-\|u\|_{W^{2,p}(\O)}$ if $u<0$. As usual, $W^{2,p}(\O)$ stands for the Sobolev space of the functions $u\in L^{p}(\O)$ having distributional derivatives $D^{\alpha}u\in L^{p}(\O)$ for $|\alpha|\leq 2$, and we denote by $W_0^{2,p}(\O)$ the kernel of the trace operator $\mathscr{T}: W^{1,p}(\O)\to L^p(\p\O)$.

\begin{center}
	\begin{figure}[h!]

	\tikzset{every picture/.style={line width=0.75pt}} %set default line width to 0.75pt
	
	\begin{tikzpicture}[x=0.75pt,y=0.75pt,yscale=-1,xscale=1]
	%uncomment if require: \path (0,300); %set diagram left start at 0, and has height of 300
	
	%Straight Lines [id:da5761259708579443]
	\draw    (159.46,44) -- (159.46,217) ;
	%Straight Lines [id:da056473395538891635]
	\draw [line width=2.25]    (258.46,130.5) -- (60.46,130.5) ;
	%Straight Lines [id:da3607656919964861]
	\draw    (460.46,44) -- (460.46,217) ;
	%Straight Lines [id:da6976914162353732]
	\draw [line width=2.25]    (559.46,130.5) -- (361.46,130.5) ;
	%Curve Lines [id:da8887806707682029]
	\draw [color={rgb, 255:red, 74; green, 144; blue, 226 }  ,draw opacity=1 ][line width=2.25]    (101,130) .. controls (128,109) and (155.08,46.99) .. (193,70) ;
	%Curve Lines [id:da8362229763699012]
	\draw [color={rgb, 255:red, 74; green, 144; blue, 226 }  ,draw opacity=1 ][line width=2.25]    (193,70) .. controls (256.08,109.99) and (234,104) .. (218.08,130.99) ;
	%Curve Lines [id:da43244031710875586]
	\draw [color={rgb, 255:red, 184; green, 233; blue, 134 }  ,draw opacity=1 ][line width=2.25]    (101,130) .. controls (41,182) and (165,223) .. (218.08,130.99) ;
	%Curve Lines [id:da48882058768630465]
	\draw [color={rgb, 255:red, 74; green, 144; blue, 226 }  ,draw opacity=1 ][line width=2.25]    (410,130) .. controls (437,109) and (464.08,46.99) .. (502,70) ;
	%Curve Lines [id:da3543451116648576]
	\draw [color={rgb, 255:red, 74; green, 144; blue, 226 }  ,draw opacity=1 ][line width=2.25]    (502,70) .. controls (565.08,109.99) and (543,104) .. (527.08,130.99) ;
	%Curve Lines [id:da12320922173538018]
	\draw [color={rgb, 255:red, 184; green, 233; blue, 134 }  ,draw opacity=1 ][line width=2.25]    (344,169) .. controls (352,164) and (361,167) .. (410,130) ;
	%Curve Lines [id:da492490947470428]
	\draw [color={rgb, 255:red, 184; green, 233; blue, 134 }  ,draw opacity=1 ][line width=2.25]    (527.08,130.99) .. controls (484,179) and (589,203) .. (600,196) ;
	
	% Text Node
	\draw (216,142) node [anchor=north west][inner sep=0.75pt]    {$\lambda _{1}(d)$};
	% Text Node
	\draw (32,139) node [anchor=north west][inner sep=0.75pt]    {$-\lambda _{1}(d)$};
	% Text Node
	\draw (530.08,140.99) node [anchor=north west][inner sep=0.75pt]    {$\lambda _{1}(d)$};
	% Text Node
	\draw (400,140) node [anchor=north west][inner sep=0.75pt]    {$-\lambda _{1}(d)$};
	% Text Node
	\draw (114,228) node [anchor=north west][inner sep=0.75pt]    {\quad $q\geq 5$ \hbox{is odd}};
	% Text Node
	\draw (419,229) node [anchor=north west][inner sep=0.75pt]    {\quad $q\geq 4$ \hbox{is even}};
	% Text Node
	\draw (166,30) node [anchor=north west][inner sep=0.75pt]    {$\| \cdot \| _{W{^{2,p}}}$};
	% Text Node
	\draw (466,26) node [anchor=north west][inner sep=0.75pt]    {$\| \cdot \| _{W{^{2,p}}}$};
	% Text Node
	\draw (225,63) node [anchor=north west][inner sep=0.75pt]    {$\mathscr{C}^{+}$};
	% Text Node
	\draw (529,60) node [anchor=north west][inner sep=0.75pt]    {$\mathscr{C}^{+}$};
	% Text Node
	\draw (552,199) node [anchor=north west][inner sep=0.75pt]    {$\mathscr{C}_{+}^{-}$};
	% Text Node
	\draw (186,181) node [anchor=north west][inner sep=0.75pt]    {$\mathscr{C}^{-}$};
	% Text Node
	\draw (346,172) node [anchor=north west][inner sep=0.75pt]    {$\mathscr{C}_{-}^{-}$};

	\end{tikzpicture}
		\caption{Two bifurcation diagrams when $d<\s_{1}^{-1}$}
		\label{F1}
	\end{figure}
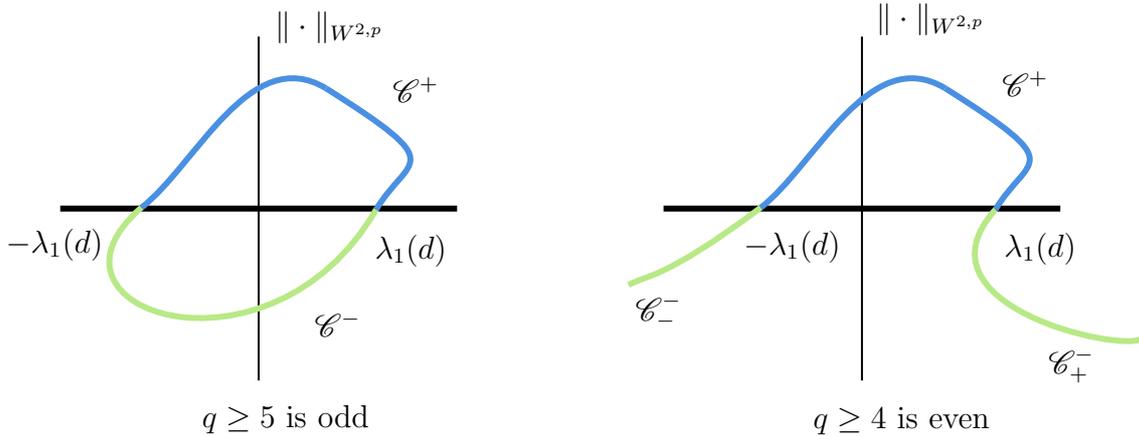
\end{center}

When $q\geq 4$ is an even integer and $N=1, 2$, or $q=4$ and $N=3$, we can prove that the global
bifurcation diagram of the negative solutions looks like shows the second plot of
Figure \ref{F1}, i.e., it contains two connected components of negative solutions, $\mathscr{C}^{-}_\pm$,
bifurcating from $(\pm\l_1(d),0)$, respectively,  such that
\begin{equation*}
(-\infty,-\lambda_{1}(d))\subset\mathcal{P}_{\l}(\mathscr{C}^{-}_{-}),\qquad
(\lambda_{1}(d),+\infty)\subset\mathcal{P}_{\l}(\mathscr{C}^{-}_{+}),
\end{equation*}
where $\mathcal{P}_{\l}$ stands for the $\l$-projection operator $\mathcal{P}_{\l}(\l,u):=\l$
for all $(\l,u)\in \R \times W^{2,p}(\O)$. We must impose $q=4$ when $N=3$ in order to get
\begin{equation}
\label{i.3}
  q<\frac{N+2}{N-2}
\end{equation}
and benefit of the existence of a priori bounds in $W^{2,p}_0(\O)$ for the negative solutions of
\eqref{i.1}, because these solutions are given through the positive solutions of a certain superlinear problem at $u=+\infty$.
\par
Further, as $d$ increases up to reach the critical value $\l=\s_1^{-1}$, we have that
\begin{equation}
\label{i.4}
  \lim_{d\ua \s_1^{-1}}\l_1(d)=0.
\end{equation}
Thus, the previous two bifurcation points from $u=0$, $(\pm\l_1(d),0)$, shrink to the single point $(0,0)$, which is the unique bifurcation point from $u=0$ to positive, or negative, solutions at $d=\s_1^{-1}$. Actually, assuming that $d=\s_1^{-1}$, we can prove that the set of positive solutions of \eqref{i.1} bifurcating from $(0,0)$ consists of a loop, $\mathscr{C}^+$, regardless whether $q\geq 4$ is even, or odd,  as illustrated by Figure \ref{F2}.

\begin{center}
	\begin{figure}[h!]

		\tikzset{every picture/.style={line width=0.75pt}} %set default line width to 0.75pt
		
		\begin{tikzpicture}[x=0.75pt,y=0.75pt,yscale=-1,xscale=1]
		%uncomment if require: \path (0,300); %set diagram left start at 0, and has height of 300
		
		%Straight Lines [id:da6800403685761103]
		\draw    (179.46,64) -- (179.46,237) ;
		%Straight Lines [id:da1916627292176637]
		\draw [line width=2.25]    (278.46,150.5) -- (80.46,150.5) ;
		%Straight Lines [id:da5466800859507744]
		\draw    (480.46,64) -- (480.46,237) ;
		%Straight Lines [id:da3092214915723187]
		\draw [line width=2.25]    (579.46,150.5) -- (381.46,150.5) ;
		%Curve Lines [id:da621078877468317]
		\draw [color={rgb, 255:red, 74; green, 144; blue, 226 }  ,draw opacity=1 ][line width=2.25]    (179.46,150.5) .. controls (179,99) and (203,72) .. (240,103) ;
		%Curve Lines [id:da1782031924841968]
		\draw [color={rgb, 255:red, 74; green, 144; blue, 226 }  ,draw opacity=1 ][line width=2.25]    (240,103) .. controls (251,115) and (239,129) .. (179.46,150.5) ;
		%Curve Lines [id:da7441790119355616]
		\draw [color={rgb, 255:red, 184; green, 233; blue, 134 }  ,draw opacity=1 ][line width=2.25]    (480.46,150.5) .. controls (482,212) and (521,218) .. (557,204) ;
		%Curve Lines [id:da28629041336011973]
		\draw [color={rgb, 255:red, 184; green, 233; blue, 134 }  ,draw opacity=1 ][line width=2.25]    (382,199) .. controls (390,194) and (430,169) .. (480.46,150.5) ;
		%Curve Lines [id:da35757137241971426]
		\draw [color={rgb, 255:red, 74; green, 144; blue, 226 }  ,draw opacity=1 ][line width=2.25]    (480.46,150.5) .. controls (480,99) and (504,72) .. (541,103) ;
		%Curve Lines [id:da26166941099924124]
		\draw [color={rgb, 255:red, 74; green, 144; blue, 226 }  ,draw opacity=1 ][line width=2.25]    (541,103) .. controls (552,115) and (540,129) .. (480.46,150.5) ;
		%Curve Lines [id:da913411230846767]
		\draw [color={rgb, 255:red, 184; green, 233; blue, 134 }  ,draw opacity=1 ][line width=2.25]    (108,188) .. controls (118,173) and (150,161) .. (180.46,150.5) ;
		%Curve Lines [id:da6674916483564677]
		\draw [color={rgb, 255:red, 184; green, 233; blue, 134 }  ,draw opacity=1 ][line width=2.25]    (108,188) .. controls (90,215) and (178,221) .. (179.46,150.5) ;
		
		% Text Node
		\draw (134,248) node [anchor=north west][inner sep=0.75pt]    {\quad $q\geq 5$ \hbox{is odd}};
		% Text Node
		\draw (439,249) node [anchor=north west][inner sep=0.75pt]    {\quad $q\geq 4$ \hbox{is even}};
		% Text Node
		\draw (186,50) node [anchor=north west][inner sep=0.75pt]    {$\| \cdot \| _{W{^{2,p}}}$};
		% Text Node
		\draw (486,46) node [anchor=north west][inner sep=0.75pt]    {$\| \cdot \| _{W{^{2,p}}}$};
		% Text Node
		\draw (245,83) node [anchor=north west][inner sep=0.75pt]    {$\mathscr{C}^{+}$};
		% Text Node
		\draw (549,80) node [anchor=north west][inner sep=0.75pt]    {$\mathscr{C}^{+}$};
		% Text Node
		\draw (564,214) node [anchor=north west][inner sep=0.75pt]    {$\mathscr{C}_{+}^{-}$};
		% Text Node
		\draw (87,204) node [anchor=north west][inner sep=0.75pt]    {$\mathscr{C}^{-}$};
		% Text Node
		\draw (391,206) node [anchor=north west][inner sep=0.75pt]    {$\mathscr{C}_{-}^{-}$};

		\end{tikzpicture}
		\caption{Two bifurcation diagrams when $d=\s_{1}^{-1}$}
		\label{F2}
	\end{figure}
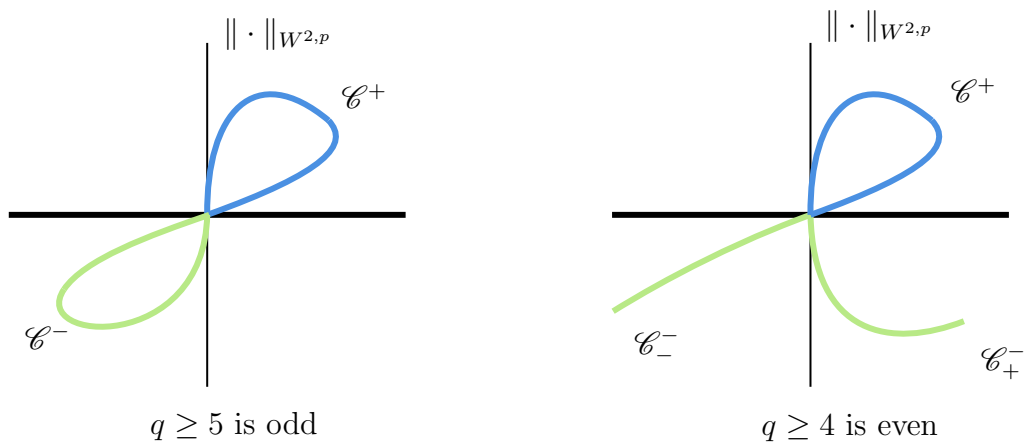
\end{center}

As in the previous case when $d<\s_1^{-1}$, the negative solutions of \eqref{i.1} can behave in a rather different manner, according to the values of $q$ and the spatial dimension $N$. For instance, as soon as $q\geq 5$ is an odd integer, the set of negative solutions of \eqref{i.1} bifurcating from $(0,0)$ consists of another loop, $\mathscr{C}^-$,
bifurcating from $(0,0)$, as sketched in the first plot of Figure \ref{F2}. However, when $q\geq 4$ is an even integer and $N=1, 2$, or $N=3$ and $q=4$, then the set of negative solutions emanating from $(0,0)$
consists of two disjoint connected components,  $\mathscr{C}^{-}_{-}$ and $\mathscr{C}^{-}_{+}$,  such that
\begin{equation}
\mathcal{P}_{\l}(\mathscr{C}^{-}_{-})=(-\infty,0),\qquad \mathcal{P}_{\l}(\mathscr{C}^{-}_{+})=(0,\infty),
\end{equation}
as shown in the right plot of Figure \ref{F2}. It turns out that \eqref{i.1} cannot admit any
negative solution for $\l=0$.
\par
It is worth-emphasizing that, in this case, the algebraic multiplicity of \cite{ELG} equals $2$ and hence, owing to \cite[Th. 5.6.2]{LG01}, the local topological index of $u=0$ does not change as $\l$ crosses the bifurcation value $0$. Consequently, except for the local results of Kielh\"{o}fer \cite{Ki}, no result is available in the literature  to get the global structure of the solution set of \eqref{i.1} bifurcating from $(0,0)$. Note that the global component bifurcating from $(0,0)$ respects \cite[Th. 6.3.1]{LG13}, as the
sum of the parities of its bifurcation points from $u=0$ equals $0$. Actually, according to these results, those loops can only exist when they bifurcate from a point with an even generalized algebraic multiplicity. Otherwise, they should satisfy the global alternative of Rabinowitz \cite{Ra}.
\par
Finally, we assume that  $d>\s_{1}^{-1}$ is sufficiently close to $\s_1^{-1}$. Then, when $q\geq 5$
is an odd integer number, the previous bifurcation diagrams evolve to the global bifurcation diagram plotted in the first picture of Figure \ref{F3}, where the two previous loops emanating from $(0,0)$ separate away from each other generating two compact components,  again denoted by $\mathscr{C}^+$ and $\mathscr{C}^-$,
filled in by positive and negative solutions, respectively, that are separated away from $u=0$. Thus, they are isolas with respect to $u=0$.
\par
Therefore, as $d$ crosses the critical value $\s_1^{-1}$ and $q\geq 5$ is an odd integer, the set of
positive and negative solutions of \eqref{i.1} evolve according to the patterns sketched by the first
plots of Figures \ref{F1}--\ref{F3}, so exhibiting a genuine imperfect bifurcation. In some sense, in this case,  $d=\s_1^{-1}$ can be regarded as a sort of \emph{organizing center} for all admissible bifurcation diagrams of positive and negative solutions of \eqref{i.1}.
\par
As far as it is concerned with the negative solutions of \eqref{i.1} when $d>\s_1^{-1}$ and $q\geq 4$ is an even integer, we were able to prove the existence of a component, $\mathscr{C}^-$, perturbing from the former components $\mathscr{C}^-_{\pm}$ as $d$ perturbs from $\s_1^{-1}$, though it remains an open problem to ascertain whether, or not, $\mathcal{P}_{\l}(\mathscr{C}^{-})=\mathbb{R}$. Moreover, thanks to Lemmas \ref{le5.3} and \ref{le6.3}, it becomes apparent that, as $d$ increases, the $\lambda$-projections of the compact connected components $\mathscr{C}^{+}$, for every $q\geq 4$, and $\mathscr{C}^{-}$, for $q\geq 5$ odd, say
$$
  \mc{P}_{\l}(\mathscr{C}^{+})\equiv[\alpha^{+}(d),\beta^{+}(d)], \quad \mc{P}_{\l}(\mathscr{C}^{-})=[-\beta^{-}(d),-\alpha^{-}(d)],
$$
satisfy
$$
   \lim_{d\ua\infty} \alpha^{\pm}(d)=\infty=\lim_{d\ua \infty} \beta^{\pm}(d).
$$
Therefore, these components move away towards $\pm\infty$ as $d\ua \infty$. 
However, it remains an open problem to ascertain whether the components $\mathscr{C}^{+}$ and $\mathscr{C}^{-}$ diminish shrinking to a single point at some critical $d^*>\s_1^{-1}$ up to disappear for all further values of $d$, or if they are non-empty for all $d>0$.

\begin{center}
	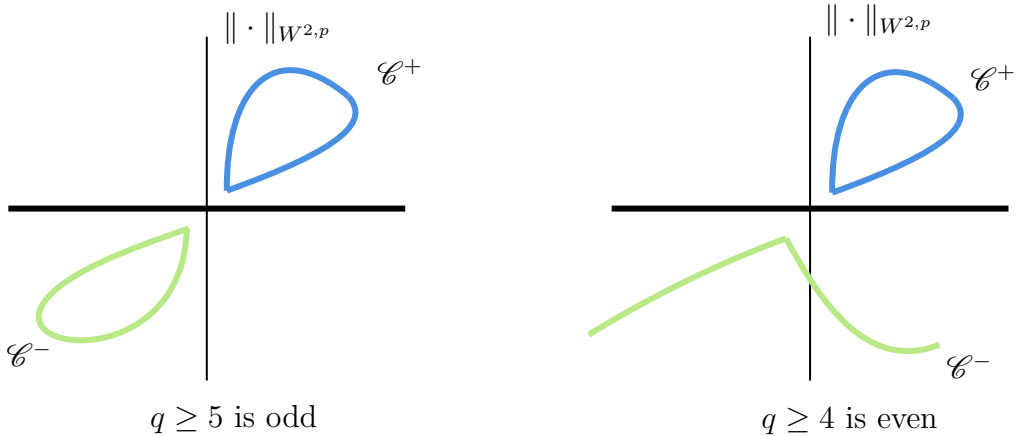
\begin{figure}[h]
	
	\tikzset{every picture/.style={line width=0.75pt}} %set default line width to 0.75pt
	
	\begin{tikzpicture}[x=0.75pt,y=0.75pt,yscale=-1,xscale=1]
	%uncomment if require: \path (0,300); %set diagram left start at 0, and has height of 300
	
	%Straight Lines [id:da6800403685761103]
	\draw    (179.46,64) -- (179.46,237) ;
	%Straight Lines [id:da1916627292176637]
	\draw [line width=2.25]    (278.46,150.5) -- (80.46,150.5) ;
	%Straight Lines [id:da5466800859507744]
	\draw    (480.46,64) -- (480.46,237) ;
	%Straight Lines [id:da3092214915723187]
	\draw [line width=2.25]    (579.46,150.5) -- (381.46,150.5) ;
	%Curve Lines [id:da621078877468317]
	\draw [color={rgb, 255:red, 74; green, 144; blue, 226 }  ,draw opacity=1 ][line width=2.25]    (189.46,141.5) .. controls (189,90) and (213,63) .. (250,94) ;
	%Curve Lines [id:da1782031924841968]
	\draw [color={rgb, 255:red, 74; green, 144; blue, 226 }  ,draw opacity=1 ][line width=2.25]    (250,94) .. controls (261,106) and (249,120) .. (189.46,141.5) ;
	%Curve Lines [id:da7441790119355616]
	\draw [color={rgb, 255:red, 184; green, 233; blue, 134 }  ,draw opacity=1 ][line width=2.25]    (468.46,165.5) .. controls (486,198) and (509,233) .. (545,219) ;
	%Curve Lines [id:da28629041336011973]
	\draw [color={rgb, 255:red, 184; green, 233; blue, 134 }  ,draw opacity=1 ][line width=2.25]    (370,214) .. controls (378,209) and (418,184) .. (468.46,165.5) ;
	%Curve Lines [id:da35757137241971426]
	\draw [color={rgb, 255:red, 74; green, 144; blue, 226 }  ,draw opacity=1 ][line width=2.25]    (491.46,142.5) .. controls (491,91) and (515,64) .. (552,95) ;
	%Curve Lines [id:da26166941099924124]
	\draw [color={rgb, 255:red, 74; green, 144; blue, 226 }  ,draw opacity=1 ][line width=2.25]    (552,95) .. controls (563,107) and (551,121) .. (491.46,142.5) ;
	%Curve Lines [id:da913411230846767]
	\draw [color={rgb, 255:red, 184; green, 233; blue, 134 }  ,draw opacity=1 ][line width=2.25]    (98,198) .. controls (108,183) and (140,171) .. (170.46,160.5) ;
	%Curve Lines [id:da6674916483564677]
	\draw [color={rgb, 255:red, 184; green, 233; blue, 134 }  ,draw opacity=1 ][line width=2.25]    (98,198) .. controls (80,225) and (168,231) .. (169.46,160.5) ;
	
	% Text Node
	\draw (134,248) node [anchor=north west][inner sep=0.75pt]    {\quad $q\geq 5$ is odd};
	% Text Node
	\draw (439,249) node [anchor=north west][inner sep=0.75pt]    {\quad $q\geq 4$ is even};
	% Text Node
	\draw (186,50) node [anchor=north west][inner sep=0.75pt]    {$\| \cdot \| _{W{^{2,p}}}$};
	% Text Node
	\draw (486,46) node [anchor=north west][inner sep=0.75pt]    {$\| \cdot \| _{W{^{2,p}}}$};
	% Text Node
	\draw (263,74) node [anchor=north west][inner sep=0.75pt]    {$\mathscr{C}^{+}$};
	% Text Node
	\draw (559,77) node [anchor=north west][inner sep=0.75pt]    {$\mathscr{C}^{+}$};
	% Text Node
	\draw (547,222) node [anchor=north west][inner sep=0.75pt]    {$\mathscr{C}^{-}$};
	% Text Node
	\draw (78,216) node [anchor=north west][inner sep=0.75pt]    {$\mathscr{C}^{-}$};
	% Text Node
	\draw (391,206) node [anchor=north west][inner sep=0.75pt]    {};

	\end{tikzpicture}
	\caption{Two bifurcation diagrams when $d>\s_{1}^{-1}$}
	\label{F3}
	\end{figure}
\end{center}

Under Dirichlet boundary conditions, increasing $d$ promotes a  rapid random movement of the individuals of the species $u$ towards the edges of their territory, $\O$,  where they are washed out by the hostile surroundings. Thus, the positive solutions should become extinct for sufficiently large $d>0$. But the role played  in this model by the parametric transport term $\lambda\langle\mf{a},\nabla u\rangle$ is not well understood yet, and, actually, it might push the individuals towards the interior of the inhabiting area as to avoid extinction. In a rather different context, the extinction for a sufficiently large diffusion coefficient was confirmed, numerically,  in  \cite{LGEMD} (see  \cite[Ch. 2]{LG01}).
\par
Although there is a number of available results concerning the formation of isolas and loops of positive solutions in the context of systems and semilinear elliptic equations (see, e.g., \cite{LGEMD}, \cite{Cano}, \cite{CLGM}, \cite{LGMM},  \cite{KQU} and \cite{FLG}, as well as the references there in), the problem \eqref{i.1} is of a rather different nature, as it inherits a sublinear nature as far it is concerns with
the positive solutions, instead of superlinear indefinite as in most of the references. Moreover, the parameter $\l$ appears incorporated to the differential equation in a rather different way. Actually,
\eqref{i.1} was introduced in \cite{JJ4} as an academic example for testing the abstract theory developed there in. Naturally, the parameter transitions described through this simple example might
enjoy a huge number of applications in applied sciences and engineering. Anyway, up to the best of our knowledge, the transition described by
the first plots of Figures \ref{F1}--\ref{F3} has not been previously described in the literature in
the context of semilinear BVP's.
\par
The fact that $d=\s_1^{-1}$ is a critical value for \eqref{i.1} should not really surprise us because $d\s_1-1$ is the principal eigenvalue of $-d\D - I$ in $\O$ under
Dirichlet boundary conditions. Thus, the stability of zero changes as $d$ crosses $\s_1^{-1}$. Maintaining
the global structure of the solution set when $q\geq 5$ is a more intriguing phenomenology.
\par
This paper is distributed as follows. In Section 2 we collect some preliminaries on the generalized algebraic multiplicity, $\chi$, introduced by Esquinas and L\'{o}pez-G\'{o}mez in  \cite{ELG}, \cite{Es} and \cite{LG01}, and show that any
positive  (resp. negative) solution of \eqref{i.1} is strongly positive (resp. negative). In Section 3, we study the linearization of \eqref{i.1} at $u=0$ to determine the structure of the bifurcation values to positive, or negative, solutions from $u=0$. In Section 4, we analyze the structure of the set of
positive and negative solutions of \eqref{i.1} in a neighborhood of their bifurcations points from $u=0$.
In Section 5 we show the existence of a priori bounds for the positive solutions of \eqref{i.1}. As it is a sublinear problem, these bounds are always available, regardless the value of $q\geq 4$. Things are more challenging concerning the existence of a priori bounds for the negative solutions, because they are given by the positive solutions of a superlinear problem when $q\geq 4$ is even. Thus, in such case, the existence of a priori bounds depends, heavily,  on the size of $q$ and the spatial dimension $N\geq 1$. In Section 6 we will adapt the blowing-up techniques of Gidas and Spruck \cite{GS,GS2}, to get these a priori bounds.
Finally, in Section 7 we will apply the abstract theory developed in \cite{JJ4} to prove the existence of the components $\mathscr{C}^\pm$ and $\mathscr{C}^-_\pm$ already introduced in the description of Figures \ref{F1}--\ref{F3}.
\par
Throughout this paper, for any given pair of real Banach spaces, $U$ and $V$, and any linear continuous operator $T: U\to V$, we will denote by $N[T]$ the null space, or kernel, of $T$, and by $R[T]$ the range, or image, of $T$.

\section{Preliminaries}

\noindent In this section we collect some fundamental properties of the generalized algebraic multiplicity, $\chi$, introduced by Esquinas and L\'{o}pez-G\'{o}mez \cite{ELG}, and later developed in \cite{Es} and \cite{LG01}. This concept is necessary to study the linealization of \eqref{i.1} at $u=0$. Then, we will use the Hopf's maximum principle to show that any positive solution of \eqref{i.1} is strongly positive, and that, similarly,  any negative solution is strongly negative.

\subsection{The generalized algebraic multiplicity}

\noindent Throughout this section, $\mathbb{K}\in\{\mathbb{R},\mathbb{C}\}$, $\Omega$ is a subdomain of $\mathbb{K}$, and, for any given finite dimensional curve $\mathfrak{L}\in\mathcal{C}(\Omega,\mathcal{L}(\mathbb{K}^{N}))$,
a point $\l\in\Omega$ is said to be a \textit{generalized eigenvalue} of $\mathfrak{L}$ if $\mathfrak{L}(\l)\notin GL(\mathbb{K}^{N})$, i.e., $\mathrm{det\,}\mf{L}(\l)=0$. Then, the
\textit{generalized spectrum} of $\mathfrak{L}\in\mathcal{C}(\Omega,\mathcal{L}(\mathbb{K}^{N}))$
is defined by
\begin{equation*}
\Sigma(\mathfrak{L}):=\{\lambda\in\Omega: \mathfrak{L}(\lambda)\notin GL(\mathbb{K}^{N})\}.
\end{equation*}
For analytic curves $\mathfrak{L}\in\mathcal{H}(\O,\mathcal{L}(\mathbb{K}^{N}))$, since $\mathrm{det\,}\mf{L}(\l)$ is analytic in $\l\in\O$, either $\Sigma(\mathfrak{L})=\Omega$,
or $\Sigma(\mathfrak{L})$ is discrete. Thus, $\Sigma(\mf{L})$ consists of isolated generalized
eigenvalues if $\mf{L}(\mu)\in GL(\mathbb{K}^N)$ for some $\mu\in\O$. In such case, the \textit{algebraic multiplicity} of the curve $\mathfrak{L}\in\mathcal{H}(\Omega,\mathcal{L}(\mathbb{K}^{N}))$ at $\lambda_{0}$ is defined through
\begin{equation}
\label{2.1}
\mathfrak{m}_{\alg}[\mathfrak{L},\lambda_{0}]:=\ord_{\lambda_{0}}\det\mathfrak{L}(\lambda).
\end{equation}
This concept extends the classical notion of algebraic multiplicity in linear algebra. Indeed,
if $\mathfrak{L}(\lambda)=\lambda I_{N}-T$ for some linear operator $T\in\mathcal{L}(\mathbb{K}^{N})$, then $\mathfrak{L}\in\mathcal{H}(\mathbb{K},\mathcal{L}(\mathbb{K}^{N}))$ and it is easily seen that $\mathfrak{m}_{\alg}[\mathfrak{L},\lambda_{0}]$ is well defined for all $\l_0\in\Sigma(\mf{L})$ and that
\eqref{2.1} holds. Note that, since $GL(\K^N)$ is open, $I_N-\l^{-1} T\in GL(\K^N)$ for sufficiently large $\l$. Thus, $\l I_N-T\in GL(\K^N)$ and $\Sigma(\mf{L})$ is discrete.
\par
This concept admits a natural (non-trivial) extension to an infinite-dimensional setting. To formalize it, we need to introduce some of notation. In this paper, for any given pair of $\mathbb{K}$-Banach spaces, say $U$ and $V$, we denote by $\Phi_0(U,V)$  the set of linear Fredholm operators of index zero between $U$ and $V$. Then, a \emph{Fredholm (continuous) path,} or curve, is any map $\mathfrak{L}\in \mathcal{C}(\Omega,\Phi_{0}(U,V))$.  Naturally, for any given $\mathfrak{L}\in \mathcal{C}(\Omega,\Phi_{0}(U,V))$, it is said that $\lambda\in\Omega$ is a \emph{generalized eigenvalue} of $\mathfrak{L}$ if $\mathfrak{L}(\lambda)\notin GL(U,V)$, and the \emph{generalized spectrum} of $\mathfrak{L}$, $\Sigma(\mathfrak{L})$,  is defined through   	
\begin{equation*}
\Sigma(\mathfrak{L}):=\{\lambda\in\Omega: \mathfrak{L}(\lambda)\notin GL(U,V)\}.
\end{equation*}
The following concept, going back to \cite{LG01}, plays a pivotal role in the sequel.

\begin{definition}
	\label{de2.1}
	Let $\mathfrak{L}\in \mathcal{C}(\Omega, \Phi_{0}(U,V))$ and $\kappa\in\mathbb{N}$. A generalized eigenvalue $\lambda_{0}\in\Sigma(\mathfrak{L})$ is said to be $\kappa$-algebraic if there exists $\varepsilon>0$ such that
	\begin{enumerate}
		\item[{\rm (a)}] $\mathfrak{L}(\lambda)\in GL(U,V)$ if $0<|\lambda-\lambda_0|<\varepsilon$;
		\item[{\rm (b)}] There exists $C>0$ such that
		\begin{equation}
		\label{2.2}
		\|\mathfrak{L}^{-1}(\lambda)\|<\frac{C}{|\lambda-\lambda_{0}|^{\kappa}}\quad\hbox{if}\;\;
		0<|\lambda-\lambda_0|<\varepsilon;
		\end{equation}
		\item[{\rm (c)}] $\kappa$ is the minimal integer for which \eqref{2.2} holds.
	\end{enumerate}
\end{definition}
Throughout this paper, the set of $\kappa$-algebraic eigenvalues of $\mathfrak{L}$ is  denoted by $\Alg_\kappa(\mathfrak{L})$, and the set of \emph{algebraic eigenvalues} by
\[
\Alg(\mathfrak{L}):=\bigcup_{\kappa\in\mathbb{N}}\Alg_\kappa(\mathfrak{L}).
\]
As in the special case when $U=V=\K^N$, according to Theorems 4.4.1 and 4.4.4 of \cite{LG01}, when $\mathfrak{L}(\lambda)$ is analytic in $\Omega$, i.e., $\mathfrak{L}\in\mathcal{H}(\Omega, \Phi_{0}(U,V))$,  then, either $\Sigma(\mathfrak{L})=\Omega$,
or $\Sigma(\mathfrak{L})$ is discrete and $\Sigma(\mathfrak{L})\subset \Alg(\mathfrak{L})$.
Subsequently, we denote by $\mathcal{A}_{\lambda_{0}}(\Omega,\Phi_{0}(U,V))$ the set  of curves $\mathfrak{L}\in\mathcal{C}^{r}(\Omega,\Phi_{0}(U,V))$ such that $\lambda_{0}\in\Alg_{\kappa}(\mathfrak{L})$ with $1\leq \kappa \leq r$ for some $r\in\mathbb{N}$.
Next, we will construct an infinite dimensional analogue of the classical algebraic multiplicity $\mathfrak{m}_{\alg}[\mathfrak{L},\l_{0}]$ for the class  $\mathcal{A}_{\lambda_{0}}(\Omega,\Phi_{0}(U,V))$. It can be carried out through the theory of Esquinas and L\'{o}pez-G\'{o}mez
\cite{ELG},  where the following pivotal concept, generalizing the transversality condition of
Crandall and Rabinowitz \cite{CR},  was introduced. Throughout this paper, we set
$\mathfrak{L}_{j}:=\frac{1}{j!}\mathfrak{L}^{(j)}(\lambda_{0})$, $1\leq j\leq r$, should these derivatives exist.

\begin{definition}
	\label{de2.3}
	Let $\mathfrak{L}\in \mathcal{C}^{r}(\O,\Phi_{0}(U,V))$ and $1\leq \kappa \leq r$. Then, a given $\lambda_{0}\in \Sigma(\mathfrak{L})$ is said to be a $\kappa$-transversal eigenvalue of $\mathfrak{L}$ if
	\begin{equation*}
	\bigoplus_{j=1}^{\kappa}\mathfrak{L}_{j}\left(\bigcap_{i=0}^{j-1}N(\mathfrak{L}_{i})\right)
	\oplus R(\mathfrak{L}_{0})=V\;\; \hbox{with}\;\; \mathfrak{L}_{\kappa}\left(\bigcap_{i=0}^{\kappa-1}N(\mathfrak{L}_{i})\right)\neq \{0\}.
	\end{equation*}
\end{definition}

For these eigenvalues, the following generalized concept of algebraic multiplicity was introduced by
Esquinas and L\'{o}pez-G\'{o}mez \cite{ELG},
\begin{equation}
\label{ii.3}
\chi[\mathfrak{L}, \lambda_{0}] :=\sum_{j=1}^{\kappa}j\cdot \dim \mathfrak{L}_{j}\left(\bigcap_{i=0}^{j-1}N[\mathfrak{L}_{i}]\right).
\end{equation}
In particular, when $N[\mf{L}_0]=\mathrm{span}[\v_0]$ for some $\v_0\in U$ such that $\mf{L}_1\v_0\notin R[\mf{L}_0]$, then
\begin{equation}
\label{ii.4}
\mf{L}_1(N[\mf{L}_0])\oplus R[\mf{L}_0]=V
\end{equation}
and hence, $\l_0$ is a 1-transversal eigenvalue of $\mf{L}(\l)$ with $\chi[\mf{L},\l_0]=1$. The transversality condition \eqref{ii.4} goes back to Crandall and Rabinowitz \cite{CR}. More generally, under condition \eqref{ii.4},
$$
\chi[\mf{L},\l_0]=\dim N[\mf{L}_0].
$$
According to Theorems 4.3.2 and 5.3.3 of \cite{LG01}, for every $\mathfrak{L}\in \mathcal{C}^{r}(\O, \Phi_{0}(U,V))$, $\kappa\in\{1,2,...,r\}$ and $\lambda_{0}\in \Alg_{\kappa}(\mathfrak{L})$, there exists a polynomial $\Phi: \O\to \mathcal{L}(U)$ with $\Phi(\lambda_{0})=I_{U}$ such that $\lambda_{0}$ is a $\kappa$-transversal eigenvalue of the path
\begin{equation}
\label{ii.5}
\mathfrak{L}^{\Phi}:=\mathfrak{L}\circ\Phi\in \mathcal{C}^{r}(\O, \Phi_{0}(U,V)),
\end{equation}
and $\chi[\mathfrak{L}^{\Phi},\lambda_{0}]$ is independent of the curve of \emph{trasversalizing local isomorphisms} $\Phi$ chosen to transversalize $\mathfrak{L}$ at $\lambda_0$ through \eqref{ii.5}. Therefore, the following concept of multiplicity
is consistent
\begin{equation}
\label{ii.6}
\chi[\mf{L},\l_0]:= \chi[\mathfrak{L}^{\Phi},\lambda_{0}],
\end{equation}
and it can be easily extended by setting
$\chi[\mathfrak{L},\lambda_0] =0$ if $\lambda_0\notin\Sigma(\mathfrak{L})$ and
$\chi[\mathfrak{L},\lambda_0] =+\infty$ if $\lambda_0\in \Sigma(\mathfrak{L})
\setminus \Alg(\mathfrak{L})$ and $r=+\infty$. Thus, $\chi[\mathfrak{L},\lambda]$ is well defined for all  $\lambda\in \O$ of any smooth path $\mathfrak{L}\in \mathcal{C}^{\infty}(\O,\Phi_{0}(U,V))$; in particular, for any analytical curve  $\mathfrak{L}\in\mathcal{H}(\O,\Phi_{0}(U,V))$.
The next uniqueness result, going back to Mora-Corral \cite{MC}, axiomatizes these concepts of algebraic multiplicity. Some refinements of them were delivered in \cite[Ch. 6]{LGMC}. Subsequently, given $\mathfrak{L}, \mathfrak{M} \in \mathcal{C}(\Omega, \Phi_{0}(U))$, we denote by $\mf{L}\circ\mf{M}\in\mathcal{C}(\O,\Phi_{0}(U))$, the curve defined through $[\mf{L}\circ\mf{M}](\l):=\mf{L}(\l)\circ\mf{M}(\l)$ for each $\l\in\O$.

\begin{theorem}
	\label{th24}
	Let $U$ be a $\mathbb{K}$-Banach space. For every $\lambda_{0}\in\mathbb{K}$ and any open neighborhood $\Omega_{\lambda_{0}}\subset\mathbb{K}$ of $\lambda_{0}$, the algebraic multiplicity $\chi$ is the unique map 		 
	\begin{equation*}
	\chi[\cdot, \lambda_{0}]: \mathcal{C}^{\infty}(\Omega_{\lambda_{0}}, \Phi_{0}(U))\longrightarrow [0,\infty]
	\end{equation*}
	such that
	\begin{enumerate}
		\item[{\rm (PF)}] For every pair $\mathfrak{L}, \mathfrak{M} \in \mathcal{C}^{\infty}(\Omega_{\lambda_{0}}, \Phi_{0}(U))$,
		\begin{equation*}
		\chi[\mathfrak{L}\circ\mathfrak{M}, \lambda_{0}]=\chi[\mathfrak{L},\lambda_{0}]+\chi[\mathfrak{M},\lambda_{0}].
		\end{equation*}
		\item[{\rm (NP)}] There exists a rank one projection $\Pi \in \mathcal{L}(U)$ such that
		\begin{equation*}
		\chi[(\lambda-\lambda_{0})\Pi +I_{U}-\Pi,\lambda_{0}]=1.
		\end{equation*}
	\end{enumerate}
\end{theorem}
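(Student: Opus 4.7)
The plan has two halves: existence and uniqueness. For existence, I would first verify that the functional $\chi$ already constructed in the excerpt (via \eqref{ii.3}, \eqref{ii.5}, \eqref{ii.6}) satisfies (PF) and (NP). The axiom (NP) is essentially immediate: for $\mf{L}(\l)=(\l-\l_0)\Pi+I_U-\Pi$ with $\Pi$ a rank-one projection, $\l_0$ is a $1$-transversal eigenvalue because $\mf{L}_0=I_U-\Pi$ and $\mf{L}_1=\Pi$ satisfy $\mf{L}_1(N[\mf{L}_0])\oplus R[\mf{L}_0]=R[\Pi]\oplus N[\Pi]=U$, so by \eqref{ii.3} one has $\chi[\mf{L},\l_0]=\dim N[\mf{L}_0]=1$. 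The product formula (PF) is more delicate, but, after reducing to transversal paths via \eqref{ii.6}, it reduces to a finite-dimensional determinantal computation via the identification with $\mf{m}_{\alg}$ in \eqref{2.1}.

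For uniqueness, I would let $\tilde\chi$ be any functional satisfying (PF) and (NP) and progressively pin down its values. First, taking $\mf{L}=\mf{M}=I_U$ in (PF) yields $\tilde\chi[I_U,\l_0]=2\tilde\chi[I_U,\l_0]$; provided this quantity is finite, $\tilde\chi[I_U,\l_0]=0$. Then, if $\mf{L}(\l_0)\in GL(U)$, the curve $\mf{L}$ admits a smooth inverse $\mf{L}^{-1}$ on a neighborhood of $\l_0$, so (PF) applied to $\mf{L}\circ \mf{L}^{-1}=I_U$ forces $\tilde\chi[\mf{L},\l_0]=0$ since both summands are non-negative. This normalization step is the first pillar of the argument.

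Next, I would reduce the general case to the transversal one. Given $\mf{L}\in \mathcal{A}_{\l_0}(\Omega,\Phi_0(U))$, invoke the Esquinas--L\'opez-G\'omez theorem: there exists a polynomial $\Phi$ with $\Phi(\l_0)=I_U$ such that $\mf{L}^{\Phi}=\mf{L}\circ\Phi$ is $\kappa$-transversal at $\l_0$. Because $\Phi(\l_0)=I_U\in GL(U)$, the previous paragraph gives $\tilde\chi[\Phi,\l_0]=0$, and (PF) yields $\tilde\chi[\mf{L},\l_0]=\tilde\chi[\mf{L}^{\Phi},\l_0]$. The problem thus reduces to evaluating $\tilde\chi$ on $\kappa$-transversal paths, where the direct-sum decomposition in Definition \ref{de2.3} lets one choose adapted projections $\Pi_1,\dots,\Pi_{\kappa}$ of finite rank on the spaces $\mf{L}_j(\bigcap_{i<j} N[\mf{L}_i])$.

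The decisive step, and the hardest, will be factorizing a $\kappa$-transversal curve as a finite product
\[
\mf{L}=\mf{L}_{*}\circ\prod_{j=1}^{\kappa}\bigl[(\l-\l_0)^{j}\Pi_{j}+I_U-\Pi_{j}\bigr],
\]
where $\mf{L}_*(\l_0)\in GL(U)$. Granting such a factorization, (PF) gives $\tilde\chi[\mf{L},\l_0]=\sum_{j=1}^{\kappa} j\cdot\dim R(\Pi_j)$ once we show $\tilde\chi[(\l-\l_0)^{j}\Pi+I_U-\Pi,\l_0]=j\cdot\dim R(\Pi)$. The latter follows by further decomposing $\Pi$ as a sum of rank-one projections (so that the corresponding path factors into commuting rank-one pieces) and iterating (NP) via the telescoping identity $(\l-\l_0)^{j}\Pi+I_U-\Pi=\bigl[(\l-\l_0)\Pi+I_U-\Pi\bigr]^{j}$, together with (PF). Comparing with \eqref{ii.3} then yields $\tilde\chi[\mf{L},\l_0]=\chi[\mf{L},\l_0]$, completing uniqueness. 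The algebraic construction of the factorization, essentially a reduction of the transversal path to Smith-like normal form, is the technical bottleneck and is where the bulk of Mora-Corral's work lies.
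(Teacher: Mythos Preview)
The paper does not actually supply a proof of this theorem: it is stated as a result from the literature, attributed to Mora-Corral \cite{MC} with refinements in \cite[Ch.~6]{LGMC}, and is immediately followed by commentary on its consequences rather than an argument. So there is no in-paper proof to compare your proposal against.

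That said, your sketch is a faithful outline of the strategy in those references. The existence half (verifying (PF) and (NP) for the Esquinas--L\'opez-G\'omez $\chi$) and the uniqueness half (normalizing on isomorphisms via (PF), reducing to transversal paths via the polynomial $\Phi$, then factorizing into elementary rank-one blocks to invoke (NP)) match the architecture of \cite{LGMC}. You correctly identify the factorization of a $\kappa$-transversal path into a Smith-like product of elementary paths as the technical core, and your telescoping identity $(\l-\l_0)^{j}\Pi+I_U-\Pi=[(\l-\l_0)\Pi+I_U-\Pi]^{j}$ is exactly the device used there. One minor point worth flagging: in deducing $\tilde\chi[I_U,\l_0]=0$ you assume finiteness of $\tilde\chi[I_U,\l_0]$; strictly speaking the codomain is $[0,\infty]$, so one must argue separately (e.g., via (NP) and (PF) applied to a suitable product) that this value cannot be $+\infty$, or else the cancellation step is illegitimate. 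This is handled in \cite{LGMC} but is a genuine, if small, gap in your write-up.
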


The axiom (PF) is the  \emph{product formula} and (NP) is a \emph{normalization property}
for establishing the uniqueness of $\chi$. From these two axioms one can derive the remaining properties of  $\chi$; among them, that it equals the classical algebraic multiplicity when
\begin{equation*}
\mathfrak{L}(\lambda)= \lambda I_{U} - K
\end{equation*}
for some compact operator $K$. Indeed, for every $\mathfrak{L}\in \mathcal{C}^{\infty}(\Omega_{\lambda_{0}},\Phi_{0}(U))$, the following properties are satisfied (see \cite{LGMC} for any further details):
\begin{itemize}
	\item $\chi[\mathfrak{L},\lambda_{0}]\in\mathbb{N}\uplus\{+\infty\}$;
	\item $\chi[\mathfrak{L},\lambda_{0}]=0$ if and only if $\mathfrak{L}(\lambda_0)
	\in GL(U)$;
	\item $\chi[\mathfrak{L},\lambda_{0}]<\infty$ if and only if $\lambda_0 \in\Alg(\mathfrak{L})$.
	\item If $U =\mathbb{K}^N$, then, in any basis,
	\begin{equation*}
	\label{1.1.18}
	\chi[\mathfrak{L},\lambda_{0}]= \mathrm{ord}_{\lambda_{0}}\det \mathfrak{L}(\lambda).
	\end{equation*}
	\item For every $K\in \mathcal{K}(U)$ and $\lambda_0\in \s(K)$,
	\begin{equation*}
	\label{1.1.90}
	\chi [\lambda I_U-K,\lambda_{0}]=\mathrm{dim\,}\mathrm{Ker}[(\l_0 I_{U}-K)^{\nu(\l_0)}],
	\end{equation*}
	where $\nu(\l_0)$ is the \emph{algebraic ascent} of $\l_0$, i.e., the minimal integer, $\nu\geq 1$, such that
	\[
	\mathrm{Ker}[(\l_0 I_{U}-K)^{\nu}]=\mathrm{Ker}[(\l_0 I_{U}-K)^{\nu+1}].
	\]
\end{itemize}

\subsection{Strong positivity of the positive solutions of \eqref{i.1}} The change of variable
\begin{equation}
\label{2.7}
u(x)=\zeta(\lambda,d,x) v(x), \qquad \zeta(\lambda,d,x):=e^{-\frac{\lambda}{2d}\langle \mathfrak{a},x\rangle}, \qquad x\in\bar \Omega.
\end{equation}
transforms the problem \eqref{i.1} into the new problem
\begin{equation}
\label{2.8}
\left\{ \begin{array}{ll} -\Delta v= \tfrac{1}{d}\left(1-\tfrac{\lambda^{2} |\mathfrak{a}|^{2}}{4d}\right)v+f_{d}(\lambda,x,v)v & \quad \hbox{in}\;\; \Omega,\\
v =0 & \quad \hbox{on}\;\;\p\O, \end{array}\right.
\end{equation}
where
\begin{equation}
\label{2.9}
   f_{d}(\lambda,x,v):=d^{-1}(\lambda-\zeta^{q-2}(\lambda,d,x)v^{q-2})\zeta(\lambda,d,x)v.
\end{equation}
Thus, if $u\in W_0^{2,p}(\O)$, with $p>N$, is a positive (resp. negative) solution of \eqref{i.1}, then $v$ provides us  with a positive (resp. negative) solution of \eqref{2.8} in $W_0^{2,p}(\O)$. Consequently, the next result holds. Note that, thanks to the Rellich--Kondrachov theorem,   $W^{2,p}(\O)\hookrightarrow \mc{C}^{1,1-\frac{N}{p}}(\bar\O)$ (see, e.g., \cite{GT}, or \cite[Th. 4.5]{LG13}).

\begin{theorem}
\label{th2.4} Any positive solution $u \in W_0^{2,p}(\O)$ of \eqref{i.1} satisfies $u\gg 0$ in the sense that $u(x)>0$ for all $x\in\O$ and $\frac{\p u}{\p n}(x)<0$ for all $x\in\p\O$, where $n$ stands for the outward
unit normal to $\O$ along $\p\O$. Similarly, any negative solution, $w$, satisfies $w\ll 0$ in the sense that $-w\gg 0$.
\end{theorem}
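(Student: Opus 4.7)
The plan is to exploit the change of variable \eqref{2.7}, which eliminates the first-order convection term, and then to apply the Hopf strong maximum principle to the reduced equation \eqref{2.8}.

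First, I would observe that $\zeta(\lambda,d,x)=e^{-\frac{\lambda}{2d}\langle\mf{a},x\rangle}$ is strictly positive and of class $\mc{C}^{\infty}(\bar\O)$, and moreover it is bounded away from $0$ and $\infty$ on the compact set $\bar\O$. Consequently, the pointwise identity $u(x)=\zeta(\lambda,d,x)v(x)$ yields that $u$ and $v$ share their sign at every $x\in\bar\O$, and that $u\in W^{2,p}_0(\O)$ if and only if $v\in W^{2,p}_0(\O)$. In particular, any positive solution $u\in W^{2,p}_0(\O)$ of \eqref{i.1} is transformed into a solution $v\in W^{2,p}_0(\O)$ of \eqref{2.8} with $v\geq 0$ and $v\not\equiv 0$, and similarly for negative solutions.

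Next, I would rewrite \eqref{2.8} in the form
\begin{equation*}
-\Delta v + c(x)\,v=0\quad\hbox{in }\O,
\end{equation*}
where
\begin{equation*}
c(x):=-\tfrac{1}{d}\Bigl(1-\tfrac{\lambda^{2}|\mf{a}|^{2}}{4d}\Bigr)-f_{d}(\lambda,x,v(x)).
\end{equation*}
By the Rellich--Kondrachov embedding $W^{2,p}(\O)\hookrightarrow \mc{C}^{1,1-\frac{N}{p}}(\bar\O)$ recalled in the statement, $v\in\mc{C}^{1}(\bar\O)$, and since $\zeta$ is smooth and $f_d$ is a polynomial in $\zeta$ and $v$, the coefficient $c$ lies in $L^{\infty}(\O)$ (indeed, it is continuous on $\bar\O$). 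Therefore, the Hopf maximum principle (see, e.g., \cite{GT} or \cite[Ch.~7]{LG13}) applies to the elliptic operator $-\Delta+c(x)$.

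Assuming $u>0$, and hence $v\geq 0$ with $v\not\equiv 0$, Hopf's theorem yields $v(x)>0$ for all $x\in\O$ and $\frac{\p v}{\p n}(x)<0$ for all $x\in\p\O$. From $u=\zeta v$ and $v|_{\p\O}=0$, I would compute
\begin{equation*}
\frac{\p u}{\p n}(x)=\frac{\p \zeta}{\p n}(x)\,v(x)+\zeta(\lambda,d,x)\,\frac{\p v}{\p n}(x)=\zeta(\lambda,d,x)\,\frac{\p v}{\p n}(x)<0\quad\hbox{on }\p\O,
\end{equation*}
since $\zeta>0$ on $\bar\O$. Combined with $u(x)=\zeta(\lambda,d,x)v(x)>0$ for $x\in\O$, this yields $u\gg 0$. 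For the negative case, the same argument applied to $w=-u$ and $\tilde v=-v\geq 0$, noting that $\tilde v$ satisfies the same linear equation $-\Delta\tilde v+c(x)\tilde v=0$ (with $c$ now evaluated along $-\tilde v$), produces $\tilde v\gg 0$ and hence $-w\gg 0$, i.e., $w\ll 0$.

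No essential obstacle is expected; the only point that requires care is the continuity (or at least $L^{\infty}$ bound) of the coefficient $c(x)$ so that Hopf's lemma is applicable, which is guaranteed by the $\mc{C}^{1}$-regularity of $v$ inherited from $W^{2,p}_0(\O)$ with $p>N$, and the verification that multiplication by the positive smooth factor $\zeta$ preserves both the interior strict sign and the Hopf boundary inequality.
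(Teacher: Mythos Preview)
Your argument is correct and follows the same route as the paper: transform via \eqref{2.7} to remove the drift, rewrite \eqref{2.8} as $(-\Delta+c(x))v=0$ with bounded $c$, and invoke the Hopf strong maximum principle and boundary lemma. The paper makes the shift $c\mapsto c+\omega\geq 0$ explicit before applying Hopf, whereas you absorb this into the cited reference; conversely, you spell out the computation of $\frac{\partial u}{\partial n}$ on $\partial\Omega$, which the paper leaves implicit---but the two proofs are essentially identical.
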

\begin{proof}
Thanks to a result of Bony \cite{Bo}, the Hopf maximum principle,  and the boundary lemma of Hopf--Oleinik work out in the space $W^{2,p}(\O)$ (see, e.g., \cite[Ch.1]{LG13}).
\par
Suppose $u$ is a positive solution of \eqref{i.1}. Then, the function $v$ defined through \eqref{2.7}
is a positive solution of \eqref{2.8} and hence, for some continuous function $b(x)$ whose expression is irrelevant  here, we have that
\begin{equation}
\label{2.10}
  (-\D + b(x))v=0 \quad \hbox{in}\;\;\O.
\end{equation}
Consider a sufficiently large constant, $\o>0$, such that $c:=b+\o\geq 0$ in $\O$. Then,
$$
  (-\D + c)v=\o v \geq 0
$$
and hence, since $\min_{\bar\O}v =0$, it follows from the Hopf maximum principle that $v$ cannot reach
its minimum in $\O$ unless it is constant. Thus, as it cannot be constant, $v(x)>0$ for all $x\in\O$. Moreover, since $\O$ is of class $\mc{C}^2$, by the Hopf--Oleinik boundary lemma, $\frac{\p v}{\p n}(x)<0$ for all $x\in\p\O$. Naturally, the change of variable \eqref{2.7}
preserves these properties. The fact that any negative solution is strongly negative is a direct consequence of the positivity result that we have just proven.
\end{proof}

An alternative proof can be delivered through \cite[Th. 7.10]{LG13}, since \eqref{2.10} entails $v\gneq 0$ to be a principal eigenfunction of $-\D+c$ associated with the eigenvalue $0$. Therefore, $0$ must be the principal eigenvalue of $-\D+c$ and $v\gg 0$ by the Krein--Rutman theorem.

\section{The linearization  of the problem \eqref{i.1} at $u=0$}

\noindent Throughout this paper, we set $\R_+=(0,\infty)$. In this section we study the linealization of \eqref{i.1} at $(\l,d,u)=(\l,d,0)$.
Note that, for every $p>N$,  the solutions of \eqref{i.1} can be regarded as the zeros of the nonlinear operator
$$
   \mathfrak{F}:\mathbb{R}\times \mathbb{R}_{+}\times W^{2,p}_{0}(\Omega)\longrightarrow L^{p}(\Omega)
$$
defined by
\begin{equation}
\label{iii.1}
\mathfrak{F}(\lambda, d, u):=d\Delta u+\l \langle \mathfrak{a},\nabla u\rangle +u+\l  u^{2}-u^{q}, \quad (\lambda, d, u)\in\mathbb{R}\times \mathbb{R}_{+}\times W^{2,p}_{0}(\Omega),
\end{equation}
whose linearization at $(\l, d, u)=(\l,d, 0)$ is given by the linear operator
$$
\mathfrak{L}(\lambda, d):=D_{u}\mathfrak{F}(\lambda,d, 0):
\mathbb{R}\times\mathbb{R}_{+}\longrightarrow\mathcal{L}(W^{2,p}_{0}(\Omega),L^{p}(\Omega))
$$
defined by
\begin{equation}
\label{iii.2}
\mathfrak{L}(\lambda,d)u:=d\Delta u+\lambda \langle \mathfrak{a},\nabla u\rangle+ u, \qquad
(\l,d, u)\in\R \times \mathbb{R}_{+}\times W^{2,p}_{0}(\Omega).
\end{equation}
As for some computations the presence of the gradient term $\nabla u$ in \eqref{iii.1} is somewhat involved, we will perform the change of variables \eqref{2.7}.
To accomplish this task, we introduce the operator surfaces
\begin{align*}
\mathfrak{P} & :\mathbb{R}\times\mathbb{R}_{+}\to GL(W^{2,p}_{0}(\Omega)), \quad
 \mathfrak{P}(\lambda,d)u:=e^{-\frac{\lambda}{2d}\langle \mathfrak{a},x\rangle }u,\\
\mathfrak{R} & :\mathbb{R}\times\mathbb{R}_{+}\to GL(L^{p}(\Omega)), \quad
\mathfrak{R}(\lambda,d)u:=e^{\frac{\lambda}{2d}\langle \mathfrak{a}, x\rangle}u,
\end{align*}
and the associated  linear operator
\begin{equation*}
\mf{C}\in\mathcal{L}(\mathbb{R}\times\mathbb{R}_{+}\times W^{2,p}_{0}(\Omega)), \qquad \mf{C}(\lambda,d,u):=(\lambda,d,\mathfrak{P}(\lambda,d)u).
\end{equation*}
Clearly, $\mf{C}$ is a topological isomorphisms with inverse
\begin{equation*}
\mf{C}^{-1}\in\mathcal{L}(\mathbb{R}\times\mathbb{R}_{+}\times W^{2,p}_{0}(\Omega)), \qquad \mf{C}^{-1}(\lambda,d,u)=(\lambda,d,\mathfrak{R}(\lambda,d)u),
\end{equation*}
and the operator $\mathfrak{N}:\mathbb{R}\times\mathbb{R}_{+}\times W^{2,p}_{0}(\Omega)\to L^{p}(\Omega)$ defined by
$$
  \mathfrak{N}(\l,d,u):=[\mf{R}(\l,d)\circ\mathfrak{F}\circ\mf{C}](\l,d,u)
$$
is given through
\begin{equation}
\label{iii.3}
\mathfrak{N}(\lambda,d,u)=d\Delta u+\left(1-\frac{\lambda^{2}|\mathfrak{a}|^{2}}{4d}\right)u+(\lambda-\zeta^{q-2}(\lambda,d,x)u^{q-2})
\zeta(\lambda,d,x)u^{2}.
\end{equation}
By construction,
\begin{equation*}
\mathfrak{F}^{-1}(0)=\mathfrak{C}(\mathfrak{N}^{-1}(0)).
\end{equation*}
Therefore, the zero sets $\mathfrak{F}^{-1}(0)$ and $\mathfrak{N}^{-1}(0)$ are related via a linear isomorphism. Moreover, since $\mathfrak{C}$ is a positive operator, it preserves the positive (resp. negative) cone of $W^{2,p}_{0}(\Omega)$. As a byproduct, the study of the positive (resp. negative) solutions  of \eqref{i.1} is equivalent to the study of the positive (resp. negative) solutions of the problem
\begin{equation}
\label{iii.4}
\left\{\begin{array}{ll}
-d\Delta u=\left(1-\frac{\lambda^{2}|\mathfrak{a}|^{2}}{4d}\right)u+(\lambda-\zeta^{q-2}(\lambda,d,x)u^{q-2})
\zeta(\lambda,d,x)u^{2}& \text{ in } \Omega, \\
u=0 & \text{ on } \partial\Omega.
\end{array}\right.
\end{equation}
Nevertheless, for some calculations it will be more appropriate using \eqref{i.1} than \eqref{iii.4}.
\par
The next result provides us with the structure of $\mathfrak{L}(\lambda,d)$.

\begin{lemma}
\label{le3.1} $\mathfrak{L}(\lambda,d)\in\Phi_{0}(W^{2,p}_{0}(\Omega),L^{p}(\Omega))$ for each  $(\lambda,d)\in\mathbb{R}\times\mathbb{R}_{+}$.
\end{lemma}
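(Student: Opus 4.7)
The plan is to write $\mathfrak{L}(\lambda,d)$ as the sum of an invertible principal part and a compact perturbation, then invoke the stability of the Fredholm index under compact perturbations.

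First I would split
\begin{equation*}
\mathfrak{L}(\lambda,d)u = d\Delta u + \mathfrak{T}(\lambda)u, \qquad
\mathfrak{T}(\lambda)u:=\lambda\langle\mathfrak{a},\nabla u\rangle + u,
\end{equation*}
and check that $d\Delta:W_0^{2,p}(\Omega)\to L^p(\Omega)$ is a topological isomorphism. Indeed, for $d>0$ the operator $-d\Delta$ under Dirichlet boundary conditions has principal eigenvalue $d\sigma_1>0$, so $0\notin\Sigma(-d\Delta)$; combined with the classical $L^p$-theory of Agmon--Douglis--Nirenberg / Calder\'on--Zygmund on the $\mathcal{C}^2$ domain $\Omega$ (see, e.g., \cite{GT} or \cite[Ch.~4]{LG13}), this yields that $-d\Delta$, and hence $d\Delta$, is a topological isomorphism from $W_0^{2,p}(\Omega)$ onto $L^p(\Omega)$. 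In particular $d\Delta\in\Phi_0(W_0^{2,p}(\Omega),L^p(\Omega))$.

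Next I would verify that $\mathfrak{T}(\lambda)\colon W_0^{2,p}(\Omega)\to L^p(\Omega)$ is compact. The operator $\mathfrak{T}(\lambda)$ factors as the bounded map $W_0^{2,p}(\Omega)\to W^{1,p}(\Omega)$, $u\mapsto \lambda\langle\mathfrak{a},\nabla u\rangle + u$, followed by the continuous inclusion $W^{1,p}(\Omega)\hookrightarrow L^p(\Omega)$. Since $\Omega$ is a bounded domain of class $\mathcal{C}^2$, the Rellich--Kondrachov theorem makes this last inclusion compact, and therefore $\mathfrak{T}(\lambda)$ is compact as a composition of a continuous and a compact operator.

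Finally I would conclude by the standard perturbation theorem: the set $\Phi_0(W_0^{2,p}(\Omega),L^p(\Omega))$ is stable under the addition of compact operators. Thus
\begin{equation*}
\mathfrak{L}(\lambda,d) = d\Delta + \mathfrak{T}(\lambda) \in \Phi_0(W_0^{2,p}(\Omega),L^p(\Omega))
\end{equation*}
for every $(\lambda,d)\in\mathbb{R}\times\mathbb{R}_+$. No step here is a genuine obstacle; the only care needed is to check that the domain regularity $\mathcal{C}^2$ is enough for both the $L^p$-isomorphism property of the Dirichlet Laplacian and the compactness of the Sobolev embedding, both of which are available in the references just cited.
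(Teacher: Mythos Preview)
Your argument is correct and follows the same overall strategy as the paper: exhibit $\mathfrak{L}(\lambda,d)$ as an isomorphism plus a compact perturbation, then invoke stability of $\Phi_0$ under compact perturbations. The only difference is in the choice of decomposition. You peel off the lower-order terms, writing $\mathfrak{L}(\lambda,d)=d\Delta+\mathfrak{T}(\lambda)$ with $d\Delta$ invertible and $\mathfrak{T}(\lambda)$ compact because it factors through the Rellich--Kondrachov embedding $W^{1,p}(\Omega)\hookrightarrow L^p(\Omega)$. The paper instead keeps the full elliptic operator intact and shifts it: for $\gamma$ large enough, $\mathfrak{L}(\lambda,d)-\gamma J\in GL(W_0^{2,p}(\Omega),L^p(\Omega))$ by the $L^p$-theory of uniformly elliptic operators, and then $\mathfrak{L}(\lambda,d)=(\mathfrak{L}(\lambda,d)-\gamma J)+\gamma J$ with $J:W_0^{2,p}(\Omega)\hookrightarrow L^p(\Omega)$ compact. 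Your route is slightly more explicit about which terms are responsible for the compactness; the paper's route has the mild advantage that it applies verbatim to any uniformly elliptic second-order operator without having to isolate a principal part that is already invertible. Either way the content is the same.
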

\begin{proof}
Since $-\mathfrak{L}(\lambda,d)$ is uniformly elliptic, there exists a constant $\o(\lambda,d)>0$ such that for every $\gamma\geq \o(\lambda,d)$ and $f\in L^{p}(\Omega)$, the equation
\begin{equation*}
	(-\mathfrak{L}(\lambda,d)+\gamma)v=
   -d\Delta v-\lambda \langle \mathfrak{a},\nabla v\rangle+ (\g-1) v = f
\end{equation*}
has a unique solution $v\in W^{2,p}_{0}(\Omega)$. In other words,
$$
   \mathfrak{L}(\lambda,d)-\gamma J\in GL(W^{2,p}_{0}(\Omega),L^{p}(\Omega)),
$$
where $J$ is the canonical embedding $J:W^{2,p}_{0}(\Omega)\hookrightarrow L^{p}(\Omega)$. Since $J$ is compact and
	\begin{equation*}
	\mathfrak{L}(\lambda,d)=(\mathfrak{L}(\lambda,d)-\gamma J)+\gamma J,
	\end{equation*}	
$\mathfrak{L}(\lambda,d)$ can be expressed as the sum of an isomorphism and a compact operator.
Therefore,  by \cite[Chap. XV, Th. 4.1]{GGS}, the operator $\mathfrak{L}(\lambda,d)$ is Fredholm of index zero.
\end{proof}

Throughout this paper, given a pair of real Banach spaces $(U,V)$ and an operator surface $\mathfrak{M}:\mathbb{R}\times\mathbb{R}_{+}\to\Phi_{0}(U,V)$, $\mf{M}\equiv\mf{M}(\l,d)$, we will denote by $\mf{M}_{d}$ the operator curve given by
\begin{equation*}
	\mf{M}_{d}:\mathbb{R}\to\Phi_{0}(U,V), \quad \mf{M}_{d}(\l):=\mf{M}(\l,d).
\end{equation*}
By $\Sigma_{+}(\mathfrak{M})$ we will denote the subset of $\Sigma(\mathfrak{M})$ consisting of the generalized eigenvalues associated to a positive eigenfunction. The next  result provides us with the structure of $\Sigma_{+}(\mathfrak{L})$, where $\mathfrak{L}(\l,d)$ is the surface defined in \eqref{iii.2}, and shows that $\l_0\in \Sigma_{+}(\mathfrak{L})$ if
$(\l_0,0)$ is a bifurcation point to positive solutions of \eqref{i.1}.

\begin{theorem}
\label{th3.2}
$\Sigma_+(\mathfrak{L})$ is given by
	\begin{equation}
	\label{iii.5}
	\Sigma_{+}(\mathfrak{L})=\{(\l,d)\in\mathbb{R}\times\mathbb{R}_{+}:\; 4d(1-\s_{1}d)=\l^{2}|\mathfrak{a}|^{2}\}
	\end{equation}
(see Figure \ref{F4}). Moreover, the following assertions are satisfied:
\begin{enumerate}
\item[{\rm (i)}] For every $d<\s_{1}^{-1}$, $\Sigma_{+}(\mathfrak{L}_{d})=\{-\lambda_{1}(d),\lambda_{1}(d)\}$, where
\begin{equation*}
		\lambda_{1}(d)=\frac{2}{|\mathfrak{a}|}\sqrt{d(1-d\s_{1})}.
\end{equation*}
Moreover, $\chi[\mathfrak{L}_{d},\pm\lambda_{1}(d)]=1$.
		
\item[{\rm (ii)}] Suppose $d=\s_{1}^{-1}$. Then, $\Sigma_+(\mathfrak{L}_{d})=\{0\}$
and $\chi[\mathfrak{L}_{d},0]=2$.
		
\item[{\rm (iii)}] $\Sigma(\mathfrak{L}_{d})=\emptyset$ if $d>\s_{1}^{-1}$.

\item[{\rm (iv)}] $\l_0\in \Sigma_{+}(\mathfrak{L})$ if
$(\l_0,0)$ is a bifurcation point to positive solutions of \eqref{i.1}.
\end{enumerate}
\end{theorem}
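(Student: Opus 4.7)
The plan is to reduce the whole analysis to a self-adjoint pencil through the change of variables $u=\zeta v$ already introduced in Section 3. Using $\nabla\zeta=-\tfrac{\lambda}{2d}\mathfrak{a}\zeta$ and $\Delta\zeta=\tfrac{\lambda^{2}|\mathfrak{a}|^{2}}{4d^{2}}\zeta$, a direct expansion yields $\mathfrak{L}(\lambda,d)(\zeta v)=\zeta\bigl[d\Delta v+\bigl(1-\tfrac{\lambda^{2}|\mathfrak{a}|^{2}}{4d}\bigr)v\bigr]$, so $\mathfrak{L}_{d}$ is conjugate, through the positivity-preserving isomorphism-valued curves $\mathfrak{P}(\cdot,d)$ and $\mathfrak{R}(\cdot,d)$, to the self-adjoint pencil
\[
\mathfrak{M}_{d}(\lambda):=\mathfrak{R}(\lambda,d)\circ\mathfrak{L}_{d}(\lambda)\circ\mathfrak{P}(\lambda,d)=d\Delta+\Bigl(1-\tfrac{\lambda^{2}|\mathfrak{a}|^{2}}{4d}\Bigr)I.
\]
Since $\mathfrak{P}$ and $\mathfrak{R}$ are strictly positive, the positive generalized spectrum is preserved; and since $\chi$ is invariant under conjugation by smooth curves of isomorphisms (by the product formula (PF) of Theorem \ref{th24} together with its cross-space extension in \cite{LG01}, noting that both $\mathfrak{P}(\cdot,d)$ and $\mathfrak{R}(\cdot,d)$ have $\chi=0$), it suffices to prove everything with $\mathfrak{L}_{d}$ replaced by $\mathfrak{M}_{d}$.

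For \eqref{iii.5} and items (i)--(iii), solving $\mathfrak{M}_{d}(\lambda)v=0$ with $v>0$ is a principal Dirichlet eigenvalue problem, solvable iff $\tfrac{1}{d}\bigl(1-\tfrac{\lambda^{2}|\mathfrak{a}|^{2}}{4d}\bigr)=\sigma_{1}$, i.e.\ $\lambda^{2}|\mathfrak{a}|^{2}=4d(1-d\sigma_{1})$, in which case $N[\mathfrak{M}_{d}(\lambda)]=\mathrm{span}[\varphi_{0}]$. Discussing the sign of $1-d\sigma_{1}$ produces the two crossings $\pm\lambda_{1}(d)$ when $d<\sigma_{1}^{-1}$, the single crossing $\lambda=0$ when $d=\sigma_{1}^{-1}$, and no real crossing when $d>\sigma_{1}^{-1}$, which are exactly the eigenvalue statements of (i), (ii) and (iii). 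To sharpen (iii) into $\Sigma(\mathfrak{L}_{d})=\emptyset$, I would repeat the computation with $\sigma_{1}$ replaced by any Dirichlet eigenvalue $\sigma_{k}$ and note that $\tfrac{1}{d}\bigl(1-\tfrac{\lambda^{2}|\mathfrak{a}|^{2}}{4d}\bigr)\leq\tfrac{1}{d}<\sigma_{1}\leq\sigma_{k}$ for every $\lambda\in\mathbb{R}$ and every $k\geq 1$ once $d>\sigma_{1}^{-1}$, so $\mathfrak{M}_{d}(\lambda)\in GL(W^{2,p}_{0}(\Omega),L^{p}(\Omega))$ for all real $\lambda$.

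The multiplicity computation is then an elementary transversality check in the sense of Definition \ref{de2.3}, because all derivatives of $\mathfrak{M}_{d}$ are scalar operators: $\mathfrak{M}_{d,1}=-\tfrac{\lambda_{0}|\mathfrak{a}|^{2}}{2d}I$, $\mathfrak{M}_{d,2}=-\tfrac{|\mathfrak{a}|^{2}}{4d}I$, and $\mathfrak{M}_{d,j}=0$ for $j\geq 3$. At $\lambda_{0}\in\{\pm\lambda_{1}(d)\}$ with $d<\sigma_{1}^{-1}$, the vector $\mathfrak{M}_{d,1}\varphi_{0}$ is a non-zero multiple of $\varphi_{0}$, and self-adjointness forces $\varphi_{0}\notin R[\mathfrak{M}_{d,0}]=\varphi_{0}^{\perp}$; hence the crossing is $1$-transversal and $\chi=\dim N[\mathfrak{M}_{d,0}]=1$, proving (i). At the degenerate point $\lambda_{0}=0$, $d=\sigma_{1}^{-1}$, the first-order term $\mathfrak{M}_{d,1}$ vanishes identically, but $\mathfrak{M}_{d,2}\varphi_{0}=-\tfrac{|\mathfrak{a}|^{2}}{4d}\varphi_{0}\notin R[\mathfrak{M}_{d,0}]$, so the path is $2$-transversal and formula \eqref{ii.3} delivers $\chi=1\cdot 0+2\cdot 1=2$, which is (ii).

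For (iv), I would run the classical necessary condition for bifurcation: given positive solutions $u_{n}\to 0$ in $W^{2,p}_{0}(\Omega)$ along $\lambda_{n}\to\lambda_{0}$, rewrite the equation as $\mathfrak{L}(\lambda_{n},d)u_{n}=-\lambda_{n}u_{n}^{2}+u_{n}^{q}$, normalize $w_{n}:=u_{n}/\|u_{n}\|_{W^{2,p}}$, and use the compactness of the embedding $W^{2,p}_{0}(\Omega)\hookrightarrow W^{1,p}_{0}(\Omega)$ together with elliptic estimates for $d\Delta$ to pass to a subsequential limit $w\in W^{2,p}_{0}(\Omega)$ with $\|w\|_{W^{2,p}}=1$, $w\geq 0$ and $\mathfrak{L}(\lambda_{0},d)w=0$; Theorem \ref{th2.4} then upgrades $w\geq 0$ to $w\gg 0$, placing $\lambda_{0}\in\Sigma_{+}(\mathfrak{L}_{d})$. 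The main obstacle I anticipate is precisely part (ii): in the original $u$-variable $\mathfrak{L}_{d,j}=0$ for $j\geq 2$ and, since an integration by parts gives $\int_{\Omega}\mathfrak{a}\cdot\nabla\varphi_{0}\,\varphi_{0}\,dx=0$, one has $\mathfrak{L}_{d,1}\varphi_{0}\in R[\mathfrak{L}_{d,0}]$, so the transversality tests of \eqref{ii.3} all collapse and the true multiplicity is invisible. The conjugation to $\mathfrak{M}_{d}$ plays precisely the role of a transversalizing isomorphism and uncovers the even multiplicity $\chi=2$ that, by \cite[Th.~5.6.2]{LG01}, prevents a change of local topological index at $u=0$ and is ultimately responsible for the loops sketched in Figure \ref{F2}.
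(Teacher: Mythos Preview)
Your proof is correct and follows essentially the same route as the paper: conjugate $\mathfrak{L}_{d}$ via the positive isomorphisms $\mathfrak{P},\mathfrak{R}$ to the self-adjoint pencil $\mathfrak{M}_{d}=d\Delta+\bigl(1-\tfrac{\lambda^{2}|\mathfrak{a}|^{2}}{4d}\bigr)I$, read off $\Sigma_{+}$ from the principal-eigenvalue equation, check $1$- and $2$-transversality on the derivatives of $\mathfrak{M}_{d}$, and for (iv) normalize a bifurcating sequence and pass to the limit by compactness. Your treatment of (iii) is in fact slightly more thorough than the paper's (you rule out \emph{all} Dirichlet eigenvalues, not just $\sigma_{1}$), and your closing remark that in the original variables $\mathfrak{L}_{d,1}\varphi_{0}\in R[\mathfrak{L}_{d,0}]$ while $\mathfrak{L}_{d,j}=0$ for $j\geq 2$---so the conjugation is genuinely a transversalizing isomorphism in the sense of \eqref{ii.5}---is a nice addition; the only cosmetic slip is that the upgrade $w\gneq 0\Rightarrow w\gg 0$ in (iv) should cite the linear Hopf/Krein--Rutman argument (the remark after Theorem~\ref{th2.4}) rather than Theorem~\ref{th2.4} itself, which is stated for the nonlinear problem.
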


\begin{comment}
\begin{proposition}
	\label{p}
	The following assertions hold:
	\begin{enumerate}
		\item If $\sigma_{1}(-\Delta,\Omega)>1$ then $\Sigma(\mathfrak{L})=\emptyset$.
		\item If $\sigma_{1}(-\Delta,\Omega)=1$ then $\Sigma(\mathfrak{L})=\{0\}$ with algebraic multiplicity $\chi[\mathfrak{L},0]=2$.
		\item If $\sigma_{1}(-\Delta,\Omega)<1$ then $$\pm\lambda_{1}:=\pm\frac{2}{|\mathfrak{a}|}\sqrt{1-\sigma_{1}(-\Delta,\Omega)}\in\Sigma(\mathfrak{L})$$
		with algebraic multiplicity $\chi_{2}[\mathfrak{L},\pm\lambda_{1}]=1$.
	\end{enumerate}
\end{proposition}
\end{comment}

\begin{proof}
Consider the operator surfaces $\mathfrak{P}$ and $\mf{R}$ defined above. Since $\mathfrak{P}(\lambda,d)$ and $\mf{R}(\l,d)$ are topological isomorphisms  for each $(\lambda,d)\in\mathbb{R}\times\mathbb{R}_{+}$, they are in particular Fredholm of index zero, i.e.,
\begin{equation*}
\mathfrak{P}(\lambda,d)\in GL(W^{2,p}_{0}(\Omega))\subset\Phi_{0}(W^{2,p}_{0}(\Omega)), \quad \mf{R}(\l,d)\in GL(L^{p}(\Omega))\subset\Phi_{0}(L^{p}(\Omega)).
\end{equation*}
As the composition of an isomorphism and a Fredholm operator of index zero is again a Fredholm operator of index zero, it follows that
$$
  \mf{R}(\l,d)\circ\mathfrak{L}(\lambda,d)\circ\mathfrak{P}(\lambda,d)
  \in\Phi_{0}(W^{2,p}_{0}(\Omega),L^{p}(\O))
$$
for all $(\lambda,d)\in\mathbb{R}\times\mathbb{R}_{+}$. It is easily seen that
\begin{equation}
\label{iii.6}
[\mf{R}(\l,d)\circ\mathfrak{L}(\lambda,d)\circ\mathfrak{P}(\lambda,d)]u=d\left[\Delta u+\tfrac{1}{d}\left(1-\tfrac{\lambda^{2}|\mathfrak{a}|^{2}}{4d}\right)u\right].
\end{equation}
Thus, since $\mathfrak{P}(\lambda,d)\in GL(W^{2,p}_{0}(\Omega))$ and $\mf{R}(\l,d)\in GL(L^{p}(\O))$, it is apparent that
\begin{equation*}
\Sigma_{+}(\mathfrak{L})=\Sigma_{+}(\mf{R}\circ\mathfrak{L}\circ\mathfrak{P}).
\end{equation*}
Hence, it suffices to find out $\Sigma_{+}(\mf{R}\circ\mathfrak{L}\circ\mathfrak{P})$. From \eqref{iii.6}, it is apparent that $(\lambda,d)\in\Sigma_{+}(\mf{R}\circ\mathfrak{L}\circ\mathfrak{P})$ if and only if
\begin{equation}
\label{iii.7}
\tfrac{1}{d}\left(1-\tfrac{\lambda^{2}|\mathfrak{a}|^{2}}{4d}\right)=\sigma_{1},
\end{equation}
because $\s_1$ is the unique eigenvalue of $-\D$ associated with it there is a positive eigenfunction.
\eqref{iii.5} is a direct consequence of \eqref{iii.7}.
\par
It remains to show the assertions (i)--(iv). Suppose $d<\s_{1}^{-1}$. Then, by \eqref{iii.5}, it is obvious that $\Sigma_{+}(\mathfrak{L}_{d})=\{-\lambda_{1}(d),\lambda_{1}(d)\}$. To compute
$\chi[\mathfrak{L}_{d},\pm \lambda_{1}(d)]$, we use the product formula. According to it,
\begin{equation*}
\chi[\mf{R}_{d}\circ\mathfrak{L}_{d}\circ\mathfrak{P}_{d},\pm\lambda_{1}(d)]
=\chi[\mf{R}_{d},\pm\l_{1}(d)]+\chi[\mathfrak{L}_{d},\pm\lambda_{1}(d)]+\chi[\mathfrak{P}_{d},\pm\lambda_{1}(d)].
\end{equation*}
On the other hand, since $\mathfrak{P}(\lambda,d)\in GL(W^{2,p}_{0}(\Omega))$ and $\mf{R}(\l,d)\in GL(L^{p}(\O))$ for all $(\lambda,d)\in\mathbb{R}\times\mathbb{R}_{+}$, necessarily
$$
  \chi[\mathfrak{P}_{d},\pm\lambda_{1}(d)]=0, \quad \chi[\mf{R}_{d},\pm\l_{1}(d)]=0.
$$
Hence,
$$
   \chi[\mathfrak{L}_{d},\pm\lambda_{1}(d)]=
\chi[\mf{R}_{d}\circ\mathfrak{L}_{d}\circ\mathfrak{P}_{d},\pm\lambda_{1}(d)].
$$
To find out $\chi[\mf{R}_{d}\circ\mathfrak{L}_{d}\circ\mathfrak{P}_{d},\pm\lambda_{1}(d)]$, we denote $\mathfrak{S}:=\mf{R}\circ\mathfrak{L}\circ\mathfrak{P}$. Let $\varphi_{0}\in W^{2,p}_{0}(\Omega)$ be a principal eigenfunction associated with $\s_{1}$. Then, by a direct computation, we find that
\begin{equation*}
N[\mathfrak{S}_{d}(\pm \lambda_{1}(d))]=\mathrm{span\,}[\varphi_{0}], \quad R[\mathfrak{S}_{d}(\pm\lambda_{1}(d))]=\{f\in L^{p}(\Omega): \int_{\Omega} f\varphi_{0} \ dx=0\}.
\end{equation*}
Moreover, differentiating with respect to $\lambda$, yields
\begin{equation*}
\frac{d\mathfrak{S}_{d}}{d\l}(\pm \lambda_{1}(d))u=\mp\frac{\lambda_{1}(d)|\mathfrak{a}|^{2}}{2d}u,
 \qquad u\in W^{2,p}_{0}(\Omega).
\end{equation*}
Thus, the transversality condition
\begin{equation*}
\frac{d\mathfrak{S}_{d}}{d\l}(\pm\lambda_{1}(d))\left( N[\mathfrak{S}_{d}(\pm\lambda_{1}(d))]\right) \oplus R[\mathfrak{S}_{d}(\pm\lambda_{1}(d))]=L^{p}(\Omega)
\end{equation*}
holds. This entails $\pm\lambda_{1}(d)$ to be $1$-transversal eigenvalues. Therefore,
\begin{equation*}
\chi[\mathfrak{L}_{d},\pm\lambda_{1}(d)]=\chi[\mathfrak{S}_{d},\pm\lambda_{1}(d)]=\dim N [\mathfrak{S}_{d}(\pm\lambda_{1}(d))]=1,
\end{equation*}
which ends the proof of Part (i).
\par
Now, suppose $d=\s_{1}^{-1}$. Then, by  \eqref{iii.7}, $\Sigma(\mathfrak{L}_{d})=\{0\}$. Again,
\begin{equation*}
N[\mathfrak{S}_{d}(0)]=\mathrm{span}[\varphi_{0}], \quad R[\mathfrak{S}_{d}(0)]=\{f\in L^{p}(\Omega): \int_{\Omega} f\varphi_{0} \ dx=0\}.
\end{equation*}
However, differentiating with respect to $\lambda$, on this occasion, we find that
\begin{equation*}
\frac{d\mathfrak{S}_{d}}{d\l}(0)u=0, \quad \frac{1}{2!}\frac{d^{2}\mathfrak{S}_{d}}{d\l^{2}}(0)u=-\frac{ |\mathfrak{a}|^{2}}{4d}u, \qquad u\in W^{2,p}_{0}(\Omega).
\end{equation*}
Thus, setting
\begin{equation*}
\mathfrak{S}_{d,0}\equiv\mathfrak{S}_{d}(0), \qquad \mathfrak{S}_{d,1}\equiv \frac{d\mathfrak{S}_{d}}{d\l}(0), \qquad \mathfrak{S}_{d,2}\equiv\frac{1}{2!}\frac{d^{2}\mathfrak{S}_{d}}{d\l^{2}}(0),
\end{equation*}
the following transversality condition holds
\begin{equation*}
\mathfrak{S}_{d,2}(N[\mathfrak{S}_{d,0}]\cap N[\mathfrak{S}_{d,1}])\oplus \mathfrak{S}_{d,1}(N[\mathfrak{S}_{d,0}])\oplus R[\mathfrak{S}_{d,0}]=L^{p}(\Omega),
\end{equation*}
because
$$
  N[\mathfrak{S}_{d,0}]\cap N[\mathfrak{S}_{d,1}]=\mathrm{span}[\varphi_{0}].
$$
Consequently, $\lambda=0$ is a $2$-transversal eigenvalue of $\mathfrak{S}_{d}$ and, due to
\eqref{ii.3},
\begin{equation*}
\chi[\mathfrak{S}_{d},0]=2 \dim \mathfrak{S}_{d, 2}(N[\mathfrak{S}_{d, 0}]\cap N[\mathfrak{S}_{d, 1}])+\dim \mathfrak{S}_{d, 1}(N[\mathfrak{S}_{d, 0}])=2.
\end{equation*}
Hence, $\chi[\mathfrak{L}_{d},0]=\chi[\mathfrak{S}_{d},0]=2$ as stated. This proves Part (ii).
Part (iii) follows directly from \eqref{iii.7}.
\par
To show Part (iv), let $\{(\l_n,u_n)\}_{n\in\N}$ be a sequence of positive solutions of \eqref{iii.4} such that $$
   \lim_{n\to\infty}\l_n=\l_0\quad \hbox{and}\quad \lim_{n\to\infty} u_n=0\quad \hbox{in}\;\; \mc{C}(\bar\O).
$$
Then, setting $\psi_n:=\frac{u_n}{\|u_n\|_\infty}$, $n\geq 1$, we have that, for every $n\geq 1$,
\begin{equation}
\label{iii.8}
- d \psi_n = (-\D)^{-1}\left[ \left(1-\tfrac{\lambda_n^{2}|\mathfrak{a}|^{2}}{4d}\right)\psi_n +(\lambda_n-\zeta^{q-2}(\lambda_n,d,x)u_n^{q-2})
\zeta(\lambda_n,d,x)u_n \psi_n\right]
\end{equation}
in $\O$. By a standard compactness argument, a sequence of $\psi_n$ must approximate  some $\psi_0>0$, which is an eigenfunction associated to $\l_0$. Therefore, $\l_0\in \Sigma_+(\mathfrak{L}_d)$.
\end{proof}

By a rather simple manipulation, it is easily seen that $\Sigma_{+}(\mf{L})$
is the ellipse
\begin{equation*}
\tfrac{\l^{2}}{\alpha^2}+\tfrac{(d-\gamma)^{2}}{\beta^2}=1, \qquad \alpha:=\tfrac{1}{|\mf{a}|\sqrt{\s_{1}}}, \quad \beta:=\tfrac{1}{2\s_{1}}, \quad \gamma:=\tfrac{1}{2\s_{1}},
\end{equation*}
which has been plotted in Figure \ref{F4}.

\begin{center}
	\begin{figure}[h!]

		\tikzset{every picture/.style={line width=0.75pt}} %set default line width to 0.75pt
		
		\begin{tikzpicture}[x=0.75pt,y=0.75pt,yscale=-1,xscale=1]
		%uncomment if require: \path (0,300); %set diagram left start at 0, and has height of 300
		
		%Straight Lines [id:da2731322672167945]
		\draw    (182,230) -- (416,231) ;
		%Straight Lines [id:da3490864226160214]
		\draw    (299,47) -- (299,230) ;
		%Straight Lines [id:da5615281269451109]
		\draw  [dash pattern={on 0.84pt off 2.51pt}]  (416,48) -- (416,231) ;
		%Straight Lines [id:da4357600538796341]
		\draw  [dash pattern={on 0.84pt off 2.51pt}]  (182,47) -- (182,230) ;
		%Shape: Ellipse [id:dp426509423734594]
		\draw  [color={rgb, 255:red, 208; green, 2; blue, 27 }  ,draw opacity=1 ][line width=2.25]  (182.25,177) .. controls (182.25,147.73) and (234.52,124) .. (299,124) .. controls (363.48,124) and (415.75,147.73) .. (415.75,177) .. controls (415.75,206.27) and (363.48,230) .. (299,230) .. controls (234.52,230) and (182.25,206.27) .. (182.25,177) -- cycle ;
		%Shape: Circle [id:dp7040513027942104]
		\draw  [fill={rgb, 255:red, 0; green, 0; blue, 0 }  ,fill opacity=1 ] (296.5,179.5) .. controls (296.5,178.12) and (297.62,177) .. (299,177) .. controls (300.38,177) and (301.5,178.12) .. (301.5,179.5) .. controls (301.5,180.88) and (300.38,182) .. (299,182) .. controls (297.62,182) and (296.5,180.88) .. (296.5,179.5) -- cycle ;
		%Straight Lines [id:da29478171445097323]
		\draw    (182,123.5) -- (416,124.5) ;
		
		% Text Node
		\draw (152,233) node [anchor=north west][inner sep=0.75pt]    {$-\frac{1}{|\mathfrak{a} |\sqrt{\sigma _{1}}}$};
		% Text Node
		\draw (308,38) node [anchor=north west][inner sep=0.75pt]    {$d$};
		% Text Node
		\draw (433,223) node [anchor=north west][inner sep=0.75pt]    {$\lambda $};
		% Text Node
		\draw (308,160) node [anchor=north west][inner sep=0.75pt]    {$\frac{1}{2\sigma _{1}}$};
		% Text Node
		\draw (387,233) node [anchor=north west][inner sep=0.75pt]    {$\frac{1}{|\mathfrak{a} |\sqrt{\sigma _{1}}}$};
		% Text Node
		\draw (425,112) node [anchor=north west][inner sep=0.75pt]    {$\sigma _{1}^{-1}$};

		\end{tikzpicture}
	\caption{The spectrum $\Sigma_+(\mathfrak{L}(\l,d))$}
	\label{F4}
	\end{figure}
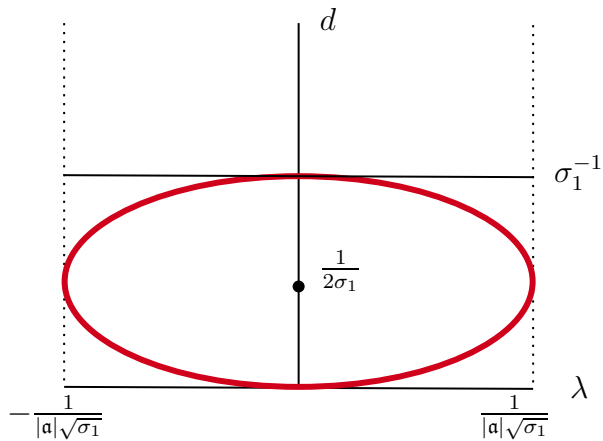
\end{center}

\section{Local structure of the Solution Set when $d\leq \s_1^{-1}$}
\noindent In this section, we study the local structure of the solution set $\mathfrak{F}^{-1}(0)$ in a neighborhood
$$
  (\l,d,u)=(\pm\lambda_{1}(d),d,0)\in\mathbb{R}\times\mathbb{R}_+\times W^{2,p}_{0}(\Omega)
$$
when $d\leq \s_{1}^{-1}$, where we are setting $\l_1(\s_1^{-1})\equiv 0$. This section is divided into two subsections to treat, separately, the cases when $d<\s_{1}^{-1}$ and $d= \s_{1}^{-1}$.
\par
\subsection{The regular case when $d<\s_{1}^{-1}$} Since $\pm\lambda_{1}(d)$ are simple eigenvalues of the curve  $\mathfrak{L}_{d}(\lambda)$ with  $\chi[\mathfrak{L}_{d},\pm\lambda_{1}(d)]=1$, the theorem of
Crandall  and Rabinowitz \cite{CR} provides us with the local estructure of $\mathfrak{F}^{-1}(0)$. Indeed,
setting
\begin{equation*}
Y:=\{f\in L^{p}(\Omega): \int_{\Omega}f\varphi_{0}\ dx=0\},
\end{equation*}
where $\varphi_{0}$ is a principal eigenfunction associated to $\s_1$, and
\begin{equation}
\label{eq4.1}
\mathfrak{F}_{d}(\lambda, u):=d\Delta u+\l \langle \mathfrak{a}, \nabla u\rangle+ u+\l u^{2}- u^{q}, \quad (\lambda, u)\in\mathbb{R}\times W^{2,p}_{0}(\Omega),
\end{equation}
the next result is a direct consequence of \cite{CR}.

\begin{theorem}
	There exist $\varepsilon>0$ and two analytic maps
	\begin{equation*}
	\lambda_{\pm}:(-\varepsilon,\varepsilon)\to \mathbb{R}, \qquad U_{\pm}:(-\varepsilon,\varepsilon)\to Y,
	\end{equation*}
such that  $\lambda_{\pm}(0)=\pm\lambda_{1}(d)$, $U_{\pm}(0)=0$, and, for every $s\in(-\varepsilon,\varepsilon)$,
	\begin{equation*}
	\mathfrak{F}_{d}(\lambda_{\pm}(s),u_{\pm}(s))=0, \quad u_{\pm}(s):=s(\varphi_{0}+U_{\pm}(s)).
	\end{equation*}
	Moreover, there exists $\rho>0$ such that, whenever $\mathfrak{F}_{d}(\lambda,u)=0$ with $(\lambda,u)\in B_{\rho}(\pm\lambda_{1}(d),0)$, either $u=0$,  or $(\lambda,u)=(\lambda_{\pm}(s),u_{\pm}(s))$ for some $s\in(-\varepsilon,\varepsilon)$.
\end{theorem}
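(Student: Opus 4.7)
The plan is to apply the Crandall--Rabinowitz theorem on bifurcation from a simple eigenvalue \cite{CR} directly to the operator $\mathfrak{F}_d$. All the ingredients have already been assembled in the preceding sections, so the proof reduces to an identification of hypotheses.

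First, since $\mathfrak{F}_d(\lambda,u)$ is polynomial in $(\lambda,u)\in\R\times W^{2,p}_0(\Omega)$, it is real analytic. Its partial linearization at $u=0$ is $\mathfrak{L}_d(\lambda)$, which belongs to $\Phi_0(W^{2,p}_0(\Omega),L^p(\Omega))$ for every $\lambda\in\R$ by Lemma \ref{le3.1}. By Theorem \ref{th3.2}(i), $\pm\lambda_1(d)\in\Alg_1(\mathfrak{L}_d)$ with $\chi[\mathfrak{L}_d,\pm\lambda_1(d)]=1$. As noted around \eqref{ii.4}, a $1$-transversal algebraic eigenvalue with $\chi=1$ is exactly a simple eigenvalue in the sense of Crandall--Rabinowitz, i.e., $\dim N[\mathfrak{L}_d(\pm\lambda_1(d))]=\codim R[\mathfrak{L}_d(\pm\lambda_1(d))]=1$ together with the transversality
\begin{equation*}
  \partial_\lambda \mathfrak{L}_d(\pm\lambda_1(d))\varphi_0\notin R[\mathfrak{L}_d(\pm\lambda_1(d))],
\end{equation*}
where here $\varphi_0$ denotes the generator of $N[\mathfrak{L}_d(\pm\lambda_1(d))]$. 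Concretely, through the conjugation \eqref{iii.6}, this kernel is spanned by $e^{\mp\frac{\lambda_1(d)}{2d}\langle\mathfrak{a},x\rangle}$ times the principal eigenfunction of $-\Delta$; the transversality itself was already verified, after conjugation, in the proof of Theorem \ref{th3.2}(i), and is preserved by the surrounding isomorphisms $\mathfrak{P}(\pm\lambda_1(d),d)$ and $\mathfrak{R}(\pm\lambda_1(d),d)$.

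Next, Crandall--Rabinowitz provides a smooth bifurcated branch through a Lyapunov--Schmidt decomposition; because $\mathfrak{F}_d$ is analytic, the reduction can be carried out with the analytic implicit function theorem, yielding analytic $\lambda_\pm$ and $U_\pm$. After choosing a closed complement of $N[\mathfrak{L}_d(\pm\lambda_1(d))]$ inside $W^{2,p}_0(\Omega)$—for instance $Y\cap W^{2,p}_0(\Omega)$, where $Y$ is the codimension-one subspace introduced before the statement—one obtains, for some $\varepsilon>0$, analytic maps $\lambda_\pm:(-\varepsilon,\varepsilon)\to\R$ and $U_\pm:(-\varepsilon,\varepsilon)\to Y$ with $\lambda_\pm(0)=\pm\lambda_1(d)$, $U_\pm(0)=0$, and $\mathfrak{F}_d(\lambda_\pm(s),s(\varphi_0+U_\pm(s)))=0$ for every $s\in(-\varepsilon,\varepsilon)$.

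Finally, the asserted local uniqueness is an immediate by-product of the same Lyapunov--Schmidt reduction: after shrinking $\rho>0$, projecting any zero $(\lambda,u)$ of $\mathfrak{F}_d$ in $B_\rho(\pm\lambda_1(d),0)$ onto $N[\mathfrak{L}_d(\pm\lambda_1(d))]$ and onto its complement delivers a finite-dimensional bifurcation equation whose only solutions near the bifurcation point are the trivial branch $u=0$ and the branch parametrized above. The main (indeed only) conceptual point of the proof is the recognition that $1$-transversality with $\chi=1$ from the Esquinas--L\'opez-G\'omez theory is the same as Crandall--Rabinowitz simplicity; once this is settled, no further analytic difficulty remains, since the spectral information of Section 3 supplies every hypothesis required by \cite{CR}.
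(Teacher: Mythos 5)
Your proposal is correct and follows essentially the same route as the paper, which simply invokes the Crandall--Rabinowitz theorem on the strength of Theorem \ref{th3.2}(i) (algebraic simplicity $\chi[\mathfrak{L}_d,\pm\lambda_1(d)]=1$, equivalent to the transversality condition \eqref{ii.4}) together with the analyticity of $\mathfrak{F}_d$; your additional remarks on the conjugated kernel and the choice of complement are consistent with that argument.
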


Consequently, for every $d<\s_{1}^{-1}$, the set $\mf{F}^{-1}(0)\setminus\{(\l,0)\}$, $\l\sim \pm\l_1(d)$,  consists of two analytic curves
$$
   (\lambda(s),u(s))=(\pm\lambda_{1}(d)+O(1), s\varphi_{0}+O(s)) \quad \text{as } s\to 0
$$
bifurcating from $(\pm\lambda_{1}(d),d,0)\in\mathbb{R}\times W^{2,p}_{0}(\Omega)$, respectively. These solutions are positive if $s>0$, and negative if $s<0$.
\par
Actually, by applying the implicit function theorem to the the operator
$$
  \mathscr{G}(\l,d,y,s):=\left\{ \begin{array}{ll} s^{-1}\mathfrak{F}(\l,d,s(\v_0+y)) & \quad
  \hbox{if}\;\; s\neq 0, \\ \mathfrak{L}(\l,d)(\v_0+y) & \quad \hbox{if}\;\; s=0, \end{array}\right.
$$
at $(\l,d,y,s)=(\pm\l_1(d),d,0,0)$, it becomes apparent that $\mathfrak{F}^{-1}(0)$ consists
of two analytic bi-dimensional manifolds bifurcating from $u=0$ along the curves $\l=\pm\l_1(d)$, $d>0$, $d\sim 0$.

\subsection{The degenerate case when $d=\s_{1}^{-1}$} This case is far more sophisticated, since $\chi[\mathfrak{L}_{d},0]=2$ and hence the theorem of Crandall and Rabinowitz \cite{CR} cannot be applied.
Instead, to ascertain the local structure of $\mathfrak{F}^{-1}(0)$ in this case, we will use some abstract
results  on analytic bifurcation theory, going back to \cite{JJ4} and the references there in,  valid for two arbitrary real Banach spaces, $U$ and $V$, with $U\subset V$,  and any analytic operator $\mathfrak{F}\in\mathcal{H}(\mathbb{R}\times U,V)$ satisfying the following assumptions:
\begin{enumerate}
	\item[(F1)] $\mathfrak{F}(\lambda,0)=0$ for all $\lambda\in\mathbb{R}$.
	\item[(F2)] $D_{u}\mathfrak{F}(\lambda,0)\in\Phi_{0}(U,V)$ for all $\lambda\in\mathbb{R}$.
	\item[(F3)] $N[D_{u}\mathfrak{F}(\lambda_{0},0)]=\mathrm{span}[\varphi_{0}]$ for some $\varphi_0\in U\backslash\{0\}$.
\end{enumerate}
As usual, we denote $ \mathfrak{L}(\lambda):=D_{u}\mathfrak{F}(\lambda,0)$ for all $\lambda\in\mathbb{R}$.
As we are assuming  $\mathfrak{F}$ to be analytic, we can expand it in the form
\begin{equation*}
\mathfrak{F}(\lambda,u)=\mathfrak{L}(\l)u+\sum_{j\geq 0, \; k\geq 2}\lambda^{j}F_{j,k}(u),
\end{equation*}
where $F_{j,k}:U\to V$ are homogeneous polynomials of degree $k\geq 2$ with symmetric polar forms
$$
 F^{(k)}_{j}:U^{k}\to V, \qquad F_{j,k}(u)=F^{(k)}_{j}(u,\cdots,u).
$$
See \cite[Chap. 26]{HP} for the definition of this concepts. Throughout this section, $\langle \cdot,\cdot\rangle:V\times V'\to\mathbb{R}$ denotes the duality pairing between $V$ and its topological dual space $V'$. By (F3), we can normalize $\langle \varphi_{0},\varphi_{0}^{\ast}\rangle=1$, where $\varphi_{0}^{\ast}$ spans the null space of the
adjoint operator $\mathfrak{L}^{\ast}_{0}:V'\to U'$. Subsequently, we consider the projection operators
\begin{align*}
P: & \; V\to N[\mathfrak{L}_{0}], \quad P(v):=\langle v,\varphi_{0}^{\ast}\rangle \varphi_{0},\\
Q: & \; V\to R[\mathfrak{L}_0], \quad Q(v):=v-P(v),
\end{align*}
and identify $\mathbb{R}\times N[\mathfrak{L}_{0}]$ with $\mathbb{R}^{2}$ via the isomorphism $T(\lambda,x\varphi_{0})=(\lambda,x)$. By a standard Lyapunov--Schmidt reduction (see \cite{JJ3} and \cite[Ch. 3]{LG01}, if necessary),  there exist an open neighborhood $\mathcal{U}$ of $(\lambda_{0},0)$ in $\mathbb{R}^{2}$, an analytic map $\mathcal{Y}:T^{-1}(\mathcal{U})\to V$, and a finite dimensional operator
\begin{equation*}
	\mathfrak{G}: \mathcal{U}\subset\mathbb{R}^{2} \to \mathbb{R}, \qquad \mf{G}(\lambda,x):=(I_{V}-Q)\mathfrak{F}(\lambda,x\varphi_{0}+\mathcal{Y}(\lambda,x\varphi_{0})),
\end{equation*}
such that $(\lambda,x)\in \mathcal{U}$ satisfies $\mathfrak{G}(\lambda,x)=0$ if and only if $(\lambda,u)=(\lambda,x\varphi_{0}+\mathcal{Y}(\lambda,x\varphi_{0}))$ satisfies $\mathfrak{F}(\lambda,u)=0$. Actually, there exists an open neighborhood $\mathcal{V}$ of $(\lambda_{0},0)$ in $\mathbb{R}\times U$ such that
\begin{align*}
\psi:&\;\mathfrak{F}^{-1}(0)\cap \mathcal{V}\longrightarrow \mathfrak{G}^{-1}(0)\cap\mathcal{U}, \quad (\lambda,u)\mapsto (\lambda,\langle u, \varphi_{0}^{\ast}\rangle),\\
\psi^{-1}:&\; \mathfrak{G}^{-1}(0)\cap \mathcal{U}\longrightarrow \mathfrak{F}^{-1}(0)\cap\mathcal{V}, \quad (\lambda,x)\mapsto (\lambda,x\varphi_{0}+\mathcal{Y}(\lambda,x\varphi_{0})),
\end{align*}
are analytic and mutual inverses.  Therefore, the analytical structures of $\mathfrak{F}^{-1}(0)\cap \mathcal{V}$ and $\mathfrak{G}^{-1}(0)\cap\mathcal{U}$ coincide. In particular, $(\lambda,x)$ is a regular point of $\mathfrak{G}$, i.e., $D_{x}\mathfrak{G}(\l,x)\neq 0$, if and only if, $(\lambda,x\varphi_{0}+\mathcal{Y}(\lambda,x\varphi_{0}))$ is a regular point of $\mathfrak{F}$, i.e.,
$$
 D_{u}\mathfrak{F}(\lambda,x\varphi_{0}+\mathcal{Y}(\lambda,x\varphi_{0}))\in GL(U,V).
$$
In other words, both the regular and the singular points are preserved.
\par
Without loss of generality, we can assume that $(\lambda_{0},0)=(0,0)$. In this way, the infinite dimensional problem can be reduced, locally, to the finite dimensional problem $\mathfrak{G}(\lambda,x)=0$, with $(\lambda,x)\in\mathcal{U}\subset\mathbb{R}^{2}$. Since $\mathfrak{G}$ is analytic and $\mathfrak{G}(\lambda,0)=0$, it admits the local expansion
\begin{equation*}
\mathfrak{G}(\lambda,x)=\sum_{i\geq 0,\; j\geq 1} a_{ij}\lambda^{i} x^{j}, \qquad (\lambda,x)\sim (0,0),
\end{equation*}
for certain coefficients $a_{ij}\in\mathbb{R}$, $(i,j)\in\mathbb{Z}_{+}^{2}$, $j\neq 0$. Thus, there exists an analytic function $g:\mathcal{U}\to \mathbb{R}$ such that
\begin{equation*}
\mathfrak{G}(\lambda,x)=x\sum_{i\geq 0,\; j\geq 1} a_{ij}\lambda^{i} x^{j-1}=xg(\lambda,x), \qquad (\lambda,x)\sim (0,0).
\end{equation*}
According to \cite[Sect. 6]{JJ4},  it follows from (F3)  that
\begin{equation}
\label{nueva}
1\leq \chi\equiv \chi[\mathfrak{L},0]=\ord_{\lambda=0}D_{x}\mathfrak{G}(\lambda,0)=\ord_{\lambda=0} g(\lambda,0).
\end{equation}
Hence, $g:\mathcal{U}\to\mathbb{R}$ can be expanded in the form
\begin{equation}
\label{iv.2}
g(\lambda,x)=\sum_{\nu=0}^{s}C_{\nu}\lambda^{j_{\nu}}x^{l_{\nu}}+\sum_{j,k}C_{j,k}\lambda^{j}x^{k},
\end{equation}
where $(\ell_{0},j_{0})=(0,\chi)$, $\chi>j_{1}>\cdots>j_{s}$, $0<\ell_{1}<\cdots<\ell_{s}$, and the summation of the second sum is taken only on the points $(k,j)$ lying above the polygonal line joining $(0,\chi)$, $(\ell_1,j_1)$, $\cdots$, $(l_{s},j_{s})$, or on the line $j=j_{s}$. The polygonal line joining the points $(0,\chi)$, $(\ell_{1},j_{1})$, $\cdots$, $(\ell_{s},j_{s})$ is usually called the Newton's polygon of $g$. Subsequently, we will use the next result of Kielh\"ofer \cite{Ki}. It is rewritten with our own notations here. Actually, the last assertion is based on \cite[Th. 4.3.3]{LGMC}.

\begin{theorem}
\label{th4.2}
	Let $\mathfrak{F}:\mathbb{R}\times U\to V$ be an analytic operator satisfying hypothesis {\rm (F1)--(F3)} with $\chi[\mathfrak{L},0]=\chi\geq 1$ and having the expansion  	
\begin{equation}
\label{iv.3}	 \mathfrak{F}(\lambda,u)=\mathfrak{L}(\lambda)u+\sum_{\nu=1}^{s}\lambda^{j_{\nu}}F_{j_{\nu},\ell_{\nu}+1}(u)
+\sum_{j,k}\lambda^{j}F_{j,k+1}(u), \quad \lambda\in \mathbb{R}, \ u\in U,
\end{equation}
where $\chi>j_{1}>\cdots>j_{s}$,  $0<\ell_{1}<\cdots<\ell_{s}$, and the summation of the second sum is taken on points $(k,j)$ lying above the polygonal line joining $(0,\chi)$, $(\ell_1,j_1)$, $\cdots$, $(\ell_{s},j_{s})$, or on the line $j=j_{s}$. If, in addition,
$$
   H_{\nu}:=\langle F_{j_{\nu}}^{(l_{\nu}+1)}(\varphi_{0},\cdots,\varphi_{0}),\varphi_0^{\ast}\rangle\neq 0, \quad \nu=1,\cdots,s,
$$
then the Newton polygon associated to the reduced map $g:\mathcal{U}\to\mathbb{R}$ defined by  \eqref{iv.2}  is given by the lower convex hull of the points $(0,\chi)$, $(\ell_1,j_1)$, $\cdots$, $(\ell_{s},j_{s})$. Furthermore, the corresponding coefficients are precisely the numbers
$$
  C_{\nu}=H_{\nu}, \quad \nu=1,\cdots,s,
$$
and $C_{0}=\rho^{(\chi)}(0)$, where $\rho(\lambda)$ is the perturbed eigenvalue from $0$ of the operator $\mathfrak{L}(\lambda)$.
\end{theorem}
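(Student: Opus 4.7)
The plan is to carry out the Lyapunov--Schmidt reduction sketched just before the statement, to identify the coefficients of $\mathfrak{G}(\lambda,x)=xg(\lambda,x)$ one by one, and to match them against the vertices of the predicted Newton polygon.

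First, substituting $u=x\varphi_{0}+\mathcal{Y}(\lambda,x\varphi_{0})$ into the expansion \eqref{iv.3} and pairing with $\varphi_{0}^{\ast}$ yields
\[
\mathfrak{G}(\lambda,x)=\bigl\langle\mathfrak{L}(\lambda)(x\varphi_{0}+\mathcal{Y}),\varphi_{0}^{\ast}\bigr\rangle+\sum_{\nu=1}^{s}\lambda^{j_{\nu}}\bigl\langle F_{j_{\nu},\ell_{\nu}+1}(x\varphi_{0}+\mathcal{Y}),\varphi_{0}^{\ast}\bigr\rangle+\mathcal{R}(\lambda,x),
\]
where $\mathcal{R}$ collects the contributions of the terms of $\mathfrak{F}$ whose $(k,j)$-exponent lies strictly above the claimed polygon or on the line $j=j_{s}$. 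Since $\mathcal{Y}(\lambda,0)=0$, one may expand $\mathcal{Y}(\lambda,x\varphi_{0})=\sum_{m\geq 1}\mathcal{Y}_{m}(\lambda)\,x^{m}$; differentiating $Q\mathfrak{F}(\lambda,x\varphi_{0}+\mathcal{Y})=0$ once in $x$ at $x=0$ yields $Q\mathfrak{L}(\lambda)(\varphi_{0}+\mathcal{Y}_{1}(\lambda))=0$, whence $\mathcal{Y}_{1}(0)=0$ and $\mathcal{Y}_{1}(\lambda)=O(\lambda)$. Equivalently, every summand of $\mathcal{Y}$ carries a factor $x^{m}\lambda^{\alpha}$ with $m+\alpha\geq 2$, which is the structural input that makes every occurrence of $\mathcal{Y}$ strictly more expensive than $x\varphi_{0}$ in $(\lambda,x)$-weight.

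Next, for fixed $\nu\in\{1,\ldots,s\}$, multilinearity and the homogeneity of $F_{j_{\nu},\ell_{\nu}+1}$ give
\[
\bigl\langle F_{j_{\nu},\ell_{\nu}+1}(x\varphi_{0}+\mathcal{Y}),\varphi_{0}^{\ast}\bigr\rangle=x^{\ell_{\nu}+1}H_{\nu}+\sum_{r=1}^{\ell_{\nu}+1}\binom{\ell_{\nu}+1}{r}\bigl\langle F_{j_{\nu}}^{(\ell_{\nu}+1)}\bigl((x\varphi_{0})^{\ell_{\nu}+1-r},\mathcal{Y}^{r}\bigr),\varphi_{0}^{\ast}\bigr\rangle.
\]
By the weight count of the previous paragraph, every monomial appearing in the correction sits at $(\ell_{\nu}+1+b,\,j_{\nu}+a)$ with $a+b\geq 1$; hence it lies strictly above, or strictly to the right of, $(\ell_{\nu},j_{\nu})$ in the polygon plane. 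An identical count shows that the contributions of $\lambda^{j_{\mu}}F_{j_{\mu},\ell_{\mu}+1}$ with $\mu\neq\nu$ and of $\mathcal{R}(\lambda,x)$ cannot reach the monomial $\lambda^{j_{\nu}}x^{\ell_{\nu}+1}$ of $\mathfrak{G}$. Consequently, the coefficient of $\lambda^{j_{\nu}}x^{\ell_{\nu}}$ in $g$ equals $H_{\nu}\neq 0$, so $(\ell_{\nu},j_{\nu})$ is an effective vertex of the polygon and $C_{\nu}=H_{\nu}$.

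For the distinguished vertex $(0,\chi)$, formula \eqref{nueva} already yields $\operatorname{ord}_{\lambda=0}g(\lambda,0)=\chi$, so $g(\lambda,0)=C_{0}\lambda^{\chi}+o(\lambda^{\chi})$ with $C_{0}\neq 0$. On the other hand, differentiating $\mathfrak{G}$ in $x$ at $x=0$ gives
\[
g(\lambda,0)=D_{x}\mathfrak{G}(\lambda,0)=\bigl\langle\mathfrak{L}(\lambda)(\varphi_{0}+\mathcal{Y}_{1}(\lambda)),\varphi_{0}^{\ast}\bigr\rangle,
\]
and $\widetilde{\varphi}(\lambda):=\varphi_{0}+\mathcal{Y}_{1}(\lambda)$ is precisely the analytic branch of generalized eigenvectors that \cite[Th.~4.3.3]{LGMC} uses to construct the perturbed eigenvalue $\rho(\lambda)$ of $\mathfrak{L}(\lambda)$ from $0$; invoking that result identifies $g(\lambda,0)$ with $\rho(\lambda)$ up to the chosen normalization, yielding $C_{0}=\rho^{(\chi)}(0)$. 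The single delicate point is the degree bookkeeping of the previous paragraph: one must verify that neither the $\mathcal{Y}$-corrections nor the ``above the polygon'' tail accidentally feed back into a vertex of the Newton polygon of $g$. The observation $\mathcal{Y}_{1}(\lambda)=O(\lambda)$, together with the polynomial homogeneity of each $F_{j,k+1}$, reduces this to the direct degree count used above.
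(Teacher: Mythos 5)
There is a genuine gap, and it sits exactly where you claim the problem has been ``reduced to the direct degree count''. (For context: the paper itself does not prove Theorem \ref{th4.2}; it quotes it from Kielh\"ofer \cite{Ki}, with the identification $C_{0}=\rho^{(\chi)}(0)$ attributed to \cite[Th. 4.3.3]{LGMC}, so your argument has to stand on its own.) Your only structural input on the corrector is that every monomial $\lambda^{\alpha}x^{m}$ of $\mathcal{Y}(\lambda,x\varphi_{0})$ has $m\geq 1$ and $m+\alpha\geq 2$. That count does handle the mixed multilinear corrections inside each $\lambda^{j_{\nu}}F_{j_{\nu},\ell_{\nu}+1}$ and the tail $\mathcal{R}$, but you never account for the term $\langle \mathfrak{L}(\lambda)\mathcal{Y}(\lambda,x\varphi_{0}),\varphi_{0}^{\ast}\rangle$ coming from the linear part, and this is the dangerous contribution. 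Since $\langle \mathfrak{L}(0)z,\varphi_{0}^{\ast}\rangle=0$ for all $z$ (as $R[\mathfrak{L}_{0}]$ is the kernel of $\varphi_{0}^{\ast}$), a monomial $\lambda^{\alpha}x^{m}$ of $\mathcal{Y}$ feeds into $g$ at the points $(m-1,\alpha+t)$, $t\geq 1$. With only $m+\alpha\geq 2$ available, $(m,\alpha)=(\ell_{\nu}+1,\,j_{\nu}-1)$ is not excluded when $j_{\nu}\geq 1$, so this term could a priori contribute to the vertex monomial $\lambda^{j_{\nu}}x^{\ell_{\nu}}$ of $g$ and spoil $C_{\nu}=H_{\nu}$; worse, a monomial $x^{2}$ of $\mathcal{Y}$ with $\alpha=0$ (allowed by your count) would produce a point $(1,1)$, which lies strictly below the claimed lower convex hull whenever the polygon is high enough at $\ell=1$. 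So the Newton polygon assertion itself is not established by your bookkeeping.

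What is missing is control of $\ord_{\lambda}\mathcal{Y}_{m}(\lambda)$ for every $m\geq 2$, not just $\mathcal{Y}_{1}(\lambda)=O(\lambda)$. One must show, by induction on $m$ using the auxiliary equation $Q\mathfrak{F}(\lambda,x\varphi_{0}+\mathcal{Y})=0$ (where $Q\mathfrak{L}(\lambda)|_{Z}$ is an analytic family of isomorphisms near $\lambda=0$) together with the hypothesis that $\mathfrak{F}$ carries no terms below the polygon, that the Newton diagram of $\mathcal{Y}$ itself lies on or above the polygon in the appropriate sense; only then do the contributions $\langle \mathfrak{L}(\lambda)\mathcal{Y},\varphi_{0}^{\ast}\rangle$ and the mixed terms stay on or above the hull and away from the vertices. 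This inductive majorization is the heart of Kielh\"ofer's proof and cannot be replaced by the single observation $m+\alpha\geq 2$. The remaining ingredients you use are fine as cited: \eqref{nueva} gives $\ord_{\lambda=0}g(\lambda,0)=\chi$, the nonlinear terms cannot touch the column $\ell=0$ because $\mathcal{Y}=O(x)$, and the identification of $g(\lambda,0)=\langle\mathfrak{L}(\lambda)(\varphi_{0}+\mathcal{Y}_{1}(\lambda)),\varphi_{0}^{\ast}\rangle$ with the perturbed eigenvalue through \cite[Th. 4.3.3]{LGMC} is exactly how the paper justifies $C_{0}=\rho^{(\chi)}(0)$, although ``up to the chosen normalization'' deserves to be made explicit rather than waved at.
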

\par
Next, we apply Theorem \ref{th4.2} with $d=\s_{1}^{-1}$ to the operator $\mathfrak{F}_{d}:\mathbb{R}\times W^{2,p}_{0}(\Omega)\to L^{p}(\Omega)$ defined by
$$
   \mathfrak{F}_{d}(\lambda,u):=d\Delta u+\lambda \langle \mathfrak{a},\nabla u\rangle
   + u+\lambda u^{2}-u^{q},\quad (\l,u)\in \mathbb{R}\times W^{2,p}_{0}(\Omega).
$$
Since $q$ is integer, $\mathfrak{F}_d$ is analytic. Moreover, by our previous analysis,
it satisfies hypothesis (F1)-(F3) with $\l_{0}=0$ and $\varphi_{0}$ the principal eigenfunction associated to $\s_{1}$. As in this setting $V=L^{p}(\Omega)$, we have that $V' =L^{p'}(\Omega)$, where
$p'$ is  the H\"{o}lder conjugate of $p$, i.e.,
$$
  \frac{1}{p}+\frac{1}{p'}=1.
$$
Thus,  the duality pairing $\langle\cdot,\cdot\rangle_{V,V'}$ is given through
\begin{equation*}
\langle f,g \rangle \equiv \langle f, g \rangle_{V,V'}:=\int_{\Omega}fg \, dx, \qquad (f,g)\in L^{p}(\Omega)\times L^{p'}(\Omega).
\end{equation*}
In order to apply Theorem \ref{th4.2}, it is appropriate to express the operator $\mathfrak{F}_{d}$ in the form
\begin{equation*}
\mathfrak{F}_{d}(\lambda,u)=\mathfrak{L}_{d}(\lambda)u+\lambda F_{1,2}(u)+F_{0,q}(u), \qquad (\lambda,u)\in \mathbb{R}\times W^{2,p}_{0}(\Omega),
\end{equation*}
where
\begin{equation}
\label{iv.4}
\mathfrak{L}_{d}(\lambda)u:=d\Delta u+\lambda \langle \mathfrak{a},\nabla u\rangle+ u, \qquad
(\l, u)\in\R \times W^{2,p}_{0}(\Omega),
\end{equation}
and
\begin{equation*}
F_{1,2}(u):=u^{2}, \quad F_{0,q}(u):=-u^{q}, \qquad u\in W^{2,p}_{0}(\O),
\end{equation*}
which is consistent with the notations used in the expansion \eqref{iv.3}.
\par
According to Theorem \ref{th3.2} (ii), $\chi[\mathfrak{L}_{d},0]=2$. Thus, Theorem \ref{th4.2} implies that
\begin{equation}
\label{iv.5}
g(\lambda,x)=C_{0}\lambda^{2}+\langle \varphi_{0}^{2},\varphi_{0}\rangle x\lambda-\langle \varphi_{0}^{q},\varphi_{0}\rangle x^{q-1}+\sum_{j,k}C_{j,k}\lambda^{j}x^{k}, \quad (\lambda,x)\in\mathcal{U},
\end{equation}
where the summation of the second sum is only over points $(k,j)$ lying above the polygonal with vertices  $(0,2)$, $(1,1)$ and $(q-1,0)$. The Newton polygon of $g$ is the convex hull of the points $(0,2)$, $(1,1)$ and $(q-1,0)$ represented in Figure \ref{F5}.

\begin{center}
	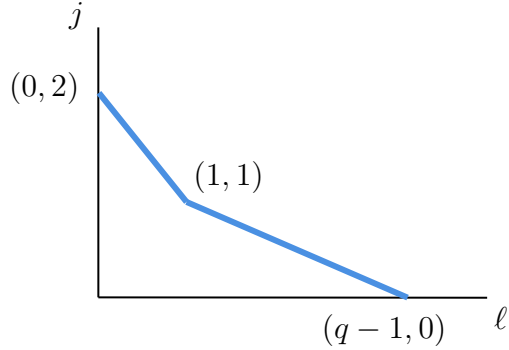
\begin{figure}[h!]
		\tikzset{every picture/.style={line width=0.75pt}} %set default line width to 0.75pt
		
		\begin{tikzpicture}[x=0.75pt,y=0.75pt,yscale=-1,xscale=1]
		%uncomment if require: \path (0,300); %set diagram left start at 0, and has height of 300
		
		%Straight Lines [id:da3161548578641966]
		\draw    (156.98,59.19) -- (157,195) ;
		%Straight Lines [id:da12459498725646445]
		\draw    (157,195) -- (350.98,195.19) ;
		%Straight Lines [id:da4383494882381699]
		\draw [color={rgb, 255:red, 74; green, 144; blue, 226 }  ,draw opacity=1 ][line width=2.25]    (157.28,92.13) -- (201,147) ;
		%Straight Lines [id:da5936262586308574]
		\draw [color={rgb, 255:red, 74; green, 144; blue, 226 }  ,draw opacity=1 ][line width=2.25]    (201,147) -- (311.28,195.13) ;
		
		% Text Node
		\draw (203,125) node [anchor=north west][inner sep=0.75pt]    {$( 1,1)$};
		% Text Node
		\draw (111,80) node [anchor=north west][inner sep=0.75pt]    {$( 0,2)$};
		% Text Node
		\draw (267,202) node [anchor=north west][inner sep=0.75pt]    {$( q-1,0)$};
		% Text Node
		\draw (141,45) node [anchor=north west][inner sep=0.75pt]    {$j$};
		% Text Node
		\draw (352.98,198.19) node [anchor=north west][inner sep=0.75pt]    {$\ell$};

		\end{tikzpicture}
		\caption{Newton diagram of $g(\l,x)$}
		\label{F5}
	\end{figure}
\end{center}

The next result follows readily from the last assertion of Theorem \ref{th4.2}.

\begin{lemma}
	The first coefficient of $g(\lambda,x)$ is given by $C_{0}=-\frac{|\mathfrak{a}|^{2}}{2d}$.
\end{lemma}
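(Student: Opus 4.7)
My plan is to invoke the final assertion of Theorem~\ref{th4.2}, according to which $C_0=\rho^{(\chi)}(0)$, where $\rho(\lambda)$ is the perturbed eigenvalue of $\mathfrak{L}_d(\lambda)$ bifurcating from $0$ at $\lambda=0$. Since Theorem~\ref{th3.2}(ii) yields $\chi[\mathfrak{L}_d,0]=2$ for $d=\sigma_1^{-1}$, the entire task reduces to computing $\rho''(0)$.

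The key simplification comes from the conjugation $\mathfrak{S}_d:=\mathfrak{R}_d\circ\mathfrak{L}_d\circ\mathfrak{P}_d$ already exploited in \eqref{iii.6}. Since $\mathfrak{P}_d(\lambda)$ and $\mathfrak{R}_d(\lambda)$ act by pointwise multiplication by $e^{\mp\frac{\lambda}{2d}\langle\mathfrak{a},x\rangle}$ and are mutually inverse, a direct check gives that $\mathfrak{L}_d(\lambda)u=\rho u$ if and only if $\mathfrak{S}_d(\lambda)v=\rho v$ with $v:=\mathfrak{R}_d(\lambda)u$: apply $\mathfrak{R}_d(\lambda)$ to the first identity and use $\mathfrak{P}_d\mathfrak{R}_d=I$. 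Consequently $\mathfrak{L}_d$ and $\mathfrak{S}_d$ share the perturbed eigenvalue from $0$, and it suffices to compute it for $\mathfrak{S}_d$.

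The operator $\mathfrak{S}_d(\lambda)=d\Delta+\bigl(1-\tfrac{\lambda^{2}|\mathfrak{a}|^{2}}{4d}\bigr)I$ is simultaneously diagonalized by the Dirichlet eigenfunctions $\{\psi_k\}_{k\geq 1}$ of $-\Delta$, with eigenvalues $\{\sigma_k\}_{k\geq 1}$, yielding $\mathfrak{S}_d(\lambda)\psi_k=\bigl(1-d\sigma_k-\tfrac{\lambda^{2}|\mathfrak{a}|^{2}}{4d}\bigr)\psi_k$. At $d=\sigma_1^{-1}$ the only branch vanishing at $\lambda=0$ is the one corresponding to $\psi_1=\varphi_0$, namely $\rho(\lambda)=-\tfrac{\lambda^{2}|\mathfrak{a}|^{2}}{4d}$, and differentiating twice yields $C_0=\rho''(0)=-\tfrac{|\mathfrak{a}|^{2}}{2d}$, as claimed. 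The computation is entirely mechanical once Theorem~\ref{th4.2} has been invoked; the only point requiring care is the observation that multiplicative conjugation preserves the full eigenvalue problem (not merely invertibility), which is what permits replacing the convection--diffusion operator $\mathfrak{L}_d$ by the much simpler diagonal operator $\mathfrak{S}_d$.
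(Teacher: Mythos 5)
Your proposal is correct and follows essentially the same route as the paper: both reduce, via the last assertion of Theorem~\ref{th4.2} and $\chi[\mathfrak{L}_d,0]=2$, to computing $\rho''(0)$, and both identify $\rho(\lambda)=-\tfrac{\lambda^2|\mathfrak{a}|^2}{4d}$ through the exponential change of variables (the paper substitutes $u=e^{-\frac{\lambda}{2d}\langle\mathfrak{a},x\rangle}v$ directly in the eigenvalue problem, which is exactly your conjugation by $\mathfrak{P}_d$, $\mathfrak{R}_d$). Your explicit spectral decomposition of $\mathfrak{S}_d(\lambda)$ and the remark that the $\psi_1$-branch is the only one vanishing at $\lambda=0$ merely make precise the paper's ``it becomes apparent'' step; no substantive difference.
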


\begin{proof}
Indeed, since $\chi[\mathfrak{L}_{d},0]=2$, by Theorem \ref{th4.2}, the coefficient $C_{0}$ is given by $\rho^{(2)}(0)$ where $\rho(\lambda)$ is the perturbation of the zero eigenvalue of $\mathfrak{L}_{d}(\l)$.
By definition, $\rho(\lambda)$ satisfies the eigenvalue problem
\begin{equation}
\label{iv.6}
	\left\{ \begin{array}{ll} d\Delta u+\lambda  \langle \mathfrak{a},\nabla u\rangle+u=\rho(\lambda)u &
\quad \text{ in } \Omega,\\ u=0 & \quad \text{ on } \partial\Omega.\end{array}\right.
\end{equation}
As the change of variables $u(x)=e^{-\frac{\lambda }{2d}\langle \mathfrak{a},x\rangle}v(x)$, $x\in\Omega$, transforms \eqref{iv.6} into
\begin{equation*}
\left\{ \begin{array}{ll} \Delta v+\left[\frac{1}{d}\left(1-\frac{\lambda^{2}|\mathfrak{a}|^{2}}{4d}\right)-\frac{1}{d}\rho(\lambda)\right]v=0
 & \quad \text{ in } \Omega,\\ v=0 & \quad   \text{ on } \partial\Omega,\end{array}\right.
\end{equation*}
and we are assuming that  $1/d=\s_{1}$, it becomes apparent that  $\rho(\lambda)=-\frac{\lambda^{2}|\mathfrak{a}|^{2}}{4d}$. Therefore,
$C_{0}=\rho^{(2)}(0)=-\frac{|\mathfrak{a}|^{2}}{2d}$.  This ends the proof.
\end{proof}

Substituting the value $C_0=-\frac{|\mathfrak{a}|^{2}}{2d}$ in \eqref{iv.5}, and owing the
Newton--Puiseux algorithm, we obtain the following asymptotic expansion of solutions of $g(\lambda,x)=0$ in a neighborhood of $(0,0)$,
\begin{equation}
\label{iv.7}
\begin{split}
x(\lambda) & =\tfrac{|\mathfrak{a}|^{2}}{2d\langle \varphi_{0}^{2},\varphi_{0}\rangle}\lambda+O(\lambda)
\quad \hbox{as}\;\;\l\to 0, \\
 x(\lambda) & =\pm \left(\tfrac{\langle \varphi_{0}^{2},\varphi_{0}\rangle}{\langle \varphi_{0}^{q},\varphi_{0}\rangle}\right)^{\frac{1}{q-2}}\lambda^{\frac{1}{q-2}}
 +O(\lambda^{\frac{1}{q-2}})\quad \hbox{as}\;\; \l\downarrow 0,
\end{split}
\end{equation}
if $q$ is even,  and
\begin{equation}
\label{iv.8}
\begin{split}
x(\lambda) & =\tfrac{|\mathfrak{a}|^{2}}{2d\langle \varphi_{0}^{2},\varphi_{0}\rangle}
\lambda+O(\lambda) \quad \hbox{as}\;\;\l\to 0, \\
x(\lambda) & =\left(\tfrac{\langle \varphi_{0}^{2},\varphi_{0}\rangle}{\langle \varphi_{0}^{q},\varphi_{0}\rangle}\right)^{\frac{1}{q-2}}\lambda^{\frac{1}{q-2}}
+O(\lambda^{\frac{1}{q-2}})\quad \hbox{as}\;\; \l\to 0,
\end{split}
\end{equation}
if $q$ is odd. By Theorem \ref{th3.2}(ii), $\chi[\mathfrak{L}_{d},0]=2$. Thus,
it follows from \eqref{nueva} that
\begin{equation*}
\chi[\mathfrak{L}_{d},0] = \ord_{\l=0}g(\l,0)=2.
\end{equation*}
Consequently, by the Weierstrass--Malgrange preparation theorem, shortening the neighborhood $\mathcal{U}=\mathcal{U}_{\lambda}\times\mathcal{U}_{x}\subset\mathbb{R}^{2}$,  if necessary,  there exists an analytic function $c:\mathcal{U}\to\mathbb{R}$ such that  $c(0,0)\neq 0$, plus $\chi=2$ analytic functions, $c_{j}:\mathcal{U}_{x}\to\mathbb{R}$, $c_{j}(0)=0$, $j=1,2$, such that
\begin{equation*}
g(\lambda,x)=c(\lambda,x)\left[\lambda^{2}+c_{1}(x)\lambda+c_{2}(x)\right].
\end{equation*}
Hence for each $x\in \mathcal{U}_{x}$, the equation $g(\lambda,x)=0$ has, at most, two solutions. This shows that indeed, the solutions \eqref{iv.7} and \eqref{iv.8}, are the unique ones of $g(\lambda,x)=0$ in a neighbourhood of $(0,0)$. Figure \ref{F} represents these branches in each of these cases.
\par It should be noted that if $q=3$, the Newton diagram illustrated in Figure \ref{F5} becomes an straight line. Consequently, the Newton--Puiseux algorithm does not provide, in general, with two branches of solutions. Actually, a direct application of the Newton--Puiseux algorithm shows that the number of branches depends on the real solutions $y$ of the polynomial
\begin{equation*}
\langle\varphi_{0}^{3},\varphi_{0}\rangle y^{2}-\langle\varphi_{0}^{2},\varphi_{0}\rangle y+\frac{|\mf{a}|^{2}}{2d}=0.
\end{equation*}
As the special case $q=3$ does not follow the patterns of the general case $q\geq 4$, it has been left outside the general scope of this paper. Thus, throughout the rest of this paper we assume that $q\geq 4$.

\begin{center}
	\begin{figure}[h!]

		\tikzset{every picture/.style={line width=0.75pt}} %set default line width to 0.75pt
		
		\begin{tikzpicture}[x=0.75pt,y=0.75pt,yscale=-1,xscale=1]
		%uncomment if require: \path (0,206); %set diagram left start at 0, and has height of 206
		
		%Straight Lines [id:da6042922412597574]
		\draw    (287.04,25.78) -- (287,147) ;
		%Straight Lines [id:da7374606319199516]
		\draw [line width=2.25]    (345.04,86.28) -- (229,86.5) ;
		%Straight Lines [id:da30442242464495417]
		\draw    (99.04,25.78) -- (99,147) ;
		%Straight Lines [id:da07095304435829886]
		\draw [line width=2.25]    (157.04,86.28) -- (41,86.5) ;
		%Straight Lines [id:da25812436952685647]
		\draw [color={rgb, 255:red, 74; green, 144; blue, 226 }  ,draw opacity=1 ][line width=2.25]    (338.33,58.18) -- (287.02,86.88) ;
		%Straight Lines [id:da19074137113879708]
		\draw [color={rgb, 255:red, 74; green, 144; blue, 226 }  ,draw opacity=1 ][line width=2.25]    (150.33,58.18) -- (99.02,86.39) ;
		%Curve Lines [id:da051141784684048064]
		\draw [color={rgb, 255:red, 74; green, 144; blue, 226 }  ,draw opacity=1 ][line width=2.25]    (99.02,86.39) .. controls (116.02,52.39) and (149,48.51) .. (150.02,47.39) ;
		%Curve Lines [id:da011278885209676903]
		\draw [color={rgb, 255:red, 126; green, 211; blue, 33 }  ,draw opacity=1 ][line width=2.25]    (52,127) .. controls (77,119) and (89,108) .. (99.02,86.39) ;
		%Straight Lines [id:da6010267577294153]
		\draw [color={rgb, 255:red, 126; green, 211; blue, 33 }  ,draw opacity=1 ][line width=2.25]    (99.02,86.39) -- (47.71,114.6) ;
		%Straight Lines [id:da9289822323496145]
		\draw [color={rgb, 255:red, 126; green, 211; blue, 33 }  ,draw opacity=1 ][line width=2.25]    (287.02,86.88) -- (235.7,115.09) ;
		%Curve Lines [id:da35736594915055064]
		\draw [color={rgb, 255:red, 74; green, 144; blue, 226 }  ,draw opacity=1 ][line width=2.25]    (287.02,86.39) .. controls (291.29,45.92) and (339.59,46.77) .. (341.29,46.92) ;
		%Curve Lines [id:da419173704012645]
		\draw [color={rgb, 255:red, 126; green, 211; blue, 33 }  ,draw opacity=1 ][line width=2.25]    (287.02,86.88) .. controls (291.29,120.92) and (331.59,119.77) .. (340.59,118.77) ;
		
		% Text Node
		\draw (244,157) node [anchor=north west][inner sep=0.75pt]    {\quad $q\geq 4$ \hbox{ is even}};
		% Text Node
		\draw (54,157) node [anchor=north west][inner sep=0.75pt]    {\quad $q\geq 5$ \hbox{ is odd}};

		\end{tikzpicture}
	\caption{$g^{-1}(0)$ in a neighborhood of $(0,0)$}
	\label{F}
	\end{figure}
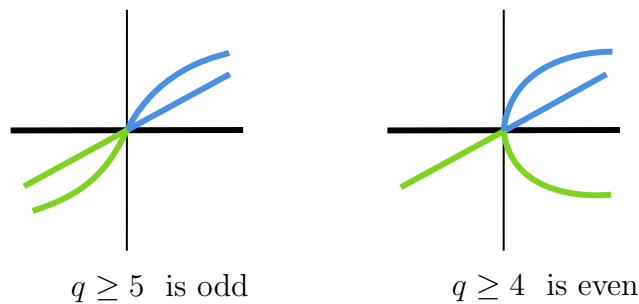
\end{center}

The following result establishes that in a neighborhood of $(\l,u)=(0,0)$ the solutions of \eqref{i.1}, $(\l,u)$, must be either positive or negative. By Theorem \ref{th2.4}, this entails $u\gg 0$, or $u\ll 0$.

\begin{proposition}
\label{pr4.4}
There exists $\varepsilon>0$ such that either $u\gg 0$, or $-u\gg 0$, for every
$(\lambda,u)\in\mathfrak{F}_{d}^{-1}(0)$ such that $|\lambda|+\|u\|_{W^{2,p}}<\varepsilon$.
In other words, there exists a neighborhood of $(0,0)$ in  $\mathbb{R}\times W^{2,p}_{0}(\Omega)$,  $\mathcal{U}$, such that $\mathfrak{F}_{d}^{-1}(0)\cap \mathcal{U}$ consists of positive, or negative,  solutions.
\end{proposition}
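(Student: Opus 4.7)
The plan is to combine the Lyapunov--Schmidt reduction developed above with the openness of the strongly-positive cone in $\mathcal{C}^1_0(\bar\Omega)$. By that reduction, every solution of $\mathfrak{F}_d(\lambda, u) = 0$ sufficiently close to $(0,0)$ in $\R \times W^{2,p}_0(\Omega)$ has the form
\begin{equation*}
u = x\varphi_0 + \mathcal{Y}(\lambda, x\varphi_0), \qquad (\lambda, x) \in \mathcal{U} \subset \R^2,
\end{equation*}
with $\mathfrak{G}(\lambda, x) = 0$, and the correspondence $(\lambda, u) \leftrightarrow (\lambda, x)$ is real-analytic in both directions. Since $\mathfrak{F}_d(\lambda, 0) = 0$ for every $\lambda$, the uniqueness built into $\mathcal{Y}$ forces $\mathcal{Y}(\lambda, 0) = 0$ for all $\lambda$ close to $0$. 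In particular, the trivial branch corresponds exactly to $x = 0$, so the content of the proposition is that every non-trivial solution sitting near $(0,0)$ satisfies $\mathrm{sign}(u) = \mathrm{sign}(x)$ in the strong sense of Theorem \ref{th2.4}.

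The central step is to prove that $\mathcal{Y}(\lambda, x\varphi_0) = o(|x|)$ in $W^{2,p}_0(\Omega)$ as $(\lambda, x) \to (0,0)$. Since $\mathcal{Y}(\lambda, 0) \equiv 0$, Taylor's formula in $x$ reduces this to verifying $\partial_x [\mathcal{Y}(\lambda, x\varphi_0)]|_{(0,0)} = 0$. Implicitly differentiating the defining identity $Q\mathfrak{F}_d(\lambda, x\varphi_0 + \mathcal{Y}(\lambda, x\varphi_0)) = 0$ with respect to $x$ at $(0,0)$ gives
\begin{equation*}
Q\mathfrak{L}_d(0)\varphi_0 + Q\mathfrak{L}_d(0)\bigl(\partial_x \mathcal{Y}(0,0)\varphi_0\bigr) = 0.
\end{equation*}
As $\varphi_0 \in N[\mathfrak{L}_d(0)]$ annihilates the first summand and $Q\mathfrak{L}_d(0)$ restricts to an isomorphism from a topological complement of $N[\mathfrak{L}_d(0)]$ in $W^{2,p}_0(\Omega)$ onto $R[\mathfrak{L}_d(0)]$, the derivative must vanish.

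It follows that, for any solution with $x \neq 0$,
\begin{equation*}
\frac{u}{x} = \varphi_0 + \frac{\mathcal{Y}(\lambda, x\varphi_0)}{x} \longrightarrow \varphi_0 \quad \text{in } W^{2,p}_0(\Omega) \text{ as } (\lambda, x) \to (0,0).
\end{equation*}
The Sobolev embedding $W^{2,p}(\Omega) \hookrightarrow \mathcal{C}^{1,1-N/p}(\bar\Omega)$, valid because $p > N$, upgrades this to convergence in $\mathcal{C}^1_0(\bar\Omega)$. By the Hopf boundary lemma one has $\varphi_0 > 0$ in $\Omega$ and $\partial_n \varphi_0 < 0$ on $\partial\Omega$, so $\varphi_0$ lies in the interior of the positive cone of $\mathcal{C}^1_0(\bar\Omega)$, and this interior is open. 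Shrinking $\varepsilon$ so that $u/x$ remains in this open interior whenever $|\lambda| + \|u\|_{W^{2,p}} < \varepsilon$ and $x = \langle u, \varphi_0^{\ast}\rangle \neq 0$ (exploiting that $|x|$ is controlled by $\|u\|_{W^{2,p}}$), we conclude $u \gg 0$ when $x > 0$ and $-u \gg 0$ when $x < 0$, while the case $x = 0$ is precisely the trivial branch. The only mild obstacle is the derivative computation in the middle step; everything else is qualitative and rests on the strict positivity of $\varphi_0$ together with the openness of the strongly positive cone in $\mathcal{C}^1_0(\bar\Omega)$.
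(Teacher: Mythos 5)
Your proof is correct, but it takes a genuinely different route from the paper's. The paper never touches the Lyapunov--Schmidt reduction here: it argues sequentially, taking nontrivial solutions $(\lambda_n,u_n)\to(0,0)$, passing to the transformed problem \eqref{iii.4}, normalizing $\psi_n=v_n/\|v_n\|_\infty$, and using the compactness of $(-\Delta)^{-1}$ together with the simplicity of $\sigma_1$ to show that, along a subsequence, $\psi_n\to\pm\varphi_0$, whence the solutions are eventually strongly positive or strongly negative. You instead stay inside the reduction of Section 4.2: every solution near $(0,0)$ is $u=x\varphi_0+\mathcal{Y}(\lambda,x\varphi_0)$ with $x=\langle u,\varphi_0^{\ast}\rangle$, $\mathcal{Y}(\lambda,0)\equiv 0$ by uniqueness, and the implicit differentiation of $Q\mathfrak{F}_d=0$ at $(0,0)$ kills $\partial_x\mathcal{Y}(0,0)\varphi_0$, so that $u/x\to\varphi_0$ in $W^{2,p}_0(\Omega)\hookrightarrow\mathcal{C}^{1}(\bar\Omega)$ and the openness of the interior of the positive cone of $\mathcal{C}^1_0(\bar\Omega)$ concludes, in the spirit of Theorem \ref{th2.4}. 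Both arguments rest on the same two facts (rescaled solutions converge to $\varphi_0$, and $\varphi_0$ is an interior point of the cone), but yours is quantitative ($u=x(\varphi_0+o(1))$ with the sign dictated by $x$), avoids subsequence extraction, and produces convergence directly in the $\mathcal{C}^1$ topology, which is the one actually needed for the conclusion $u\gg 0$; the paper's compactness argument, in exchange, does not use analyticity or the reduction at all, so it transfers verbatim to other bifurcation points and weaker regularity settings. Two details you leave implicit and should record: the estimate $\mathcal{Y}(\lambda,x\varphi_0)=o(|x|)$ must be uniform in $\lambda$, which follows from writing $\mathcal{Y}(\lambda,x\varphi_0)=x\int_0^1 D_w\mathcal{Y}(\lambda,tx\varphi_0)\varphi_0\,dt$ and using continuity of the integrand plus its vanishing at $(0,0)$; and the cancellation in your derivative identity uses that $\partial_x\mathcal{Y}(0,0)\varphi_0$ lies in the fixed complement $Z$ of $\mathrm{span}[\varphi_0]$, on which $Q\mathfrak{L}_d(0)=\mathfrak{L}_d(0)$ is an isomorphism onto $R[\mathfrak{L}_d(0)]$.
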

\begin{proof}
Let $\{(\lambda_{n},u_{n})\}_{n\in\mathbb{N}}\subset \mathfrak{F}_{d}^{-1}(0)$ be a sequence of solutions such that
\begin{equation}
\label{iv.9}
\lim_{n\to\infty}(\lambda_{n},u_{n})=(0,0) \quad \hbox{in}\;\; \mathbb{R}\times W^{2,p}_{0}(\Omega).
\end{equation}
Since $W^{2,p}_{0}(\Omega)\hookrightarrow\mathcal{C}^{1,1-\frac{N}{p}}(\bar \Omega)$, \eqref{iv.9} holds in $\mathbb{R}\times\mathcal{C}^{1}(\overline{\Omega})$. Performing the change of variables
\begin{equation*}
  v_{n}(x):=e^{\frac{\lambda}{2d}\langle \mathfrak{a},  x\rangle}u_{n}(x), \quad x\in\Omega\subset\mathbb{R}^{N},
\end{equation*}
it is apparent that, for every $n\geq 1$,
\begin{equation*}
v_{n}=(-\Delta)^{-1}\left[\tfrac{1}{d}\left(1-\tfrac{\lambda_{n}^{2} |\mathfrak{a}|^{2}}{4d}\right)v_{n}+f_{d}(\lambda_{n},x,v_{n})v_{n}\right],
\end{equation*}
where, setting $\zeta(\lambda,d,x) :=e^{-\frac{\lambda}{2d}\langle \mathfrak{a},x\rangle}$,
we have denoted
$$
    f_{d}(\lambda,x,v) :=d^{-1}(\lambda-\zeta^{q-2}(\lambda,d,x)v^{q-2})\zeta(\lambda,d,x)v
$$
(see \eqref{iii.4}, if necessary). Then, the functions $\psi_{n}:=\frac{v_{n}}{\|v_{n}\|_{\infty}}$, $n\geq 1$, satisfy $\|\psi_{n}\|_{\infty}=1$ and
\begin{equation}
\label{iv.10}
\psi_{n}=(-\Delta)^{-1}\left[\tfrac{1}{d}\left(1-\tfrac{\lambda_{n}^{2} |\mathfrak{a}|^{2}}{4d}\right)\psi_{n}+f_{d}(\lambda_{n},x,v_{n})\psi_{n}\right]
\end{equation}
for all $n\geq 1$. On the other hand, the sequence
\begin{equation*}
g_{n} :=\tfrac{1}{d}\left(1-\tfrac{\lambda_{n}^{2} |\mathfrak{a}|^{2}}{4d}\right)\psi_{n}+f_{d}(\lambda_{n},x,v_{n})\psi_{n}, \quad n\geq 1,
\end{equation*}
is bounded in $\mc{C}(\bar \O)$. Thus, by the compactness of $(-\Delta)^{-1}$, there exists $\psi\in\mathcal{C}(\bar {\Omega})$ such that, along some subsequence, relabeled by $n\geq 1$,
$\lim_{n\to\infty}\psi_{n}=\psi$  in $\mathcal{C}(\bar {\Omega})$. By \eqref{iv.9},
\begin{equation*}
\lim_{n\to\infty} f_{d}(\lambda_{n},x,v_{n})=0.
\end{equation*}
Thus, letting $n\to\infty$ in \eqref{iv.10}, yields $\psi=d^{-1}(-\Delta)^{-1}\psi$. Therefore,
since $\sigma_{1}=d^{-1}$, by the simplicity of $\s_1$, it is apparent that $\psi=\pm\varphi_{0}$.
Hence, either $\psi\gg 0$, or $\psi\ll 0$ (i.e., $-\psi\gg 0$). This entails that either
 $v_n\gg 0$, or $v_n\ll 0$, for sufficiently large $n$, and ends the proof.
\end{proof}

In the previous Lyapunov--Schmidt reduction we have denoted $x=\langle u, \varphi_{0}\rangle$. Thus,
\begin{equation*}
x=\langle u, \varphi_{0}\rangle=\int_{\Omega}u\varphi_{0}\, dx.
\end{equation*}
Thus, by Proposition \ref{pr4.4}, the solutions of the bifurcation equation
$$
   \mf{G}_{d}(\lambda,x)=(I_{V}-Q)\mathfrak{F}_{d}(\lambda,x\varphi_{0}+\mathcal{Y}(\lambda,x\varphi_{0}))=0
$$
are positive for $x>0$ and negative for $x<0$. Therefore, according to the asymptotic expansions \eqref{iv.7} and \eqref{iv.8}, when $q$ is even, for $\l>0$ there emanate from $(0,0)$ two branches of positive solutions and one branch of negative solutions, while another branch of negative solutions emanates for $\l<0$, as illustrated by Figure \ref{F}. Similarly, when $q$ is odd, for $\l>0$ there emanate from $(0,0)$ two branches of positive solutions, while there  emanate another two branches of negative solutions  for $\l<0$, as illustrated by Figure \ref{F}. This concludes the analysis of the local structure of the solution set when $d=\s_1^{-1}$.

\section{A priori bounds for the positive Solutions}

\noindent In this section we establish the existence of a priori bounds for the positive solutions of  \eqref{i.1}. For any given $d>0$, we will denote by  $\mathscr{S}_{d}$ the set of positive solutions of \eqref{i.1}, i.e.,
\begin{equation*}
\mathscr{S}_{d}:=\{(\l,u)\in\mathfrak{F}_{d}^{-1}(0): \;u\gg 0\}\subset \mathbb{R}\times W^{2,p}_{0}(\Omega).
\end{equation*}
The next result shows the existence of a priori bounds regardless the size of $d>0$.

\begin{lemma}
	\label{le5.1}
There exists a real valued function $C:\mathbb{R}\to(0,+\infty)$, such that, for every
$d>0$ and  $(\lambda,u)\in\mathscr{S}_{d}$,
\begin{equation}
\label{v.1}
	\|u\|_{\infty}\leq C(\l)\leq \left\{\begin{array}{ll} 1 & \quad \hbox{if}\;\; \lambda\leq 0, \\
    \lambda^{\frac{1}{q-2}}+1, & \quad \hbox{if}\;\; \lambda > 0. \end{array}\right.
\end{equation}
\end{lemma}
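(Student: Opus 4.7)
The plan is to reduce the BVP to a scalar algebraic inequality by applying the maximum principle at an interior maximum of $u$. By Theorem \ref{th2.4}, any $(\lambda,u)\in\mathscr{S}_{d}$ satisfies $u\gg 0$, and since $u=0$ on $\partial\Omega$, the maximum $M:=\|u\|_{\infty}$ is attained at some point $x_{0}\in\Omega$. At $x_{0}$ one has $\nabla u(x_{0})=0$ and $\Delta u(x_{0})\leq 0$, so evaluating \eqref{i.1} at $x_{0}$ the transport term vanishes and the Laplacian contribution is nonnegative. This collapses the PDE into the scalar inequality
$$0\leq -d\Delta u(x_{0})=M+\lambda M^{2}-M^{q},\qquad \text{i.e.}\qquad M^{q-1}\leq 1+\lambda M.$$
A crucial observation is that $d$ has disappeared completely from this inequality, so the resulting bound is automatically uniform in $d>0$, as demanded by the statement.

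The second step is to extract the claimed bound from $M^{q-1}\leq 1+\lambda M$. For $\lambda\leq 0$ this gives $M^{q-1}\leq 1$, hence $M\leq 1$. For $\lambda>0$, I would proceed by contradiction: assume $M>\lambda^{1/(q-2)}+1$. Using the elementary superadditivity $(a+b)^{n}\geq a^{n}+b^{n}$ for $a,b\geq 0$ and integer $n\geq 1$, applied with $n=q-2\geq 2$, $a=\lambda^{1/(q-2)}$ and $b=1$, one finds $M^{q-2}>\lambda+1$. Multiplying by $M>1$ yields $M^{q-1}>(1+\lambda)M\geq 1+\lambda M$, which contradicts the previous inequality. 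Hence $M\leq \lambda^{1/(q-2)}+1$, and one may take $C(\lambda):=1$ for $\lambda\leq 0$ and $C(\lambda):=\lambda^{1/(q-2)}+1$ for $\lambda>0$.

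There is no substantive technical obstacle here: the whole content of the lemma is the conceptual observation that evaluating at an interior maximum annihilates the parametric convection term $\lambda\langle\mathfrak{a},\nabla u\rangle$ and erases the $d$-dependence, after which the superlinear absorption provided by $-u^{q}$ with $q\geq 4$ dominates the $u+\lambda u^{2}$ contribution and produces the explicit bound. The only point worth being careful about is that the maximum is indeed interior, which is guaranteed by the strong positivity from Theorem \ref{th2.4} and the Dirichlet condition on $\partial\Omega$.
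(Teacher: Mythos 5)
Your proof is correct and follows essentially the same route as the paper: evaluate the equation at an interior maximum point $x_0$ (the paper invokes Bony's maximum principle to justify $\nabla u(x_0)=0$ and $\Delta u(x_0)\leq 0$ for $u\in W^{2,p}_0(\Omega)$, which need not be $C^2$ --- a point you should cite rather than simply assert), collapse the PDE to $M^{q-1}\leq 1+\lambda M$ with $M=\|u\|_\infty$, and then show that $1$ (for $\lambda\leq 0$) and $\lambda^{1/(q-2)}+1$ (for $\lambda>0$) bound $M$. The paper finishes by checking $P(\lambda,r(\lambda))\geq 0$ for $P(\lambda,z)=z^{q-1}-\lambda z-1$ with $r(\lambda)=\lambda^{1/(q-2)}+1$ via a binomial expansion, and your superadditivity-plus-contradiction argument is an equivalent way of doing the same elementary algebra.
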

\begin{proof}
Let $(\lambda,u)\in\mathscr{S}_{d}$ be a positive solution of \eqref{i.1}, and $x_{0}\in\Omega$ such that
$u(x_{0})=\|u\|_{\infty}$. Since $u\in W^{2,p}_{0}(\Omega)$ with $p>N$, the maximum principle of Bony
\cite{Bo} entails that $\nabla u(x_{0})=0$ and $\Delta u(x_{0})\leq 0$. Thus,
	\begin{align*}
	0\leq  -d\Delta u(x_{0}) & =\lambda \langle \mathfrak{a},\nabla u(x_{0})\rangle +u(x_{0})+\lambda u^{2}(x_{0})-u^{q}(x_{0}) \\ 	&=u(x_{0})+\lambda u^{2}(x_{0})-u^{q}(x_{0}).
	\end{align*}
Consequently, since $u(x_{0})>0$, it follows that
\begin{equation}
\label{5.2}
	u^{q-1}(x_{0})-\lambda u(x_{0})-1\leq 0.
\end{equation}
Equivalently, $P(u(x_0))\leq 0$, where $P(\l,z):=z^{q-1}-\lambda z-1$, $z\geq 0$. Since ${P}(\l,0)=-1<0$ and $\lim_{z\to\infty} {P}(\lambda,z)=+\infty$, the function ${P}(\l,\cdot)$ possesses a positive zero for every $\l\in\mathbb{R}$. Let $C(\lambda)>0$ be the minimal positive zero of ${P}(\lambda,\cdot)$ for each $\lambda\in\R$. According to \eqref{5.2}, $u(x_{0})\leq C(\lambda)$.
\par
Finally, setting $r(\lambda):=\lambda^{\frac{1}{q-2}}+1$, we have that,  for every $\l\geq 0$,
\begin{align*}
	{P}(\lambda,r(\lambda)) &= (\l^\frac{1}{q-2}+1)^{q-1} -\l (\l^\frac{1}{q-2}+1) -1
  =  \sum_{i=0}^{q-1} \left(\!\!\! \begin{array}{c} q-1\\ i \end{array}\!\!\! \right)
 \l^\frac{i}{q-2}-\l^\frac{q-1}{q-2} -\l-1 \\ & =
 \left(\!\!\! \begin{array}{c} q-1\\ q-2 \end{array}\!\!\! \right)\l +
 \sum_{i=0}^{q-3} \left(\!\!\! \begin{array}{c} q-1\\ i \end{array}\!\!\! \right)
 \l^\frac{i}{q-2} -\l  =
	(q-2)\lambda+\sum_{i=1}^{q-3}\binom{q-1}{i}\lambda^{\frac{i}{q-2}}\geq 0
\end{align*}
and hence $C(\lambda)\leq r(\lambda)$. This ends the proof if $\l\geq 0$. When $\l<0$, we have that
$$
  {P}(\l,1)=-\l>0,\qquad \frac{d P}{d z}(\l,z)=(q-1)z^{q-2}-\l >0,
$$
for all $z\geq 0$ and $\l<0$. Thus, $C(\l)\leq 1$ if $\l<0$. This ends the proof.
\end{proof}

The next result provides us with a necessary condition for the existence of positive solutions when $d\geq \s_{1}^{-1}$.

\begin{lemma}
\label{le5.2}
Suppose $d\geq\s_{1}^{-1}$ and \eqref{i.1} admits a positive solution. Then,  $\lambda >0$.
\end{lemma}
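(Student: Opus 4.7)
My plan is to test the equation satisfied by a positive solution $u$ against the principal eigenfunction $\v_0$ of $-\D$ in $\Omega$ under Dirichlet boundary conditions, after first eliminating the convective term. Since $\lambda\langle \mathfrak{a},\nabla u\rangle$ would yield an unwieldy first-order integral when one tests \eqref{i.1} directly against $\v_0$, I would begin by applying the change of variables $u=\zeta(\lambda,d,x)v$ from \eqref{2.7}, which puts \eqref{i.1} into the self-adjoint form \eqref{2.8}, namely
\begin{equation*}
   -d\Delta v = \Bigl(1-\tfrac{\lambda^{2}|\mathfrak{a}|^{2}}{4d}\Bigr) v + \lambda \zeta v^{2} - \zeta^{q-1}v^{q} \quad \text{in } \Omega,
\end{equation*}
with $v=0$ on $\partial\Omega$. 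By Theorem \ref{th2.4}, the assumption $u\gg 0$ forces $v\gg 0$ in $\Omega$.

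Next, I would multiply this transformed equation by $\v_0$, integrate over $\Omega$, and use $-\D \v_0=\sigma_1\v_0$ with the vanishing Dirichlet data (so the boundary terms in the integration by parts drop out) to derive the identity
\begin{equation*}
 \Bigl(d\sigma_{1}-1+\tfrac{\lambda^{2}|\mathfrak{a}|^{2}}{4d}\Bigr)\int_{\Omega}v\v_{0}\, dx
 = \lambda \int_{\Omega} \zeta v^{2}\v_{0}\, dx - \int_{\Omega} \zeta^{q-1} v^{q}\v_{0}\, dx.
\end{equation*}
Under the standing hypothesis $d\geq \sigma_1^{-1}$, the bracket $d\sigma_1-1+\lambda^{2}|\mathfrak{a}|^{2}/(4d)$ is non-negative, and since $v,\v_0\gg 0$ we have $\int_{\Omega} v\v_0\, dx>0$; thus the left hand side is non-negative.

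Arguing by contradiction, suppose $\lambda\leq 0$. Then $\lambda\int_{\Omega}\zeta v^{2}\v_{0}\, dx\leq 0$, while $-\int_{\Omega}\zeta^{q-1}v^{q}\v_{0}\, dx$ is \emph{strictly} negative because $\zeta>0$ in $\bar\Omega$ and $v,\v_0\gg 0$; hence the right hand side of the identity is strictly negative. This contradicts the non-negativity of the left hand side and forces $\lambda>0$. I do not anticipate any serious obstacle: the only subtle point is the preliminary change of variables \eqref{2.7}, which is essential both to make the convection term disappear and to preserve the positivity of $\zeta$ used in the sign analysis. Note that the threshold $d=\sigma_{1}^{-1}$ is exactly the value at which the linear coefficient $d\sigma_{1}-1$ vanishes, so the non-negative contribution $\lambda^{2}|\mathfrak{a}|^{2}/(4d)$ plays no role in that critical case; yet the strict negativity of the $-\zeta^{q-1}v^{q}$ term still secures the contradiction.
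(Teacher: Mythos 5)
Your proof is correct, but it follows a different route from the paper's. The paper does not change variables at all: it multiplies \eqref{i.1} by the solution $u$ itself and integrates over $\Omega$, observing that the convective term vanishes because $\int_{\Omega} u\,\partial_{x_i}u\,dx=\tfrac12\int_{\Omega}\partial_{x_i}(u^{2})\,dx=0$ under the Dirichlet condition, and then invokes the Courant (variational) characterization $d\s_1\int_\Omega u^2\,dx\leq d\int_\Omega|\nabla u|^2\,dx$ to arrive at $0\leq(d\s_1-1)\int_\Omega u^2\,dx\leq\int_\Omega(\lambda-u^{q-2})u^3\,dx$, which forces $\lambda>0$. You instead eliminate the convection by the exponential change of variables \eqref{2.7} and test the self-adjoint equation against the principal eigenfunction $\v_0$, using $-\D\v_0=\s_1\v_0$ and Green's formula (both boundary terms vanish since $v=\v_0=0$ on $\p\O$); the sign analysis then works exactly as you describe, and it suffices to know $v>0$ in $\O$, which follows from $u>0$ and $\zeta>0$ without even invoking Theorem \ref{th2.4}. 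Each approach has a small advantage: the paper's is more elementary in that it avoids both the change of variables and the eigenfunction, needing only the variational inequality, while yours produces the slightly sharper identity with the extra non-negative contribution $\tfrac{\lambda^{2}|\mathfrak{a}|^{2}}{4d}$ on the left, which quantifies how far $\l$ must be from $0$ and is in the same spirit as the eigenvalue-comparison argument the paper uses later in Lemma \ref{le5.3}(ii).
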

\begin{proof}
Let $(\lambda,u)\in\mathscr{S}_{d}$. Then, multiplying the differential equation by $u$ and integrating yields
\begin{equation*}
	\int_{\Omega}(-d\Delta u) u \, dx=\lambda \sum_{i=1}^{N} a_{i} \int_{\Omega} \frac{\partial u}{\partial x_{i}} u \ dx+\int_{\Omega}u^{2} \, dx +\lambda \int_{\Omega}u^{3} \, dx-\int_{\Omega}u^{q+1} \, dx.
\end{equation*}
Thus, integrating by parts in $\O$, it follows from the Courant characterization of $\s_1$ that
\begin{equation*}
	d\s_{1}\int_{\Omega}u^{2} \, dx \leq d\int_{\Omega}|\nabla u|^{2} \, dx=\int_{\Omega}u^{2} \, dx+\int_{\Omega}(\lambda-u^{q-2})u^{3} \, dx.
\end{equation*}
Consequently, since $1\leq d\s_{1}$, it is apparent that
\begin{equation*}
	0\leq (d\s_{1}-1)\int_{\Omega}u^{2} \ dx\leq \int_{\Omega}(\lambda-u^{q-2})u^{3} \ dx,
\end{equation*}
and therefore, $\lambda> 0$. This concludes the proof.
\end{proof}

The following result shows that $\mc{P}_\l (\mathscr{S}_d)$ is bounded, where $\mc{P}_\l$ stands for the $\l$-projection operator defined by $\mc{P}_\l(\l,u)=\l$ for all
$\l\in\R$ and $u\in W^{2,p}_0(\O)$. Part (ii) straightens Lemma \ref{le5.2}.

\begin{lemma}
\label{le5.3}
The following assertions are true:
	\begin{enumerate}
		\item[{\rm (i)}]  Suppose $d\leq \s_{1}^{-1}$. Then, there exists a constant $C_0(d)>0$ such that \begin{equation}
\label{v.3}
		-\tfrac{2}{|\mathfrak{a}|}\sqrt{d(1-\s_{1}d)}\leq\lambda\leq  C_0(d)\quad
\hbox{for all}\;\; (\l,u)\in\mathscr{S}_d.
\end{equation}
		\item[{\rm (ii)}] Suppose $d > \s_{1}^{-1}$. Then, there are two constants,
$0<C_1(d)<C_2(d)$, such that
\begin{equation}
\label{v.4}
		C_1(d) \leq \lambda \leq  C_2(d) \quad \hbox{for all}\;\; (\l,u)\in\mathscr{S}_d.
\end{equation}
Moreover, $C_{1}(d), C_{2}(d)\to \infty$ as $d\to\infty$.
\end{enumerate}
\end{lemma}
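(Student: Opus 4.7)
The plan is to test the transformed problem \eqref{2.8}, where $v=\zeta^{-1}u>0$, against $v$ itself, use Poincar\'e's inequality to put the principal eigenvalue $\sigma_1$ into the picture, and then exploit the pointwise identity $\zeta v=u$ to convert the resulting $L^2$--identity into an algebraic inequality involving the $L^\infty$--bound of Lemma \ref{le5.1}. The whole statement will then follow by elementary analysis of that inequality.

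Concretely, I would first multiply \eqref{2.8} by $v$, integrate, and invoke $\int_\Omega |\nabla v|^2\,dx\geq \sigma_1\int_\Omega v^2\,dx$ to derive the master inequality
\begin{equation*}
\left(d\sigma_{1}-1+\tfrac{\lambda^{2}|\mathfrak{a}|^{2}}{4d}\right)\int_{\Omega}v^{2}\,dx \;\leq\; \lambda\int_{\Omega}\zeta v^{3}\,dx-\int_{\Omega}\zeta^{q-1}v^{q+1}\,dx. \tag{$\star$}
\end{equation*}
For the lower bound in \textbf{(i)}, suppose by contradiction that $\lambda\leq 0$. Then the right-hand side of $(\star)$ is strictly negative (the first term is nonpositive, the second strictly negative), which forces the parenthesis on the left to be negative, i.e.\ $\lambda^{2}|\mathfrak{a}|^{2}/(4d)<1-d\sigma_{1}$. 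This yields $\lambda>-\lambda_{1}(d)$, and combined with the trivial case $\lambda>0$, the full lower bound $\lambda\geq -\lambda_{1}(d)$ follows.

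For the upper bounds, I would use the identity $\zeta v=u$ to write $\int_\Omega\zeta v^{3}\,dx\leq \|u\|_{\infty}\int_\Omega v^{2}\,dx$. Dropping the (negative) superlinear term in $(\star)$ and dividing by $\int_\Omega v^{2}\,dx$ gives
\begin{equation*}
d\sigma_{1}-1+\tfrac{\lambda^{2}|\mathfrak{a}|^{2}}{4d}\;\leq\;\lambda\|u\|_{\infty}\;\leq\;\lambda\bigl(\lambda^{1/(q-2)}+1\bigr),
\end{equation*}
where the last bound is Lemma \ref{le5.1}. Rearranging, any admissible $\lambda>0$ must satisfy
\begin{equation*}
\tfrac{\lambda^{2}|\mathfrak{a}|^{2}}{4d}-\lambda^{(q-1)/(q-2)}-\lambda \;\leq\; 1-d\sigma_{1}. \tag{$\sharp$}
\end{equation*}
In case \textbf{(i)}, the right-hand side of $(\sharp)$ is nonnegative, whereas, since $(q-1)/(q-2)<2$ for $q\geq 4$, its left-hand side tends to $+\infty$ as $\lambda\uparrow\infty$; hence $(\sharp)$ confines $\lambda$ to a bounded set, and I would take $C_{0}(d)$ to be the largest positive root.

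In case \textbf{(ii)}, the right-hand side of $(\sharp)$ is $1-d\sigma_{1}<0$, so $(\sharp)$ reads $f(\lambda):=\lambda^{(q-1)/(q-2)}+\lambda-\lambda^{2}|\mathfrak{a}|^{2}/(4d)\geq d\sigma_{1}-1>0$. Since $f(0)=0$ and $f(\lambda)\to -\infty$ as $\lambda\to\infty$, this determines a bounded interval $[C_{1}(d),C_{2}(d)]$ of admissible $\lambda$'s. Dropping the negative $\lambda^{2}$--term yields $\lambda^{(q-1)/(q-2)}+\lambda\geq d\sigma_{1}-1$ for every such $\lambda$, and since the right-hand side tends to $+\infty$ as $d\uparrow\infty$, any admissible $\lambda$ must diverge, giving $C_{1}(d)\to\infty$ and, a fortiori, $C_{2}(d)\geq C_{1}(d)\to\infty$. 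The main obstacle I expect is Step~$(\star)$: one has to keep the $L^2$ identity tight enough that the crucial $\lambda^2/d$ term from the Girsanov-type conjugation $\zeta$ survives, while still bounding the cubic remainder purely in terms of $\|u\|_\infty$; once this balance is struck, the remaining steps are straightforward real-variable analysis.
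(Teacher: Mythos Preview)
Your proof is correct and reaches exactly the same algebraic inequality $(\sharp)$ as the paper, so the downstream analysis is identical. The only methodological difference lies in how you derive $(\sharp)$: you test the transformed equation \eqref{2.8} against $v$ and invoke the Poincar\'e inequality $\int_\Omega|\nabla v|^2\,dx\geq\sigma_1\int_\Omega v^2\,dx$, whereas the paper observes that, since $v>0$ solves $-\Delta v - f_d(\lambda,x,v)v=\mu v$, the number $\mu=\tfrac{1}{d}(1-\tfrac{\lambda^2|\mathfrak{a}|^2}{4d})$ must equal the principal eigenvalue $\sigma_1[-\Delta-f_d]$, and then uses monotonicity of the principal eigenvalue in the potential. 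Your route is slightly more elementary (no Krein--Rutman type input), while the paper's spectral argument is perhaps more conceptual; both collapse to the same bound once one estimates $f_d\leq d^{-1}\lambda\|u\|_\infty$ via $\zeta v=u$.

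Two minor points: first, the set of admissible $\lambda$ in case~(ii) need not be an \emph{interval} as you assert---you only need $C_1(d):=\inf$ and $C_2(d):=\sup$ of that set, which is how the paper proceeds. Second, you should handle the possibility $\mathscr{S}_d=\emptyset$ for some $d>\sigma_1^{-1}$ (then the statement holds vacuously for any choice of constants); the paper does this explicitly.
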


\begin{proof}
Let $(\lambda,u)\in\mathscr{S}_{d}$. Then, $v(x):=e^{\frac{\lambda}{2d}\langle \mathfrak{a},x\rangle}u(x)$, $x\in\bar \Omega$, satisfies $v=0$ on $\p\O$ and
\begin{equation}
\label{v.5}
	-\Delta v=\tfrac{1}{d}\left(1-\tfrac{\lambda^{2} |\mathfrak{a}|^{2}}{4d}\right)v+f_{d}(\lambda,x,v)v
\quad \hbox{in}\;\; \Omega,
\end{equation}
where $f_{d}$ is defined as in the proof of Proposition \ref{pr4.4}.
Since $v>0$, by the uniqueness of the principal eigenvalue it is apparent that
\begin{equation}
	\label{v.6}
	\sigma_{1}[-\Delta- f_{d}(\lambda,x,v),\Omega]= \tfrac{1}{d}\left(1-\tfrac{\lambda^{2} |\mathfrak{a}|^{2}}{4d}\right).
\end{equation}
Suppose that $d\leq \s_{1}^{-1}$ and $\l\geq 0$. Then, by Lemma \ref{le5.1},
\begin{equation*}
	f_{d}(\lambda,x,v)\leq d^{-1}\lambda\zeta(\lambda,d,x)v=d^{-1}\lambda u\leq d^{-1}\l C(\l).
\end{equation*}
Thus, by the monotonicity of the principal eigenvalue with respect to the potential,
\begin{equation*}
	\tfrac{1}{d}\left(1-\tfrac{\lambda^{2} |\mathfrak{a}|^{2}}{4d}\right)\geq
      \s_{1}-d^{-1}\lambda C(\lambda).
\end{equation*}
Therefore, rearranging terms yields
\begin{equation}
	\label{v.7}
	 \Gamma(\l):= \tfrac{\l^{2}|\mathfrak{a}|^{2}}{4d^2}-d^{-1}\l(1+\l^{\frac{1}{q-2}})\leq d^{-1}-\s_{1}.
\end{equation}
Let $\Lambda_{d}$ be the set of $\l\geq 0$ satisfying \eqref{v.7}. Clearly $\Lambda_{d}$ is a closed connected subset of $[0,\infty)$. As $\Gamma(\l)<0$ for sufficiently small $\l>0$, there exists $\varepsilon>0$ such that $[0,\varepsilon)\subset \Lambda_{d}$. On the other hand, $\lim_{\l\uparrow \infty}\Gamma(\l)=+\infty$. This shows Part (i) when $\l\geq 0$.
Now, suppose  $\l<0$. Then, $f_{d}(\lambda,x,v)\leq 0$ and, hence, \eqref{v.6} implies that
	\begin{equation*}
	\tfrac{1}{d}\left(1-\tfrac{\lambda^{2} |\mathfrak{a}|^{2}}{4d}\right)\geq \s_{1},
	\end{equation*}
which provides us with the lower estimate of \eqref{v.3} and ends the proof Part (i).
\par
Now, suppose $d>\s_{1}^{-1}$. According to Lemma \ref{le5.2}, \eqref{i.1} cannot admit a positive solution
if $\l \leq 0$. Thus, we can assume $\l>0$. By adapting the argument of the proof of Part (i),
it becomes apparent that $\Gamma(\l)\leq d^{-1}-\s_{1}<0$ if $(\l,u)\in\mathscr{S}_d$. As above, let $\Lambda_d$ denote the set of $\l>0$ satisfying $\Gamma(\l)\leq d^{-1}-\s_{1}$. If $\Lambda_{d}=\emptyset$, then the conclusion follows by choosing $C_{1}(d)=C_{2}(d)=d>0$. Suppose $\Lambda_{d}$ is non-empty.
In such case, since $\Gamma(\l)\uparrow 0$ as $\l\downarrow 0$, $(0,\varepsilon)\cap\Lambda_{d}=\emptyset$ for some $\varepsilon>0$. Thus, $\min \Lambda_{d}>0$ and we can choose $C_{1}(d)=\min \Lambda_{d}$. On the other hand, as $\lim_{\l\ua\infty}\Gamma(\l)=+\infty$ and $d^{-1}-\s_{1}<0$, it follows that $\max\Lambda_{d}<\infty$ and we choose $C_{2}(d)=\max\Lambda_{d}$. This shows \eqref{v.4}. Since $C_{1}(d), C_{2}(d)\in \Lambda_{d}$, it follows that
\begin{equation*}
0<\sigma_{1}d-1\leq C_{i}(d)[1+C_{i}(d)]^{\frac{1}{q-2}}, \quad i\in\{1,2\}.
\end{equation*}
This shows $C_{1}(d), C_{2}(d)\to \infty$ as $d\to\infty$. The proof is complete.
\end{proof}

Subsequently,  for any given compact interval $J$ of $\R$, we denote by $\mathscr{S}_{d}(J)$ the set of positive solutions $(\l,u)\in\mathscr{S}_d$ with $\l\in J$. The next result provides us with uniform a priori bounds for these subsets of $\mathbb{R}\times W^{2,p}_{0}(\Omega)$.

\begin{theorem}
\label{th5.4}
For any compact interval  $J\subset \mathbb{R}$, there is a constant $C(J,d)>0$ such that
\begin{equation}
\label{v.8}
	\sup_{(\l,u)\in \mathscr{S}_{d}(J)}\|u\|_{W^{2,p}}\leq C(J,d).
\end{equation}
\end{theorem}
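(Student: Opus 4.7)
The plan is to bootstrap from the uniform $L^\infty$ bound already in hand to a uniform $W^{2,p}$ bound via standard elliptic regularity. First I would combine Lemma \ref{le5.1} with the compactness of $J$: since $C(\l)$ is continuous in $\l$ (as the minimal positive root of $P(\l,z)=z^{q-1}-\l z-1$, it depends continuously on $\l$ by the implicit function theorem away from double roots, and the piecewise bound in \eqref{v.1} is anyway uniformly bounded on compacta), there is a constant $M=M(J)$ such that $\|u\|_{\infty}\leq M$ for every $(\l,u)\in\mathscr{S}_{d}(J)$.

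Next I would rewrite the PDE as a linear equation with bounded coefficients and bounded right-hand side. Namely, for $(\l,u)\in\mathscr{S}_{d}(J)$,
\begin{equation*}
-d\Delta u-\l\langle\mathfrak{a},\nabla u\rangle = u+\l u^{2}-u^{q}\equiv h(\l,u).
\end{equation*}
Since $\l\in J$ and $\|u\|_{\infty}\leq M$, one has $\|h(\l,u)\|_{L^{\infty}(\Omega)}\leq M+|J|_{\max}M^{2}+M^{q}=:K(J)$, where $|J|_{\max}:=\max_{\l\in J}|\l|$. As $\Omega$ is bounded, $\|h(\l,u)\|_{L^{p}(\Omega)}\leq K(J)|\Omega|^{1/p}$ uniformly in $(\l,u)\in\mathscr{S}_{d}(J)$.

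Now I would apply the standard interior-plus-boundary $L^{p}$ estimate of Agmon--Douglis--Nirenberg (see, e.g., Theorem 9.13 in Gilbarg--Trudinger) to the linear uniformly elliptic operator
\begin{equation*}
\mathcal{A}_{\l}v:=-d\Delta v-\l\langle\mathfrak{a},\nabla v\rangle
\end{equation*}
on $W^{2,p}_{0}(\Omega)$. Since $\Omega$ is of class $\mathcal{C}^{2}$, the leading coefficient $d$ is constant, and the zeroth/first-order coefficients $(0,\l\mathfrak{a})$ depend continuously on $\l\in J$, the constants in the a priori estimate can be chosen uniformly on $J$. Hence, there exists $\tilde C(J,d)>0$ with
\begin{equation*}
\|u\|_{W^{2,p}(\Omega)}\leq \tilde C(J,d)\bigl(\|\mathcal{A}_{\l}u\|_{L^{p}(\Omega)}+\|u\|_{L^{p}(\Omega)}\bigr)
\end{equation*}
for every $\l\in J$ and every $u\in W^{2,p}_{0}(\Omega)$. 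Substituting $\mathcal{A}_{\l}u=h(\l,u)$ and using the already established $L^{\infty}$ bound, we obtain
\begin{equation*}
\|u\|_{W^{2,p}}\leq \tilde C(J,d)\bigl(K(J)+M\bigr)|\Omega|^{1/p}=:C(J,d),
\end{equation*}
which is exactly \eqref{v.8}.

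The only real subtlety is the uniformity of the elliptic constant $\tilde C(J,d)$ as $\l$ varies in $J$. This is standard because the coefficients $(d,\l\mathfrak{a},0)$ depend continuously on the parameter $\l$ and $J$ is compact, so the modulus of continuity and ellipticity constants entering the Calder\'on--Zygmund estimate can be chosen independently of $\l\in J$; alternatively, one could invoke the a priori estimate for each fixed $\l$ and use a compactness--contradiction argument to glue them into a single constant. Either way, no new analytic input beyond Lemma \ref{le5.1} and classical $L^{p}$ theory is required.
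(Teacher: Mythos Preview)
Your argument is correct and follows the same overall strategy as the paper---bootstrap the uniform $L^\infty$ bound of Lemma~\ref{le5.1} to a $W^{2,p}$ bound via Calder\'on--Zygmund $L^p$ estimates---but the mechanics differ in one point. The paper first performs the change of variable $v(x)=e^{\frac{\lambda}{2d}\langle\mathfrak{a},x\rangle}u(x)$, which eliminates the drift term and reduces the equation to one governed by the pure Dirichlet Laplacian $-\Delta$; the elliptic constant then comes from a single fixed operator and is automatically independent of $\lambda$, while the $\lambda$-dependence is absorbed into the (easily bounded) right-hand side and the exponential weight. You instead keep the drift and apply the $L^p$ estimate directly to the family $\mathcal{A}_\lambda=-d\Delta-\lambda\langle\mathfrak{a},\nabla\rangle$, which obliges you to argue that the Agmon--Douglis--Nirenberg constant is uniform over $\lambda\in J$. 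Your justification of that uniformity (continuous dependence of the coefficients on a compact parameter set) is entirely standard and valid, so nothing is lost; your route is a little more direct, while the paper's route trades the change of variable for not having to track the elliptic constant at all.
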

\begin{proof}
Let $J$ be a compact interval of $\R$ and pick  $(\lambda,u)\in\mathscr{S}_{d}(J)$. We already know that
the function $v(x):=e^{\frac{\lambda}{2d}\langle \mathfrak{a},x\rangle}u(x)$, $x\in\O$, satisfies $v=0$ on $\p\O$ and \eqref{v.5}. By the invertibility of the operator $-\D :W^{2,p}_0(\O)\to L^p(\O)$, based on a
classical inequality of Calder\'{o}n and Zygmund \cite{CZ} (see \cite[Ch. 9]{GT}, if necessary), there exists a constant $M>0$, depending only on $p$ and $N$ (the spatial dimension), such that
\begin{equation}
\label{v.9}
	\|v\|_{W^{2,p}}\leq M \left( \tfrac{1}{d}\left|1-\tfrac{\lambda^{2} |\mathfrak{a}|^{2}}{4d}\right|\|v\|_{L^{p}}+\|f_{d}(\l,x,v)v\|_{L^{p}}\right).
\end{equation}
Since $J$ and $\bar \Omega$ are compact, the function $e^{\frac{\lambda}{2d}\langle \mathfrak{a},x\rangle}$ is uniformly bounded on $(\l,x)\in J\times\overline{\Omega}$ for each $d>0$. Subsequently, we set
\begin{equation*}
	E(d,J):=\max_{(\lambda,x)\in J\times \overline{\Omega}}e^{\frac{\lambda}{2d}\langle \mathfrak{a},x\rangle}, \qquad     L(J):=\max_{\l\in J}|\l|.
\end{equation*}
Then, thanks to Lemma \ref{le5.1}, we have that
\begin{equation}
\label{v.10}
	\|v\|_{L^{\infty}}\leq E(d,J)\|u\|_\infty \leq E(d,J)(|\l|^{\frac{1}{q-2}}+1)\leq E(d,J)(L(J)^{\frac{1}{q-2}}+1),
\end{equation}
Thus, by the definition of $f_d$,  there exists a constant $F(d,J)>0$ such that
\begin{equation}
\label{v.11}
	\|f_{d}(\lambda,x,v)v\|_\infty \leq F(d,J).
\end{equation}
Combining \eqref{v.9} with \eqref{v.10} and \eqref{v.11}, the estimate \eqref{v.8} readily follows.
\end{proof}

\section{A priori bounds for the negative Solutions}

\noindent To get a priori bounds for the negative solutions is a more delicate issue, as it depends
on whether the exponent $q\geq 4$ is odd or even. Actually, the global structure of the set of negative solutions of \eqref{i.1} changes, very substantially, in these two cases, as it will become apparent later. As a  consequence, these two cases will be treated separately.

\subsection{$q\geq 4$ is an odd integer} For any given $d>0$, we will denote by  $\mathscr{N}_{d}$ the set of negative solutions of \eqref{i.1}, i.e.,
\begin{equation*}
\mathscr{N}_{d}:=\{(\l,u)\in\mathfrak{F}_{d}^{-1}(0): \;u\ll 0\}\subset \mathbb{R}\times W^{2,p}_{0}(\Omega).
\end{equation*}
Setting $v=-u$, it becomes apparent that the negative solutions of \eqref{i.1} are given by the positive solutions of
\begin{equation}
\label{vi.1}
\left\{ \begin{array}{ll} -d\Delta v=\lambda \langle \mathfrak{a},\nabla v\rangle+v-\lambda v^{2}-v^{q}
 & \quad \hbox{in}\;\; \Omega,\\ v=0& \quad \hbox{on}\;\; \partial\Omega.\end{array}\right.
\end{equation}
Based on the fact that, much like in \eqref{i.1},  the dominant term at $v=+\infty$ is $-v^q$, the set of positive solutions of \eqref{vi.1} satisfies similar properties as
the set of positive solutions of \eqref{i.1} already analysed in Section 5. As the proofs can be easily adapted, to avoid repetitions we will restrict ourselves to state the corresponding results without proofs. The next results provide us with counterparts of Lemmas \ref{le5.1}, \ref{le5.2}, \ref{le5.3} and Theorem
\ref{th5.4}, respectively.

\begin{comment}
\begin{lemma}
	 There are no negative solutions $(\lambda,u)\in\mathcal{N}$ if $\lambda\geq0$.
\end{lemma}
\begin{proof}
	Let $(\lambda,u)\in\mathcal{N}$ be a negative solution. Then $v=-u$ is a positive solution of equation \eqref{vi.1}. Multiplying the equation with $v$ and integrating, we obtain the identity
	\begin{equation*}
	\int_{\Omega}(-\Delta v) v \ dx=\lambda \sum_{i=1}^{N} a_{i} \int_{\Omega} \frac{\partial v}{\partial x_{i}} v \ dx+\int_{\Omega}v^{2} \ dx -\lambda \int_{\Omega}v^{3} \ dx-\int_{\Omega}v^{q+1} \ dx.
	\end{equation*}
	Integrating by parts, we arrive to
	\begin{equation*}
	\int_{\Omega}|\nabla v|^{2} \ dx=\int_{\Omega}v^{2} \ dx-\int_{\Omega}(v^{q-2}+\lambda)v^{3} \ dx.
	\end{equation*}
	From Poincaré inequality we deduce that
	\begin{equation*}
	\int_{\Omega}v^{2} \ dx\leq \int_{\Omega}v^{2} \ dx-\int_{\Omega}(v^{q-2}+\l)v^{3} \ dx.
	\end{equation*}
	Hence, from inequality
	\begin{equation*}
	\int_{\Omega}(\lambda+v^{q-2})v^{3} \ dx\leq 0,
	\end{equation*}
	we infer that $\lambda<0$. This concludes the proof.
\end{proof}
\end{comment}

\begin{lemma}
\label{le6.1}
There exists a real valued function $C:\mathbb{R}\to(0,+\infty)$ such that, for every
$d>0$ and  $(\lambda,u)\in\mathscr{N}_{d}$,
\begin{equation}
\label{v.2}
	\|u\|_{\infty}\leq C(\l)\leq \left\{\begin{array}{ll} 1-\lambda^{\frac{1}{q-2}} & \quad \hbox{if}\;\; \lambda\leq 0, \\ 1 & \quad \hbox{if}\;\; \lambda > 0. \end{array}\right.
\end{equation}
\end{lemma}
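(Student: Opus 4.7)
The plan is to follow the scheme of Lemma \ref{le5.1} almost verbatim, after reducing to positive solutions. Given $(\lambda,u)\in\mathscr{N}_d$, set $v:=-u\gg 0$; then $v\in W_0^{2,p}(\Omega)$ solves \eqref{vi.1}. Let $x_0\in\Omega$ be an interior maximum of $v$, which exists because $v>0$ in $\Omega$ and $v=0$ on $\partial\Omega$. Since $p>N$, Bony's maximum principle \cite{Bo} applies and gives $\nabla v(x_0)=0$ and $\Delta v(x_0)\leq 0$. Plugging this into \eqref{vi.1} at $x_0$ yields
\begin{equation*}
0\leq -d\Delta v(x_0)=v(x_0)-\lambda v^{2}(x_0)-v^{q}(x_0),
\end{equation*}
and dividing by $v(x_0)>0$ gives the polynomial inequality
\begin{equation*}
Q(\lambda,v(x_0)):=v^{q-1}(x_0)+\lambda v(x_0)-1\leq 0.
\end{equation*}
Since $Q(\lambda,0)=-1<0$ and $Q(\lambda,z)\to+\infty$ as $z\to+\infty$, the function $Q(\lambda,\cdot)$ has a minimal positive zero, which I take as the definition of $C(\lambda)>0$. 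Then $\|u\|_\infty=\|v\|_\infty=v(x_0)\leq C(\lambda)$.

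It remains to verify the two explicit dominations. When $\lambda>0$, a direct evaluation gives $Q(\lambda,1)=\lambda>0$, so $C(\lambda)\leq 1$. When $\lambda\leq 0$, since $q\geq 5$ is odd and hence $q-2$ is odd, the real number $r(\lambda):=1-\lambda^{1/(q-2)}\geq 1$ is well-defined. Writing $s:=-\lambda\geq 0$ and expanding $(1+s^{1/(q-2)})^{q-1}$ by the binomial theorem, I isolate the terms of index $i=0$, $i=q-2$ and $i=q-1$, which exactly cancel the constant term, the $-\lambda r(\lambda)$ contribution and the crossed $s^{(q-1)/(q-2)}$ term, leaving
\begin{equation*}
Q(\lambda,r(\lambda))=(q-2)s+\sum_{i=1}^{q-3}\binom{q-1}{i}s^{i/(q-2)}\geq 0.
\end{equation*}
Hence $C(\lambda)\leq r(\lambda)=1-\lambda^{1/(q-2)}$, which is precisely the claimed bound. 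There is no serious obstacle: the only point that must be checked carefully is the oddness of $q-2$, which guarantees the real $(q-2)$-th root used in $r(\lambda)$ is unambiguously defined for $\lambda<0$; this is exactly where the restriction that $q$ be odd in this subsection plays its role.
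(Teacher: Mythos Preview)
Your proof is correct and follows exactly the approach the paper intends: the paper explicitly states that Lemma~\ref{le6.1} is proved by adapting the proof of Lemma~\ref{le5.1} via the substitution $v=-u$, and you carry this out precisely, including the binomial computation which mirrors the one given for Lemma~\ref{le5.1}. Your remark on the oddness of $q-2$ making $\lambda^{1/(q-2)}$ well-defined for $\lambda<0$ is the only point genuinely specific to this subsection, and you identify it correctly.
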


\begin{comment}
\begin{proof}
		Let $(\lambda,u)\in\mathcal{N}$ be a negative solution. Then $v=-u$ is a positive solution of equation \eqref{vi.1}. Let $x_{0}\in\Omega$ be the maximum of $v$, i.e.
	\begin{equation*}
	v(x_{0})=\|v\|_{\infty}.
	\end{equation*}
	Since $v\in \mathcal{C}^{1}(\overline{\Omega})$ and is almost everywhere $\mathcal{C}^{2}$, we infer
	\begin{equation*}
	\nabla v(x_{0})=0, \quad -\Delta v(x_{0})\geq 0.
	\end{equation*}
	Hence, it follows that
	\begin{equation*}
	0\leq -\Delta v(x_{0})=\lambda \mathfrak{a}\cdot\nabla v(x_{0})+v(x_{0})-\lambda v^{2}(x_{0})-v^{q}(x_{0})=v(x_{0})-\lambda v^{2}(x_{0})-v^{q}(x_{0}).
	\end{equation*}
	Therefore, since $v(x_{0})>0$,
	\begin{equation}
	v^{q-1}(x_{0})+\lambda v(x_{0})-1\leq 0.
	\end{equation}
	Since $\lambda<0$, if we set $d=-\lambda$, it follows that $d>0$ and
	\begin{equation}
	v^{q-1}(x_{0})- d v(x_{0})-1\leq 0.
	\end{equation}
	Now, the proof follows \textit{mutantis mutandis} from the proof of Lemma \ref{th5.1}.
\end{proof}
\end{comment}

\begin{lemma}
\label{le6.2}
$\lambda <0$ if $d\geq\s_{1}^{-1}$ and \eqref{i.1} admits a negative solution.
\end{lemma}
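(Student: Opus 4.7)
The plan is to mimic the proof of Lemma \ref{le5.2}, but working with the equivalent problem \eqref{vi.1} satisfied by $v:=-u$, where $u\ll 0$ is the hypothetical negative solution of \eqref{i.1}. Since $u\ll 0$, Theorem \ref{th2.4} ensures that $v\gg 0$, so $v\in W^{2,p}_0(\Omega)$ is a strongly positive solution of \eqref{vi.1}.

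The key step is a testing/integration-by-parts argument. Multiplying the equation \eqref{vi.1} by $v$ and integrating over $\Omega$ yields
\begin{equation*}
d\int_\Omega |\nabla v|^2\,dx = \lambda\int_\Omega \langle \mathfrak{a},\nabla v\rangle v\,dx +\int_\Omega v^2\,dx-\lambda\int_\Omega v^3\,dx-\int_\Omega v^{q+1}\,dx,
\end{equation*}
after an integration by parts in the Laplacian term. The convection integral vanishes because
\begin{equation*}
\int_\Omega \langle \mathfrak{a},\nabla v\rangle v\,dx=\tfrac{1}{2}\int_\Omega \langle \mathfrak{a},\nabla(v^2)\rangle\,dx=\tfrac{1}{2}\sum_{i=1}^N a_i \int_{\partial\Omega} v^2 n_i\,dS=0,
\end{equation*}
thanks to the Dirichlet boundary condition. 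Combining this with the Courant variational characterization $\sigma_1\int_\Omega v^2\,dx\leq \int_\Omega|\nabla v|^2\,dx$ gives
\begin{equation*}
(d\sigma_1-1)\int_\Omega v^2\,dx \leq -\lambda\int_\Omega v^3\,dx-\int_\Omega v^{q+1}\,dx.
\end{equation*}

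To conclude, I use the hypothesis $d\geq \sigma_1^{-1}$, i.e., $d\sigma_1-1\geq 0$, together with the strong positivity $v\gg 0$. Indeed, the left hand side is non-negative, while $\int_\Omega v^{q+1}\,dx>0$; hence $-\lambda\int_\Omega v^3\,dx$ must be strictly positive. Since $\int_\Omega v^3\,dx>0$, this forces $\lambda<0$, as claimed. No real obstacle arises here: the argument is a direct mirror of the one given for Lemma \ref{le5.2}, with the only subtlety being the sign change in the $\lambda u^2$ term of \eqref{i.1} when passing to \eqref{vi.1}, which is precisely what flips the conclusion from $\lambda>0$ to $\lambda<0$.
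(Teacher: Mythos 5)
Your argument is correct and is precisely the adaptation of the proof of Lemma \ref{le5.2} to the transformed problem \eqref{vi.1} that the paper has in mind (it omits the proof of Lemma \ref{le6.2}, stating it as an easy adaptation): multiply by $v=-u\gg 0$, integrate by parts so the convection term drops out, use the Courant characterization of $\s_1$, and read off the sign of $\l$. The only point worth making explicit is that passing from \eqref{i.1} to \eqref{vi.1} via $v=-u$ uses that $q$ is odd, which is the standing hypothesis of Section 6.1 where this lemma lives.
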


\begin{lemma}
\label{le6.3}
The following assertions are true:
	\begin{enumerate}
		\item[{\rm (i)}]  Suppose $d\leq \s_{1}^{-1}$. Then, there exists a constant $C_0(d)>0$ such that \begin{equation}
\label{vi.3}
		-C_0(d) \leq\lambda\leq  \tfrac{2}{|\mathfrak{a}|}\sqrt{d(1-\s_{1}d)} \quad
\hbox{for all}\;\; (\l,u)\in\mathscr{N}_d.
\end{equation}
		\item[{\rm (ii)}] Suppose $d > \s_{1}^{-1}$. Then, there are two constants,
$C_1(d)<C_2(d)<0$, such that
\begin{equation}
\label{vi.4}
		C_1(d) \leq \lambda \leq  C_2(d) \quad \hbox{for all}\;\; (\l,u)\in\mathscr{N}_d.
\end{equation}
\end{enumerate}
Moreover, $C_{1}(d), C_{2}(d)\to -\infty$ as $d\to\infty$. In particular, \eqref{i.1} cannot admit a negative solution if $d > \s_{1}^{-1}$ and $\l\geq 0$.
\end{lemma}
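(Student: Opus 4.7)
My plan is to mirror the proof of Lemma~\ref{le5.3}, exploiting the identification of $\mathscr{N}_{d}$ with the set of positive solutions of \eqref{vi.1}. For any $(\lambda,u)\in\mathscr{N}_{d}$, I set $w:=-u$, so that $w\gg 0$ solves \eqref{vi.1}, and I apply the change of variable $z(x):=e^{\frac{\lambda}{2d}\langle \mathfrak{a},x\rangle}w(x)$ already used in the proof of Proposition~\ref{pr4.4}. Since the drift in \eqref{vi.1} is identical to that in \eqref{i.1}, it absorbs the gradient term and gives, writing $\zeta(\lambda,d,x)=e^{-\frac{\lambda}{2d}\langle \mathfrak{a},x\rangle}$ so that $\zeta z=w$,
\begin{equation*}
-\Delta z=\tfrac{1}{d}\Bigl(1-\tfrac{\lambda^{2}|\mathfrak{a}|^{2}}{4d}\Bigr)z+g_{d}(\lambda,x,z)\,z\quad\text{in }\Omega,\qquad z=0\ \text{on }\partial\Omega,
\end{equation*}
with $g_{d}(\lambda,x,z):=-d^{-1}(\lambda+w^{q-2})w$. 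As $z\gg 0$, the simplicity of the principal eigenvalue yields the master identity
\begin{equation*}
\sigma_{1}[-\Delta-g_{d}(\lambda,x,z),\Omega]=\tfrac{1}{d}\Bigl(1-\tfrac{\lambda^{2}|\mathfrak{a}|^{2}}{4d}\Bigr),
\end{equation*}
which drives every subsequent estimate.

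For Part~(i), I split according to the sign of $\lambda$. When $\lambda\geq 0$, $g_{d}\leq 0$ pointwise, so monotonicity of $\sigma_{1}[\,\cdot\,,\Omega]$ in the potential forces $\sigma_{1}\leq d^{-1}(1-\lambda^{2}|\mathfrak{a}|^{2}/(4d))$; rearranging yields $\lambda\leq\lambda_{1}(d)$. When $\lambda<0$, the sign of $g_{d}$ is not definite, but the elementary estimate $g_{d}\leq d^{-1}|\lambda|w\leq d^{-1}|\lambda|(1+|\lambda|^{1/(q-2)})$ (the last inequality via Lemma~\ref{le6.1}) combined with the master identity produces
\begin{equation*}
\widetilde{\Gamma}(|\lambda|):=\tfrac{|\lambda|^{2}|\mathfrak{a}|^{2}}{4d^{2}}-d^{-1}|\lambda|\bigl(1+|\lambda|^{1/(q-2)}\bigr)\leq d^{-1}-\sigma_{1}.
\end{equation*}
Since $q\geq 4$ gives $(q-1)/(q-2)<2$, the quadratic term dominates as $|\lambda|\to+\infty$ and $\widetilde{\Gamma}(\mu)\to+\infty$, so the admissible $|\lambda|$ are confined to a bounded set, furnishing $C_{0}(d)>0$.

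For Part~(ii), Lemma~\ref{le6.2} already forces $\lambda<0$, so the preceding inequality $\widetilde{\Gamma}(|\lambda|)\leq d^{-1}-\sigma_{1}$ applies with strictly negative right-hand side. Because $\widetilde{\Gamma}(0)=0$ and $\widetilde{\Gamma}(\mu)\to+\infty$ as $\mu\to+\infty$, the level set $\Lambda_{d}:=\{\mu>0:\widetilde{\Gamma}(\mu)\leq d^{-1}-\sigma_{1}\}$ is either empty (the claim is then vacuous for any $C_{1}(d)<C_{2}(d)<0$) or a compact subset of $(0,\infty)$, in which case I define $C_{1}(d):=-\max\Lambda_{d}$ and $C_{2}(d):=-\min\Lambda_{d}$. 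As $d\to\infty$, the right-hand side $d^{-1}-\sigma_{1}\to-\sigma_{1}<0$ while for each fixed $\mu>0$ one has $\widetilde{\Gamma}(\mu)\to 0$; this forces $\min\Lambda_{d}\to+\infty$ and, a fortiori, $\max\Lambda_{d}\to+\infty$, so $C_{1}(d),C_{2}(d)\to-\infty$. The final ``in particular'' assertion is immediate from Lemma~\ref{le6.2}.

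I expect the only genuine subtlety to be the absence of a pointwise sign for $g_{d}$ when $\lambda<0$ in Part~(i), because then $g_{d}$ combines the positive term $d^{-1}|\lambda|w$ with the negative term $-d^{-1}w^{q-1}$; discarding the latter to obtain a clean upper bound and invoking Lemma~\ref{le6.1} resolves it, with the exponent comparison $(q-1)/(q-2)<2$ from $q\geq 4$ ensuring that the resulting polynomial inequality genuinely bounds $|\lambda|$.
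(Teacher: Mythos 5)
Your argument is correct and is precisely the adaptation the paper intends: it states Lemma~\ref{le6.3} without proof, remarking that the proofs of Section 5 "can be easily adapted" to the transformed problem \eqref{vi.1}, and your proof reproduces the scheme of Lemma~\ref{le5.3} (change of variables, identification of $\tfrac{1}{d}(1-\tfrac{\lambda^{2}|\mathfrak{a}|^{2}}{4d})$ with a principal eigenvalue, monotonicity in the potential, and the sup bound of Lemma~\ref{le6.1}) with the roles of the signs of $\lambda$ interchanged, exactly as the symmetry $u\mapsto -u$ dictates. The only cosmetic point is that in Part (ii), when the admissible set $\Lambda_{d}$ is empty you should fix a concrete choice such as $C_{1}(d)=C_{2}(d)=-d$ so that the assertion $C_{1}(d),C_{2}(d)\to-\infty$ holds for all large $d$, which is the same convention the paper uses in Lemma~\ref{le5.3}(ii).
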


\begin{theorem}
\label{th6.4}
For any compact interval  $J\subset \mathbb{R}$, there is a constant $C(J,d)>0$ such that
\begin{equation}
\label{vi.5}
	\sup_{(\l,u)\in \mathscr{N}_{d}(J)}\|u\|_{W^{2,p}}\leq C(J,d).
\end{equation}
\end{theorem}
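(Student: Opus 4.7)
The plan is to mimic the proof of Theorem \ref{th5.4} almost verbatim, exploiting the reduction that, for $q\geq 4$ odd, setting $v:=-u$ transforms the problem of controlling negative solutions of \eqref{i.1} into one of controlling positive solutions of \eqref{vi.1}, which has the same qualitative structure ($-v^q$ as dominant term at $v=+\infty$).

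First I would pick $(\l,u)\in \mathscr{N}_d(J)$ and let $v=-u\gg 0$, which solves \eqref{vi.1}. Then I would perform the exponential change of variable $w(x):=e^{\frac{\l}{2d}\langle \mf{a},x\rangle}v(x)$, $x\in\bar\O$, which satisfies $w=0$ on $\p\O$ and an equation of the form
\begin{equation*}
   -\Delta w = \tfrac{1}{d}\!\left(1-\tfrac{\l^2|\mf{a}|^2}{4d}\right)w + \tilde f_d(\l,x,w)\,w \quad\text{in } \O,
\end{equation*}
where $\tilde f_d$ is the analogue of $f_d$ obtained from \eqref{vi.1} (with the sign change on the quadratic term reflected in the definition of $\tilde f_d$). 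The key point is that, just as in the positive case, $\tilde f_d(\l,x,w)w$ is a polynomial expression in $w$ with coefficients depending smoothly (and boundedly) on $(\l,x)\in J\times\bar\O$.

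Next I would invoke the Calder\'on--Zygmund estimate for $-\D:W^{2,p}_0(\O)\to L^p(\O)$ to obtain, for some $M=M(p,N)>0$,
\begin{equation*}
   \|w\|_{W^{2,p}} \leq M\left(\tfrac{1}{d}\left|1-\tfrac{\l^2|\mf{a}|^2}{4d}\right|\|w\|_{L^p} + \|\tilde f_d(\l,x,w)w\|_{L^p}\right).
\end{equation*}
Since $J$ and $\bar\O$ are compact, the multiplier $e^{\frac{\l}{2d}\langle \mf{a},x\rangle}$ is uniformly bounded on $J\times\bar\O$ by some $E(d,J)>0$. Combining this with Lemma \ref{le6.1}, which supplies $\|u\|_\infty\leq C(\l)$ uniformly for $\l\in J$, yields $\|w\|_{L^\infty}\leq E(d,J)\max_{\l\in J} C(\l)$. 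The polynomial structure of $\tilde f_d$ then provides a constant $\tilde F(d,J)>0$ with $\|\tilde f_d(\l,x,w)w\|_\infty\leq \tilde F(d,J)$, hence the same $L^p$-bound after multiplying by $|\O|^{1/p}$.

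Inserting these bounds into the Calder\'on--Zygmund inequality produces a uniform $W^{2,p}$-estimate for $w$, and consequently for $v=e^{-\frac{\l}{2d}\langle \mf{a},x\rangle}w$ (again using the uniform boundedness of the exponential factor, together with bounds on its first and second derivatives on $J\times\bar\O$). Since $u=-v$, this gives \eqref{vi.5}. I do not expect any genuine obstacle here: the only point requiring care is verifying that the exponential weight and its derivatives are uniformly controlled on $J\times\bar\O$, which is immediate from compactness and the fact that $d>0$ is fixed.
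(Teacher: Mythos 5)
Your proposal is correct and follows essentially the same route as the paper: the paper omits the proof of Theorem \ref{th6.4}, stating that it is the counterpart of Theorem \ref{th5.4} obtained via the substitution $v=-u$ turning negative solutions of \eqref{i.1} into positive solutions of \eqref{vi.1}, and your argument (exponential change of variable, Calder\'on--Zygmund estimate, $L^\infty$ bound from Lemma \ref{le6.1}, uniform control of the exponential weight on $J\times\bar\O$) is exactly that adaptation.
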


\begin{comment}
\begin{proof}
	Take $K$ a compact subset of the real line and set $(\lambda,u)\in\mathcal{N}_{K}$. Then $v=-u$ is a positive solution of equation \eqref{vi.1}. Under the change of variables
	\begin{equation*}
	w(x)=e^{\frac{\lambda}{2}\mathfrak{a}\cdot x}v(x), \quad x\in\Omega\subset\mathbb{R}^{N},
	\end{equation*}
	the equation \eqref{vi.1} is transformed into
	\begin{equation*}
	-\Delta w=\left(1-\frac{\lambda^{2} |\mathfrak{a}|^{2}}{4}\right)w-f(\lambda,x,w)w, \quad x\in\Omega,
	\end{equation*}
	where $f(\lambda,x,w):=(\lambda+\zeta^{q-2}(\lambda,x)w^{q-2})\zeta(\lambda,x)w$. Now, the proof follows by the similar argument of the proof of Theorem \ref{th5.4} involving elliptic $L^{p}$-estimates.
\end{proof}
\end{comment}

\subsection{$q\geq 4$ is an even integer} In such case, the negative solutions of \eqref{i.1} are given through the change of variable $v=-u$ from the positive solutions of
\begin{equation}
\label{vi.6}
\left\{ \begin{array}{ll} -d\Delta v=\lambda \langle \mathfrak{a},\nabla v\rangle+v-\lambda v^{2}+v^{q}
 & \quad \hbox{in}\;\; \Omega,\\ v=0& \quad \hbox{on}\;\; \partial\Omega,\end{array}\right.
\end{equation}
which is a much more sophisticated problem than Problem \eqref{vi.1}, as it is of superlinear type with  dominant term at $v=+\infty$ given by $v^q$. Thus, when $N\geq 3$, the existence of a priori bounds relays on the size of $q$ with respect to the critical exponent $\frac{N+2}{N-2}$, much like in the classical papers of Gidas and Spruck \cite{GS}, \cite{GS2}, whose finding  were adapted to study a general class of superlinear
indefinite problems by Berestycki, Capuzzo-Dolcetta and Nirenberg \cite{BCN} and Amann and L\'{o}pez-G\'{o}mez \cite{ALG}. Yet \eqref{vi.6} lies outside the general scope of these papers.
\par
Note that the change of variable
$$
    v(x)=\zeta(\lambda,d,x) w(x), \quad \zeta(\lambda,d,x) = e^{-\frac{\lambda}{2d}\langle \mathfrak{a},x\rangle} ,\quad x\in\Omega,
$$
transforms the problem \eqref{vi.6} into
\begin{equation}
\label{vi.7}
\left\{ \begin{array}{ll} -\Delta w=\tfrac{1}{d}\left(1-\tfrac{\lambda^{2}|\mathfrak{a}|^{2}}{4d}\right)w-\lambda d^{-1}\zeta w^{2}+d^{-1}\zeta^{q-1} w^{q} & \quad \hbox{in} \;\; \Omega,\\ w=0 & \quad \hbox{on}\;\;\p\O,
\end{array}\right.
\end{equation}
where $\zeta\equiv \zeta(\l,d,x)$. Since, the differential equation of \eqref{vi.7} cannot be expressed in the form  $-\Delta w=\mu w+a(x)w^{q}$ for some continuous function $a(x)$, our next results are not a direct consequence of the findings of \cite{GS}, \cite{GS2}, \cite{BCN} and \cite{ALG}. However, the blowing-up techniques introduced by Gidas and Spruck \cite{GS}, \cite{GS2} can be adapted to get them, like in \cite{BCN} and \cite{ALG}.
\par
As in Section 6.1, for any given compact subinterval $J\subset\R$, $\mathscr{N}_{d}(J)$ stands for the
set of negative solutions of \eqref{i.1}.

\begin{theorem}
\label{th6.5} Suppose $q\geq 4$ is an even integer and either $N=1, 2$, or $N\geq 3$ and $ q<\frac{N+2}{N-2}$. Then, for every compact interval $J\subset \mathbb{R}$, there exists a constant $C(J,d)$ such that
\begin{equation*}
	\sup_{(\l,u)\in \mathscr{N}_{d}(J)}\|u\|_{W^{2,p}}<C(J,d).
\end{equation*}
\end{theorem}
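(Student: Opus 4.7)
\medskip

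\textbf{Proof plan.} The reduction $v=-u$ turns the result into an a priori estimate in $W^{2,p}_{0}(\Omega)$ for the positive solutions of the superlinear problem \eqref{vi.6} whose parameter $\lambda$ ranges in the compact interval $J\subset\mathbb{R}$. I would first observe that, once an a priori $L^{\infty}$-estimate
$$\sup_{(\lambda,v)\in\mathscr{N}_d(J)}\|v\|_{\infty}\leq K(J,d)$$
is available, the right-hand side of \eqref{vi.6} is uniformly bounded in $L^{\infty}(\Omega)$, and hence the Calder\'on--Zygmund $L^{p}$-estimates (exactly as invoked in the proof of Theorem \ref{th5.4}) convert this into the desired uniform bound in $W^{2,p}_{0}(\Omega)$. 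So the entire issue is to establish a uniform $L^{\infty}$ bound.

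To obtain this bound I would argue by contradiction and adapt the blowing-up technique of Gidas and Spruck \cite{GS,GS2}. Suppose there is a sequence $\{(\lambda_{n},v_{n})\}$ of positive solutions of \eqref{vi.6} with $\lambda_{n}\in J$ and $M_{n}:=\|v_{n}\|_{\infty}=v_{n}(x_{n})\to\infty$ as $n\to\infty$, where $x_{n}\in\Omega$. Choose the superlinear rescaling factor
$$\mu_{n}:=M_{n}^{-\frac{q-2}{2}},\qquad \tilde v_{n}(y):=M_{n}^{-1}v_{n}(x_{n}+\mu_{n}y),$$
defined on the rescaled domain $\Omega_{n}:=\mu_{n}^{-1}(\Omega-x_{n})$. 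A direct computation, using \eqref{vi.6}, shows that $\tilde v_{n}$ satisfies
$$-d\,\Delta \tilde v_{n}=\mu_{n}\lambda_{n}\,\langle \mathfrak{a},\nabla \tilde v_{n}\rangle+M_{n}^{-(q-1)}\tilde v_{n}-\lambda_{n}M_{n}^{-(q-2)}\tilde v_{n}^{2}+\tilde v_{n}^{q}\quad\hbox{in }\Omega_{n},$$
with $0\leq \tilde v_{n}\leq 1$ and $\tilde v_{n}(0)=1$. The hypothesis $q\geq 4$ makes every lower-order coefficient vanish as $n\to\infty$ (since $\lambda_{n}$ stays bounded on $J$, $\mu_{n}\to 0$, $M_{n}^{-(q-1)}\to 0$, and crucially $M_{n}^{-(q-2)}\to 0$).

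Next I would pass to the limit by a standard compactness-and-bootstrapping argument: interior $W^{2,p}$ and $C^{2,\alpha}$ estimates together with $0\leq \tilde v_{n}\leq 1$ yield, along a subsequence, $\tilde v_{n}\to \tilde v$ in $C^{2}_{\mathrm{loc}}$ of the limit domain. Two cases appear, distinguished by the behaviour of $d_{n}/\mu_{n}$, where $d_{n}:=\dist(x_{n},\partial\Omega)$:
\begin{enumerate}
\item[{\rm (a)}] If $d_{n}/\mu_{n}\to +\infty$, the limit domain is $\mathbb{R}^{N}$ and $\tilde v$ is a bounded non-negative classical solution of
$-d\,\Delta \tilde v=\tilde v^{q}\text{ in }\mathbb{R}^{N}$, with $\tilde v(0)=1$.
\item[{\rm (b)}] If $d_{n}/\mu_{n}$ stays bounded, after passing to a subsequence and straightening the boundary of $\Omega$ near $x_{n}$ (using the $\mathcal{C}^{2}$ regularity of $\partial\Omega$), the limit domain is a half-space $\mathbb{R}^{N}_{+}$ and $\tilde v$ is a bounded non-negative classical solution of the same equation on $\mathbb{R}^{N}_{+}$ satisfying the homogeneous Dirichlet condition on $\partial\mathbb{R}^{N}_{+}$.
\end{enumerate}
In either case, the subcritical condition $q<(N+2)/(N-2)$ (automatic when $N=1,2$) and the Liouville-type theorems of Gidas and Spruck \cite{GS,GS2} (whole-space and half-space versions) force $\tilde v\equiv 0$, contradicting $\tilde v(0)=1$. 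This proves the uniform $L^{\infty}$-bound and, via the $L^{p}$-estimate described above, yields \eqref{vi.5}.

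The main obstacle is the boundary case (b): ensuring that after flattening $\partial\Omega$ and rescaling, the straightened coefficients of the second-order part converge uniformly on compact sets to those of $-d\Delta$, so that the limit problem on $\mathbb{R}^{N}_{+}$ is exactly $-d\Delta \tilde v=\tilde v^{q}$ with Dirichlet data, to which the half-space Liouville theorem applies. A delicate but standard point here is to guarantee, via a doubling/rescaling argument of Pol\'a\v{c}ik--Quittner--Souplet type if necessary, that the blow-up point remains in the closure of the limit domain and that $\tilde v(0)=1$ survives the passage to the limit; this prevents the putative contradiction from being lost at infinity.
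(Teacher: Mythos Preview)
Your overall strategy is the same as the paper's: reduce the $W^{2,p}$ bound to an $L^\infty$ bound via Calder\'on--Zygmund, then argue by contradiction through a Gidas--Spruck blow-up and appeal to the Liouville theorems on $\mathbb{R}^N$ and on a half-space. The paper, however, first removes the drift term by passing to the transformed problem \eqref{vi.7} for $w$ and blows up there, whereas you blow up directly in \eqref{vi.6} and let the rescaled drift $\mu_n\lambda_n\langle\mathfrak{a},\nabla\tilde v_n\rangle$ disappear in the limit. Both routes work; yours is marginally more direct, while the paper's has the (harmless) side effect that the limit equation carries the positive constant $\zeta^{q-1}(\lambda_\infty,d,x_\infty)$ in front of $w^q$.

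Two small corrections. First, the scaling factor you wrote, $\mu_n=M_n^{-(q-2)/2}$, is inconsistent with the rescaled equation you derived: to get coefficient $1$ on $\tilde v_n^{\,q}$ and $M_n^{-(q-1)}$ on $\tilde v_n$ one needs $\mu_n=M_n^{-(q-1)/2}$ (this is the standard superlinear scaling, and it is exactly what the paper uses, in the guise $\nu_n^{s}$ with $\nu_n=M_n^{2-q}$, $s=\tfrac{q-1}{2(q-2)}$). Second, in the boundary case you do not need a Pol\'a\v cik--Quittner--Souplet doubling step: since $x_n$ is a global maximum, the uniform $C^1$ bound on $\tilde v_n$ plus the mean value theorem already gives a positive lower bound for $d_n/\mu_n$, exactly as the paper does with the estimate $1=\tilde w_n(0)-\tilde w_n(0,\dots,0,-r_n)\leq C\,r_n$. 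With these two fixes your argument is complete and equivalent to the paper's.
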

Theorem \ref{th6.5} provides us with a priori bounds for the negative solutions of \eqref{i.1} only when $N=1, 2$, or $N=3$ and $q=4$, though it is an optimal result because it is well known that the a priori bounds are lost when $N\geq 3$ and $q\geq \frac{N+2}{N-2}$.

\begin{proof}
Fix a compact interval $J\subset \mathbb{R}$. By elliptic regularity, arguing as in the proof of
Theorem \ref{th5.4}, it suffices to show the existence of a constant $C(J,d)$ such that
\begin{equation}
\label{vi.8}
	\sup_{(\l,u)\in \mathscr{N}_{d}(J)}\|u\|_\infty <C(J,d).
\end{equation}
On the contrary, suppose that \eqref{vi.8} fails. Then, there exist a sequence $\{(\lambda_{n},w_{n})\}_{n\in\mathbb{N}}$ of positive solutions of \eqref{vi.7} in $J\times W^{2,p}_{0}(\Omega)$
and a sequence $\{x_{n}\}_{n\in\mathbb{N}}$ in $\Omega$ such that
\begin{equation}
\label{vi.9}
	M_{n}:=\|w_{n}\|_\infty =w_{n}(x_{n}), \qquad \lim_{n\to\infty} M_{n} = +\infty.
\end{equation}
Since $\bar \Omega$ is compact, there exists a subsequence of $\{x_{n}\}_{n\in\mathbb{N}}$, still
labeled by $n$, such that
\begin{equation}
\label{vi.10}
	\lim_{n\to\infty} x_{n}=x_\infty \in\bar \Omega.
\end{equation}
Since $\bar\O=\O\cup\p\O$, either $x_\infty \in\Omega$, or $x_\infty \in\partial\Omega$.
\par
\vspace{0.2cm}
\noindent \textbf{Case 1: Suppose that $x_\infty \in\Omega$.} Then, for every $n\geq 1$, we consider the re-scaled function
\begin{equation}
\label{vi.11}
	\nu_{n}:=M_{n}^{2-q},\qquad \tilde{w}_{n}\left(\nu_{n}^{\frac{1-q}{2(q-2)}}(x-x_{n})\right):=\nu_{n}^{\frac{1}{q-2}}w_{n}(x), \qquad x\in\Omega,
\end{equation}
which differs from the classical one of Gidas and Spruck \cite{GS}, \cite{GS2}.
By \eqref{vi.11}, regardless the domain of definition of $\tilde w$, we have that, for every $n\geq 1$,
\begin{equation}
\label{vi.12}
   \|\tilde w_n\|_\infty = \tilde w_n(0)=\nu_{n}^\frac{1}{q-2} w_n(x_n)= \nu_{n}^\frac{1}{q-2}M_n =1
\end{equation}
Moreover, by \eqref{vi.9} and \eqref{vi.11}, $\lim_{n\to\infty}\nu_n=0$. To estimate the domain
of definition of $\tilde w_n$, pick any $\e>0$ satisfying
$$
   0<\varepsilon<\min_{n\in\mathbb{N}}\mathrm{dist}(x_{n},\partial\Omega)
$$
and let $n_0\in\mathbb{N}$ be an integer such that
\begin{equation}
\label{vi.13}
	|\mathrm{dist}(x_\infty,\partial\Omega)-\mathrm{dist}(x_{n},\partial\Omega)|<\varepsilon \quad \hbox{for all}\;\; n\geq n_0.
\end{equation}
Note that, setting, $s:=\frac{q-1}{2(q-2)}$, one has that
$$
   \tilde{w}_{n}\left(\nu_{n}^{-s}(x-x_{n})\right):=\nu_{n}^{\frac{1}{q-2}}w_{n}(x), \qquad x\in\Omega.
$$
Thus,  for every $n\geq 1$, the domain of definition of
$\tilde w_n$ is the set
$$
  \mathcal{D}(\tilde w_n) :=\nu_n^{-s}( -x_n+\bar \O),
$$
because $\O$ is the domain of definition of $w_n$. We claim that, setting
$$
  \r_n := \tfrac{\mathrm{dist}(x_\infty,\p\O)}{2}\nu_n^{-s},\qquad n\geq 1,
$$
one has that $B_{\r_n}\equiv B_{\r_n}(0)\subset \mc{D}(\tilde w_n)$ for all $n\geq n_0$, where
$B_R$ is the ball of radius $R>0$ centered at $0$.  Indeed, by choice of $\e$, it follows from \eqref{vi.13} that, for every $n\geq n_0$,
\begin{equation*}
	\tfrac{\mathrm{dist}(x_\infty,\p\O)}{2} < \tfrac{\varepsilon+\mathrm{dist}(x_n,\p\O)}{2}<  \mathrm{dist}(x_n,\p\O).
\end{equation*}
Thus, for every $y\in B_{\r_n}$
with $n\geq n_0$, we have that
$$
  \|y\|\leq \r_n=\tfrac{\mathrm{dist}(x_\infty,\p\O)}{2}\nu_n^{-s}<  \mathrm{dist}(x_n,\p\O)\nu_n^{-s}.
$$
Hence $\|\nu_n^s y\|\leq \mathrm{dist}(x_n,\p\O)$, and so
$x_n+\nu_n^s y \in\O$, for all $n\geq n_0$. Therefore,
\begin{equation}
\label{vi.14}
    B_{\r_n}\subset\mc{D} (\tilde w_n),\qquad n\geq n_0.
\end{equation}
Moreover, since $\lim_{n\to\infty}\nu_n=0$, one has that $\lim_{n\to \infty}\r_n=+\infty$ and hence,
for every $R>0$ one can enlarge $n_0$, if necessary, so that  $R<\r_n$, and hence $B_R\subset \mc{D}(\tilde w_n)$, for all $n\geq n_0$. By differentiating and substituting in \eqref{vi.7}, it becomes apparent that, for any given $R>0$ and $n\geq n_0$, $\tilde{w}_{n}\in W^{2,p}(B_R)$ and it solves
\begin{equation}
\label{vi.15}
	-d\Delta \tilde{w}_{n}=\nu_{n}^{\frac{q-1}{q-2}}\left(1-\tfrac{\lambda_{n}^{2}|\mathfrak{a}|^{2}}{4d} \right)\tilde{w}_{n}-\nu_{n}\lambda_{n} \zeta_{n}\tilde{w}_{n}^{2}+\zeta_n^{q-1}\tilde{w}_{n}^{q}
\end{equation}
in $B_R$, point-wise almost everywhere,  where
$$
  \zeta_n:= \zeta(\l_n,d,x_n+\nu_{n}^{s}y).
$$
According to \eqref{vi.12}, we have that
$$
  \|\tilde w_n\|_{\mc{C}(\bar B_R)}=\tilde w_n(0)=1\quad \hbox{for all}\;\; n\geq n_0.
$$
Therefore, using the $L^p$-theory as in the proof of Theorem \ref{th5.4}, there exists a constant $C=C(J,d)$ such that
\begin{equation*}
	\|\tilde{w}_{n}\|_{W^{2,p}}\leq C(J,d)\quad \hbox{for all}\;\; n\geq n_0,
\end{equation*}
where we are denoting by $C(J,d)$ any constant depending on $d$ and $J$. Consequently, the sequence
$\{\tilde{w}_{n}\}_{n\geq n_0}$ is uniformly bounded in $W^{2,p}({B}_{R})$. Thus, by the compactness of the imbedding $W^{2,p}({B}_{R})\hookrightarrow \mc{C}^{1,1-\frac{N}{p}}(\bar B_R)$, we can extract a
subsequence, $\{\tilde{w}_{n_{k}}\}_{k\in\mathbb{N}}$, such that, for some $w\in W^{2,p}({B}_{R})$,
\begin{equation*}
	\lim_{k\to\infty}\tilde{w}_{n_{k}}=w
\end{equation*}
weakly in $W^{2,p}({B}_{R})$ and strongly in $W^{1,p}({B}_{R})$ and in
$\mathcal{C}^{1,1-\frac{N}{p}}(\bar B_{R})$. Since $\tilde{w}_{n_{k}}(0)=1$ for all $k\in\mathbb{N}$,
it follows that $w(0)=1$. As  $J$ is compact, without loss of generality, we can assume that, for some $\l_\infty\in J$,  $\lim_{k\to\infty} \lambda_{n_k} = \lambda_\infty$. Thus, letting $k\to \infty$ in \eqref{vi.15} at $n=n_k$, each side converges weakly in $L^{p}(B_{R})$ and strongly in $\mc{C}^1(\bar B_R)$ to
\begin{equation*}
	-d\Delta w=\zeta^{q-1}(\l_\infty,d,x_\infty)w^{q}.
\end{equation*}
As $R>0$ is arbitrary, through a further diagonal argument, we can assume that $w$ is actually defined in
the whole of $\R^N$. Since $w \in \mc{C}^1(\R^N)$, by elliptic regularity,
$w \in \mc{C}^2(\R^N)$. Moreover, by construction,  $w\geq 0$, $w(0)=1$ and $\|w\|_\infty\leq 1$.
Consequently, since $\zeta^{q-1}(\l_\infty,d,x_\infty)>0$, this contradicts \cite[Th. 1.1]{GS2}, because we are assuming that either $N=1, 2$, or $N=3$ and $q=4$. So, $q<\frac{N+2}{N-2}$.
\par
\vspace{0.2cm}
\noindent \textbf{Case 2: Suppose that $x_\infty \in\partial\Omega$.} By a change of variable depending only on $\O$, we can assume  that $x_\infty=0$ and there exists a neighborhood of $x_\infty=0$
in $\R^N$, $\mathscr{U}$,  such that
\begin{equation}
\label{vi.16}
\begin{split}
  \mathscr{U}\cap\p\O & =\{x=(x_1,...,x_N)\in\mathscr{U}:\;\; x_N=0\},\\
  \mathscr{U}\cap \O & =\{x=(x_1,...,x_N)\in\mathscr{U}:\;\; x_N>0\}.
\end{split}
\end{equation}
As in the proof of Case 1, for every $n\geq 1$, we consider the re-scaled function defined through
\eqref{vi.11}. Similarly, setting
$$
   s:=\frac{q-1}{2(q-2)}, \qquad \d_n:= \mathrm{dist}(x_{n},\partial\Omega)=x_{n,N},\qquad
   r_n:=\nu_{n}^{-s}\d_n, \qquad n\geq 1,
$$
the domain of definition of $\tilde w_n$, $\mc{D}(\tilde w_n)$, consists of the set of points $y\in\R^N$ such that $x=x_n+\nu_n^s y\in \bar \O$. In particular, it contains the set of pints $y\in \R^N$ such that
$x_n+\nu_n^s y\in \bar \O\cap \mathscr{U}$. Hence, it follows that the condition $\|y\|\leq r_{n}$ together with
\begin{equation}
\label{vi.17}
    x_{n,N}+\nu_n^s y_N \geq 0
\end{equation}
entails $x_n+\nu_n^s y\in \bar \O\cap \mathscr{U}$ and hence, $y\in \mc{D}(\tilde w_n)$. As \eqref{vi.17} can be equivalently expressed as
$$
   y_N \geq -\nu_n^{-s} x_{n,N}=-\nu_n^{-s}\d_n=-r_n,
$$
it becomes apparent that
\begin{equation}
\label{vi.18}
  \mathscr{D}(\tilde w_n) :=\{y\in B_{r_{n}}:y_{N}\geq -r_n\}\subset \mc{D}(\tilde w_n)
  \qquad \hbox{for all}\;\; n\geq 1.
\end{equation}
Therefore, $\tilde w_n$ is well defined in $\mathscr{D}(\tilde w_n)$ for all $n\geq 1$.
\par
As in Case, 1, we have that
\begin{equation}
\label{vi.19}
	\lim_{n\to \infty} \nu_{n}=0,\qquad \|\tilde{w}_{n}(y)\|_{\mathcal{C}(\mathscr{D}(\tilde w_n))}=\tilde{w}_{n}(0)=\nu_{n}^{\frac{1}{q-2}}M_{n}=1.
\end{equation}
Similarly, $\tilde{w}_{n}\in W^{2,p}(\mathscr{D}(\tilde w_n))$ and it satisfies
\eqref{vi.15} in  $\mathscr{D}(\tilde w_n)$.
\par
By elliptic regularity, thanks to \eqref{vi.15} and \eqref{vi.19}, there exists a
constant $C>0$ such that
$$
  \| \nabla \tilde{w}_{n}\|_{\mc{C}(\mathscr{D}(\tilde w_n))}\leq C \quad \hbox{for all}\;\; n\geq 1.
$$
Thus, the mean value theorem implies that, for every $n\geq 1$,
\begin{equation}
\label{vi.20}
	|\tilde{w}_{n}(0)-\tilde{w}_{n}(0,\cdots,0,-r_n)|\leq \|\nabla
    \tilde{w}_{n}\|_{\mc{C}(\mathscr{D}(\tilde w_n))} r_n \leq C r_n.
\end{equation}
On the other hand, by \eqref{vi.11} and \eqref{vi.16}, we find that
\begin{equation*}
\tilde{w}_{n}(0,\cdots,0,-r_n)=
\nu_{n}^{\frac{1}{q-2}}w_{n}(x_{n,1},\cdots,x_{n,N},0)=0,
\end{equation*}
and, thanks to \eqref{vi.19}, $\tilde w_n(0)=1$. Therefore, substituting in \eqref{vi.20} yields
$1 \leq C r_n$ for all $n\geq 1$.  In other words, the sequence $\{r_n\}_{n\in\mathbb{N}}$ is separated away
from zero.
\par
There are two possibilities: Either $\lim_{n\to\infty}r_n=+\infty$,  or there exists a subsequence,
labeled again by $n$, such that $\lim_{n\to \infty}r_n =r$ for some $r>0$.
\par
Suppose $\lim_{n\to\infty}r_n=+\infty$. Then, for every $R>0$, there exists $n_1\in\N$ such that
$R<r_n$ for all $n\geq n_1$. In this case, since $\mathscr{D}(\tilde w_n)$ approximates $\R^N$ as $n\to \infty$, adapting the argument of the last part of the proof of Case 1,
we can again reach a contradiction with Theorem 1.1 of Gidas and Spruck \cite{GS2}.
\par
Suppose that $\lim_{n\to \infty}r_n =r$ for some $r>0$. Then, setting
$$
   \mathbb{H}:=\{y\in\R^N:\;\; y_N > -r\},
$$
and adapting the proof of the Case 1, we get the existence of a function $w\in \mc{C}^2(\mathbb{H})$ such that $0\leq w \leq 1$, $w(0)=1$, $w=0$ on $\p \mathbb{H}$, and
\begin{equation*}
	-d\Delta w=\zeta^{q-1}(\l_\infty,d,x_\infty)w^{q} \quad \hbox{in}\;\;\mathbb{H}.
\end{equation*}
Since $q<\frac{N+2}{N-2}$ and $\zeta^{q-1}(\l_\infty,d,x_\infty)>0$, this contradicts \cite[Th. 1.3]{GS} and
 ends the proof.
\end{proof}

We end this section with a result that will be useful later.

\begin{lemma}
\label{le6.6}
The problem \eqref{i.1} cannot admit a negative solution if  $d\leq \s_{1}^{-1}$ and $\l=0$.
\end{lemma}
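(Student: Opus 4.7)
The plan is to derive a direct contradiction by testing the equation against the principal eigenfunction $\varphi_0$ of $-\Delta$ under Dirichlet boundary conditions. Because the lemma sits at the end of Section 6.2, I will work in the regime relevant there, namely $q\geq 4$ even, where the sign change $v=-u$ turns the sublinear absorption $-u^q$ into a superlinear source $+v^q$, which is exactly what makes the test rigid.

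First, I would specialize \eqref{i.1} to $\lambda=0$, in which case the convective term $\lambda\langle\mathfrak{a},\nabla u\rangle$ and the quadratic $\lambda u^{2}$ both drop out, leaving
\begin{equation*}
-d\Delta u=u-u^{q}\quad\hbox{in}\;\;\Omega,\qquad u=0\quad\hbox{on}\;\;\partial\Omega.
\end{equation*}
Assuming for contradiction that $u$ is a negative solution, set $v:=-u$. By Theorem \ref{th2.4}, $v\gg 0$. Since $q$ is even, $(-v)^{q}=v^{q}$, and a direct substitution shows that $v\in W^{2,p}_{0}(\Omega)$ satisfies
\begin{equation*}
-d\Delta v=v+v^{q}\quad\hbox{in}\;\;\Omega,\qquad v=0\quad\hbox{on}\;\;\partial\Omega.
\end{equation*}

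Next, I would multiply this identity by $\varphi_{0}\gg 0$, integrate over $\Omega$, and use integration by parts together with $-\Delta\varphi_{0}=\sigma_{1}\varphi_{0}$ and the vanishing of both $v$ and $\varphi_{0}$ on $\partial\Omega$ to obtain
\begin{equation*}
d\sigma_{1}\int_{\Omega}v\varphi_{0}\,dx=\int_{\Omega}v\varphi_{0}\,dx+\int_{\Omega}v^{q}\varphi_{0}\,dx,
\end{equation*}
equivalently
\begin{equation*}
(d\sigma_{1}-1)\int_{\Omega}v\varphi_{0}\,dx=\int_{\Omega}v^{q}\varphi_{0}\,dx.
\end{equation*}

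Finally, since $v\gg 0$ and $\varphi_{0}\gg 0$ in $\Omega$, both $\int_{\Omega}v\varphi_{0}\,dx>0$ and $\int_{\Omega}v^{q}\varphi_{0}\,dx>0$. Strict positivity of the right-hand side forces $d\sigma_{1}-1>0$, i.e., $d>\sigma_{1}^{-1}$, which contradicts the standing hypothesis $d\leq\sigma_{1}^{-1}$. I do not expect any genuine obstacle here: the argument is a single eigenfunction test, and its whole force lies in the sign flip $(-v)^{q}=+v^{q}$ available because $q$ is even. For contrast, in the odd case the analogous computation would only give $(1-d\sigma_{1})\int v\varphi_{0}=\int v^{q}\varphi_{0}$, which is consistent with $d<\sigma_{1}^{-1}$, so one really needs the evenness of $q$ to obtain the no-existence conclusion at $\lambda=0$.
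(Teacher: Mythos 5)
Your argument is correct and coincides with the paper's own proof: both reduce to the positive solution $v=-u$ of $-d\Delta v=v+v^{q}$ (using that $q$ is even, as in Section 6.2) and test against the principal eigenfunction $\varphi_{0}$ to get $(d\sigma_{1}-1)\int_{\Omega}v\varphi_{0}\,dx=\int_{\Omega}v^{q}\varphi_{0}\,dx>0$, which is impossible for $d\leq\sigma_{1}^{-1}$. Your closing remark about why the odd case does not yield the same obstruction is also consistent with the paper, where for odd $q$ the component $\mathscr{C}^{-}$ does cross $\lambda=0$.
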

\begin{proof}
As the negative solutions of \eqref{i.1} are given by the positive solutions of \eqref{vi.6} via the change of variables $v=-u$, it suffices to show that the problem
\begin{equation}
\label{vi.21}
\left\{ \begin{array}{ll} -d\Delta v=v+v^{q}
 & \quad \hbox{in}\;\; \Omega,\\ v=0& \quad \hbox{on}\;\; \partial\Omega,\end{array}\right.
\end{equation}
cannot admit a positive solution if $d\leq \s_{1}^{-1}$. Let $v$ be a positive solution of \eqref{vi.21}.
Then, multiplying the $v$-equation by a principal eigenfunction, $\v_0$, associated to  $\s_1$, integrating by parts in $\O$ and rearranging terms yields
\begin{equation*}
	(d\s_{1}-1)\int_{\O}v\varphi_{0} \, dx=\int_{\Omega}v^{q}\varphi_{0} \, dx.
\end{equation*}
Since $\int_{\Omega}v^{q}\varphi_{0} \, dx>0$, this cannot occur if $d\leq\s_{1}^{-1}$.
\end{proof}

\section{Global bifurcation diagrams}

\noindent In this section, we ascertain the global structure of the set of positive and negative solutions of \eqref{i.1}. Recall that the solutions of \eqref{i.1} are the zeroes of the nonlinear differential operator
$\mathfrak{F}_{d}:\mathbb{R}\times W^{2,p}_{0}(\Omega)\to L^{p}(\Omega)$ defined by
\begin{equation}
\label{7.1}
\mathfrak{F}_{d}(\lambda,u)=d\Delta u+\lambda\langle \mathfrak{a}, \nabla u\rangle + u+\lambda u^{2}-u^{q}.
\end{equation}
The next result establishes that the positive (resp. negative) solutions of  \eqref{i.1} cannot leave the interior of the positive cone of the ordered Banach space $\mathcal{C}^{1}_0(\bar \Omega)$ unless they reach $u=0$.

\begin{lemma}
\label{le7.1}
	Let $\{(\lambda_{n},u_{n})\}_{n\in\mathbb{N}}$ be a sequence of positive (resp. negative) solutions of \eqref{i.1} such that
	\begin{equation}
	\label{E7.2}
	\lim_{n\to\infty}(\lambda_{n},u_{n})=(\lambda_{0},u_{0})\in\mathfrak{F}_{d}^{-1}(0), \quad \text{ in } \mathbb{R}\times W^{2,p}_{0}(\Omega).
	\end{equation}
	Then, either $u_{0}\gg 0$ (resp. $u_{0}\ll 0$), or $u_{0}=0$.
\end{lemma}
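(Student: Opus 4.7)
The strategy is to pass to the limit, obtain a one-sided solution, and then invoke the Hopf-type argument used already in the proof of Theorem \ref{th2.4} to rule out non-strict positivity. Since the two statements are symmetric under $u\mapsto -u$, I will only discuss the positive case.

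First I would pass to the limit in a stronger topology. Because $p>N$, the Rellich--Kondrachov theorem yields the continuous embedding $W^{2,p}(\Omega)\hookrightarrow \mc{C}^{1,1-\frac{N}{p}}(\bar\Omega)$, so the convergence \eqref{E7.2} entails $\lim_{n\to\infty} u_n=u_0$ in $\mc{C}^1(\bar\Omega)$. Since $u_n\gg 0$ gives, in particular, $u_n\geq 0$ on $\bar\Omega$, taking pointwise limits yields $u_0\geq 0$ on $\bar\Omega$, and $u_0$ solves \eqref{i.1} at $\lambda=\lambda_0$.

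Next I would repeat the change of variables \eqref{2.7} with the fixed value $\lambda_0$, setting $v_0(x):=e^{\frac{\lambda_0}{2d}\langle \mathfrak{a},x\rangle}u_0(x)$. Then $v_0\in W^{2,p}_0(\Omega)$, $v_0\geq 0$ on $\bar\Omega$, and $v_0$ satisfies \eqref{2.8} at $\lambda=\lambda_0$. Hence, writing the linear part and the semilinear remainder together, there exists a continuous function $b\in \mc{C}(\bar\Omega)$ (whose precise expression involves $\lambda_0$, $d$ and the already known $\mc{C}^1$ function $v_0$, in particular $v_0^{q-2}$, and hence is bounded) such that
\begin{equation*}
(-\Delta + b(x))v_0=0\quad\text{in }\Omega.
\end{equation*}
Choose $\omega>0$ so large that $c:=b+\omega\geq 0$ in $\bar\Omega$. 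Then $(-\Delta+c)v_0=\omega v_0\geq 0$ with $c\geq 0$, and $v_0\geq 0$ attains its minimum $0$ on $\bar\Omega$. By the strong maximum principle of Bony \cite{Bo}, which is applicable in $W^{2,p}(\Omega)$, either $v_0\equiv 0$ in $\Omega$, in which case $u_0=0$; or $v_0$ cannot attain its minimum in $\Omega$, so that $v_0(x)>0$ for all $x\in\Omega$, and, since $\Omega$ is of class $\mc{C}^2$, the Hopf--Oleinik boundary lemma forces $\frac{\partial v_0}{\partial n}(x)<0$ for all $x\in\partial\Omega$. Because the change of variables \eqref{2.7} preserves these sign properties, this yields $u_0\gg 0$ in the required sense.

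I do not expect any serious obstacle: the only mildly delicate point is ensuring that the coefficient $b$ is bounded and measurable so that Bony's version of the strong maximum principle applies, but this is guaranteed by the fact that $v_0\in \mc{C}^1(\bar\Omega)$, so $v_0^{q-2}$ is bounded and continuous, which makes $b$ continuous on $\bar\Omega$. The negative case follows by applying the argument just described to the sequence $\{-u_n\}_{n\in\mathbb{N}}$, which, as observed right before Theorem \ref{th2.4}, consists of strongly positive solutions of an analogous problem.
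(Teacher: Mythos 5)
Your argument is correct and follows essentially the same route as the paper: pass to the limit in $\mc{C}^1(\bar\Omega)$ via the embedding $W^{2,p}(\Omega)\hookrightarrow \mc{C}^{1,1-\frac{N}{p}}(\bar\Omega)$, conclude $u_0\geq 0$ solves \eqref{i.1}, and then obtain the dichotomy $u_0=0$ or $u_0\gg 0$. The only difference is presentational: the paper simply invokes Theorem \ref{th2.4} for the strong positivity of a nontrivial nonnegative solution, whereas you re-run its Bony/Hopf--Oleinik argument inline, which is the same proof.
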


\begin{proof}
	We will prove it for the case of positive solutions. By equation \eqref{E7.2} and the Sobolev embedding $W^{2,p}(\O)\hookrightarrow \mc{C}^{1,1-\frac{N}{p}}(\bar\O)$, it follows that  $u_{0}\in\mathfrak{F}_{d}^{-1}(0)$ is the pointwise limit of positive functions, $u_{n}\gg 0$, $n\in\mathbb{N}$. Hence $u_{0}=0$, concluding the proof, or $u_{0}\gneq 0$. In the later case, Theorem \ref{th2.4} is applied to obtain $u_0\gg 0$. The proof is complete.
\end{proof}

 The next result establishes a pivotal compactness property of  $\mf{F}_{d}$.

\begin{lemma}
\label{le7.2}
For every $d>0$, $\mf{F}_{d}$ is proper on closed and bounded subsets of $\mathbb{R}\times W^{2,p}_{0}(\O)$. \end{lemma}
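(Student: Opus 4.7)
The plan is to verify properness in the standard way: given any compact $K \subset L^p(\Omega)$ and any closed bounded set $B \subset \mathbb{R} \times W^{2,p}_0(\Omega)$, show that $\mathfrak{F}_d^{-1}(K) \cap B$ is sequentially compact. So I would start by fixing a sequence $\{(\lambda_n,u_n)\}_{n\in\mathbb{N}} \subset \mathfrak{F}_d^{-1}(K) \cap B$ with $\mathfrak{F}_d(\lambda_n,u_n)=f_n \in K$. Since $B$ is bounded, $\{\lambda_n\}$ is bounded in $\mathbb{R}$, so a subsequence (relabeled) satisfies $\lambda_n \to \lambda_0$, and since $K$ is compact in $L^p(\Omega)$, a further subsequence gives $f_n \to f_0$ in $L^p(\Omega)$.

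Next, I would use the uniform bound of $\{u_n\}$ in $W^{2,p}_0(\Omega)$ together with the Rellich--Kondrachov compact embedding $W^{2,p}(\Omega) \hookrightarrow \mathcal{C}^{1,1-\frac{N}{p}}(\bar\Omega)$ (valid since $p>N$) to extract a subsequence with $u_n \to u_0$ in $\mathcal{C}^1(\bar\Omega)$. In particular, $\nabla u_n \to \nabla u_0$ and $u_n^k \to u_0^k$ uniformly for each $k\geq 1$. Rewriting \eqref{7.1} as
\begin{equation*}
-d\Delta u_n = \lambda_n\langle \mathfrak{a},\nabla u_n\rangle + u_n + \lambda_n u_n^2 - u_n^q - f_n,
\end{equation*}
the right-hand side converges in $L^p(\Omega)$ to $\lambda_0\langle\mathfrak{a},\nabla u_0\rangle + u_0 + \lambda_0 u_0^2 - u_0^q - f_0$.

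The key step is then to invoke the fact that $-d\Delta : W^{2,p}_0(\Omega) \to L^p(\Omega)$ is a topological isomorphism (by the Calder\'{o}n--Zygmund estimates already used in the proof of Theorem \ref{th5.4}, together with unique solvability of the Dirichlet problem for the Laplacian on a $\mathcal{C}^2$ domain). Applying its continuous inverse to the display above converts $L^p$-convergence of the right-hand side into $W^{2,p}_0$-convergence of $u_n$; hence $u_n \to u_0$ in $W^{2,p}_0(\Omega)$. Since $B$ is closed, $(\lambda_0,u_0)\in B$; by continuity of $\mathfrak{F}_d$ and closedness of $K$, $\mathfrak{F}_d(\lambda_0,u_0)=f_0 \in K$, so $(\lambda_0,u_0)\in\mathfrak{F}_d^{-1}(K)\cap B$, completing the proof.

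I do not expect a genuine obstacle here: the whole argument is a routine compactness--continuity bootstrap, and the only subtle point is the upgrade from $\mathcal{C}^1$-convergence to $W^{2,p}$-convergence, which is handled cleanly by inverting $-d\Delta$ from $W^{2,p}_0(\Omega)$ onto $L^p(\Omega)$. The hypothesis $p>N$ is essential twice: to make sense of the pointwise manipulations via the Sobolev embedding into $\mathcal{C}^{1,1-N/p}(\bar\Omega)$, and to ensure that the nonlinear term $u_n^q$ is controlled in $L^p$ from a $\mathcal{C}^1$-convergent sequence.
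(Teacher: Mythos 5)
Your proof is correct and rests on the same analytic core as the paper's: boundedness in $W^{2,p}_0(\Omega)$ plus the compact embedding $W^{2,p}(\Omega)\hookrightarrow\mathcal{C}^{1,1-\frac{N}{p}}(\bar\Omega)$ to pass to a $\mathcal{C}^1$-convergent subsequence, followed by the Calder\'on--Zygmund invertibility of $-d\Delta:W^{2,p}_0(\Omega)\to L^p(\Omega)$ (equivalently, the $L^p$-elliptic estimates the paper applies to $u_{n_k}-u_0$) to upgrade to $W^{2,p}$-convergence. The only, inessential, difference is organizational: the paper checks properness via Berger's criterion (closedness of the image of a set $[\lambda_-,\lambda_+]\times\bar B_R$ together with compactness of the fibers $\mathfrak{F}_d^{-1}(f)$ in it), whereas you verify directly from the definition that $\mathfrak{F}_d^{-1}(K)\cap B$ is sequentially compact for every compact $K\subset L^p(\Omega)$ and closed bounded $B$.
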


\begin{proof}
It suffices to prove that the restriction of $\mf{F}_{d}$ to the closed subset
${K}:=[\lambda_{-},\lambda_{+}]\times \bar {B}_{R}$ is proper, where $\lambda_{-}<\lambda_{+}$ and $B_{R}$ stands for the open ball of $W^{2,p}_{0}(\O)$ of radius $R>0$ centered at $0$. According to
\cite[Th. 2.7.1]{B}, we must check that $\mf{F}_{d}({K})$ is closed in $L^{p}(\O)$, and that, for every $f\in L^{p}(\O)$, the set $\mf{F}_{d}^{-1}(f)\cap {K}$ is compact in $\mathbb{R}\times W^{2,p}_{0}(\O)$.
\par
To show that $\mf{F}_{d}(K)$ is closed in $L^{p}(\O)$, let $\{f_{n}\}_{n\in\mathbb{N}}$ be a sequence in $\mf{F}_{d}(K)\subset L^{p}(\O)$ such that
\begin{equation}
\label{7.2}
	\lim_{n\to\infty} f_{n}=f \quad \text{in } L^{p}(\O).
\end{equation}
Then, there exists a sequence $\{(\l_{n},u_{n})\}_{n\in\mathbb{N}}$ in $K$ such that \begin{equation}
\label{7.3}
	f_{n}=\mf{F}_{d}(\l_{n},u_{n}) \quad \hbox{for all} \;\; n\in\mathbb{N}.
\end{equation}
By the compactness of the imbedding $W^{2,p}(\O)\hookrightarrow \mc{C}^{1,1-\frac{N}{p}}(\bar\O)$, we can extract a subsequence $\{(\l_{n_{k}},u_{n_{k}})\}_{k\in\mathbb{N}}$ such that, for some $(\l_{0},u_{0})\in [\l_-,\l_+]\times \mc{C}^{1,1-\frac{N}{p}}(\bar\O)$,   $\lim_{k\to \infty} \l_{n_{k}} = \l_{0}$ and
\begin{equation}
	\label{7.4}
	\lim_{k\to\infty} u_{n_{k}} =u_{0} \quad \text{in } \mc{C}^{1,1-\frac{N}{p}}(\bar\O).
\end{equation}
As a direct consequence of \eqref{7.2}, \eqref{7.3} and \eqref{7.4}, it becomes apparent that
$u_{0}$ must be a weak solution of the nonlinear elliptic problem
\begin{equation}
	\label{7.5}
\left\{ \begin{array}{ll} d\Delta u_{0}+\l_{0} \langle \mf{a},\nabla u_{0}\rangle +
    u_0+\lambda u_0^{2}-u_0^{q} =f & \quad \hbox{in}\;\; \O,
    \\ u_0 = 0 & \quad \hbox{on}\;\;\p\O. \end{array}\right.
\end{equation}
By elliptic regularity, $u_{0}\in W^{2,p}_{0}(\O)$ and $f=\mf{F}_{d}(\l_{0},u_{0})$. Therefore, $f\in \mf{F}_{d}(K)$.
\par
Now, pick $f\in L^{p}(\O)$. To show that $\mf{F}_{d}^{-1}(f)\cap K$ is compact in $[\l_-,\l_+]\times W^{2,p}_{0}(\O)$. Let $\{(\l_{n},u_{n})\}_{n\in\mathbb{N}}$ be a sequence in $\mf{F}_{d}^{-1}(f)\cap K$. Then,
\begin{equation}
\label{7.6}
	\mf{F}_{d}(\l_{n},u_{n})=f \quad \hbox{for all}\;\; n\in\mathbb{N}.
\end{equation}
Based again on the compactness of the imbedding $W^{2,p}(\O)\hookrightarrow \mc{C}^{1,1-\frac{N}{p}}(\bar\O)$, we can extract a subsequence $\{(\l_{n_{k}},u_{n_{k}})\}_{k\in\mathbb{N}}$ such that, for some $(\l_{0},u_{0})\in [\l_-,\l_+]\times \mc{C}^{1,1-\frac{N}{p}}(\bar\O)$,   $\lim_{k\to \infty} \l_{n_{k}} = \l_{0}$ and \eqref{7.4} holds.
Similarly,  $u_{0}\in \mc{C}^{1,1-\frac{N}{p}}(\bar\O)$ is a weak solution of \eqref{7.5} and, by elliptic regularity,
$u_{0}\in W^{2,p}_{0}(\O)$ and $\mf{F}_{d}(\l_{0},u_{0})=f$. In particular,
\begin{equation*}
	-d\Delta (u_{n_{k}}-u_{0})=\l_{0}\langle \mf{a}, \nabla (u_{n_{k}}-u_{0})\rangle +\lambda_{0} (u^{2}_{n_{k}}-u^{2}_{0})-(u^{q}_{n_{k}}-u^{q}_{0}) \quad \hbox{in}\;\; \O, \qquad k\in\mathbb{N}.
	\end{equation*}
By the $L^p$-elliptic estimates, there is a positive constant $C>0$ such that
\begin{equation*}
	\|u_{n_{k}}-u_{0}\|_{W_0^{2,p}(\O)}\leq C \left( \|u^{2}_{n_{k}}-u^{2}_{0}\|_{L^{p}(\O)}+ \|u^{q}_{n_{k}}-u^{q}_{0}\|_{L^{p}(\O)}\right)
\end{equation*}
for all $k\in\N$. On the other hand, $\{u_{n_k}\}_{k\in\N}$ is bounded in $W^{2,p}(\O)$ and hence it is relatively compact in $\mc{C}^{1,1-\frac{N}{p}}(\bar\O)$. Therefore, letting $k\to \infty$ we finally get
that
$$
   \lim_{k\to \infty} (\l_{n_{k}},u_{n_{k}}) = (\l_{0},u_{0})\quad \hbox{in}\;\;
   [\l_-,\l_+] \times W^{2,p}_{0}(\O).
$$
This concludes the proof.
\end{proof}

The rest of this section is devoted to the analysis of the global structure of the set of positive and negative solutions of \eqref{i.1}. As it is strongly dependent on the size of the diffusion coefficient $d>0$, we have divided it into three subsections.
\vspace{0.2cm}

\subsection{The case when $d<\s_{1}^{-1}$} Throughout this section we assume that $d\s_1<1$. To get our main results in this case, we will use the unilateral global bifurcation theorem \cite[Th. 8.5]{JJ4}, which is a refinement of \cite[Th. 6.4.3]{LG01} and \cite[Th. 1.2]{LGRI}. To state it, consider two real Banach spaces, $U, V$,  and an operator $\mathfrak{F}\in\mathcal{C}^1(\mathbb{R}\times U,V)$ satisfying:
\begin{enumerate}
\item[(C)] $U$  is a subspace of $V$ with compact inclusion  $U \hookrightarrow V$.
\item[(F1)] $\mathfrak{F}(\lambda,0)=0$ for all $\lambda\in\mathbb{R}$.
\item[(F2)] $D_{u}\mathfrak{F}(\lambda,u)\in\Phi_{0}(U,V)$ for all $\lambda\in\mathbb{R}$ and $u\in U$.
\item[(F3)] $\mf{F}$ is proper on closed and bounded subsets of $\R\times U$.
\item[(F4)] The map
\begin{equation}
\label{7.7}
	\mf{N}(\l,u):= \mf{F}(\l,u)-D_u\mf{F}(\l,0)u,\qquad (\l,u)\in \R\times U,
\end{equation}
admits a continuous extension, also denoted by $\mf{N}$, to $\R \times V$.
\item[(F5)] The linealization $\mf{L}(\l):=D_u\mf{F}(\l,0)$ is analytic in $\l\in\mathbb{R}$ and $\l_0$ is an isolated eigenvalue of $\mf{L}(\l):=D_u\mf{F}(\l,0)$ such that
$N[\mathfrak{L}(\lambda_{0})]=\mathrm{span}[\varphi_{0}]$ for some $\varphi_0\in U$ with  $\|\v_0\|=1$.
\end{enumerate}
We consider for $\varepsilon>0$ and $\eta\in(0,1)$, the open subsets of $\mathbb{R}\times U$,
\begin{equation*}
Q^{+}_{\varepsilon,\eta}:=\{(\l,u)\in\mathbb{R}\times U: |\l-\l_{0}|<\varepsilon, \ \langle \varphi^{\ast}_{0},u\rangle>\eta \|u\| \},
\end{equation*}\begin{equation*}
Q^{-}_{\varepsilon,\eta}:=\{(\l,u)\in\mathbb{R}\times U: |\l-\l_{0}|<\varepsilon, \ \langle \varphi^{\ast}_{0},u\rangle<-\eta \|u\| \},
\end{equation*}
We denote by $\mathcal{T}$ the set of trivial solutions of $\mf{F}(\l,u)=0$, that is, $\mathcal{T}:=\{(\l,0):\l\in\mathbb{R}\}$. The set $\mf{F}^{-1}(0)\backslash\mc{T}$ is consequently referred as the set of non-trivial solutions. Then, under these hypotheses, the following result holds.

\begin{theorem}
\label{th7.3}
	Suppose {\rm (C)}, $\mf{F}$ satisfies {\rm (F1)--(F5)} and
	$\chi[\mathfrak{L},\l_{0}]\in 2\mathbb{N}+1$. Then, there exist two connected components of $\mathfrak{F}^{-1}(0)\backslash\mc{T}$, denoted by $\mathfrak{C}^{+}$ and $\mf{C}^{-}$, such that $(\l_{0},0)\in\overline{\mf{C}^{\pm}}$ and for sufficiently small $\delta>0$,
	$$\mathfrak{C}^{+}\cap B_{\delta}(\lambda_{0},0)\subset Q^{+}_{\varepsilon,\eta}, \quad \mathfrak{C}^{-}\cap B_{\delta}(\lambda_{0},0)\subset Q^{-}_{\varepsilon,\eta},$$
	for every $\varepsilon\in (0,\varepsilon_{0})$ and $\eta\in(0,1)$, for some $\varepsilon_{0}>0$. On the other hand, let $Z\subset U$ a closed subspace such that
$$
	U= N[\mf{L}_0]\oplus Z,\qquad N[\mf{L}_0]=\mathrm{span}[\v_0].
$$
	Then, each unilateral component $\mf{C}^\nu$, $\nu\in\{\pm\}$, satisfies some of the following alternatives:
	\begin{enumerate}
		\item[{\rm (i)}] $\mf{C}^\nu$ is not compact in $\R\times U$.
		\item[{\rm (ii)}] There exists $\mu \neq \l_0$ such that $(\mu,0)\in \overline{\mf{C}^\nu}$.
		\item[{\rm (iii)}]  There exist $\l\in\R$ and $z\in Z\setminus\{0\}$ such that $(\l,z)\in \mf{C}^\nu$.
	\end{enumerate}
\end{theorem}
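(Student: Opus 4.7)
My plan is to adapt the classical Rabinowitz global bifurcation machinery to the present Fredholm setting, with the unilateral refinement in the spirit of Dancer and L\'opez-G\'omez. The proof splits naturally into three stages: (a) a local degree argument yielding bifurcation from $(\lambda_0,0)$; (b) the Rabinowitz continuation argument giving a connected component with the standard (bilateral) alternative; (c) a cone-preserving argument separating the emanating branch into the two pieces $\mathfrak{C}^+$ and $\mathfrak{C}^-$ together with a finer degree-theoretic argument showing that each of them individually satisfies the trichotomy.

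First, I would recast $\mathfrak{F}(\lambda,u)=0$ as a compact vector field. Since $\mathfrak{L}(\lambda_0)\in\Phi_0(U,V)$ and $\lambda_0$ is an isolated generalized eigenvalue, one can pick $\omega>0$ such that $\mathfrak{L}(\lambda_0)-\omega J\in GL(U,V)$, where $J:U\hookrightarrow V$ is the compact embedding granted by hypothesis (C). Composing with $[\mathfrak{L}(\lambda_0)-\omega J]^{-1}$ and using (F4) to extend the nonlinear part, the equation near $(\lambda_0,0)$ becomes $u-\mathcal{K}(\lambda,u)=0$ with $\mathcal{K}$ compact. The Leray--Schauder degree $\deg(I-\mathcal{K}(\lambda,\cdot),B_r(0),0)$ is then well defined for $\lambda$ near $\lambda_0$ and $r>0$ sufficiently small. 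Because $\chi[\mathfrak{L},\lambda_0]$ is odd, Theorem 5.6.2 of \cite{LG01} ensures that the local topological index of $u=0$ flips sign as $\lambda$ crosses $\lambda_0$, so non-trivial solutions exist in every punctured neighborhood of $(\lambda_0,0)$. Combining this with properness (F3) and Whyburn's lemma yields a connected component $\mathfrak{C}\subset\mathfrak{F}^{-1}(0)\setminus\mathcal{T}$ whose closure contains $(\lambda_0,0)$ and which satisfies the Rabinowitz alternative: either $\mathfrak{C}$ is not compact in $\mathbb{R}\times U$, or it meets $\mathcal{T}$ at some other point $(\mu,0)$.

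The unilateral refinement follows from a Lyapunov--Schmidt reduction at $(\lambda_0,0)$. Writing $u=x\varphi_0+\mathcal{Y}(\lambda,x\varphi_0)$ with $\mathcal{Y}$ taking values in $Z$ and satisfying $\mathcal{Y}(\lambda,0)=0$, $D_u\mathcal{Y}(\lambda_0,0)=0$, one gets $\langle \varphi_0^*,u\rangle = x+o(\|u\|)$ as $u\to 0$. Consequently, choosing $\varepsilon_0,\eta\in(0,1)$ small enough, every non-trivial solution with $|\lambda-\lambda_0|<\varepsilon_0$ and $\|u\|$ small lies in $Q^+_{\varepsilon_0,\eta}$ or $Q^-_{\varepsilon_0,\eta}$ according to the sign of $x$. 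One then \emph{defines} $\mathfrak{C}^\pm$ as the connected component, in $\mathfrak{F}^{-1}(0)\setminus\mathcal{T}$, of the set of solutions whose local branch near $(\lambda_0,0)$ lies in $Q^\pm_{\varepsilon_0,\eta}$.

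The main obstacle, and the genuinely delicate part, is showing that each of $\mathfrak{C}^+$ and $\mathfrak{C}^-$ \emph{separately} satisfies one of (i)--(iii), rather than only their union doing so. The risk is that $\mathfrak{C}^+$, after exiting $Q^+_{\varepsilon_0,\eta}$, reconnects with $\mathfrak{C}^-$ inside the $Z$-complement of $\mathrm{span}[\varphi_0]$ without returning to $\mathcal{T}$ and without ever containing a point $(\lambda,z)$ with $z\in Z\setminus\{0\}$. To preclude this I would argue by contradiction: if $\mathfrak{C}^+$ satisfied none of (i), (ii), (iii), then it would be compact, disjoint from $\mathcal{T}$ (except at $(\lambda_0,0)$), disjoint from $\mathbb{R}\times Z$, and separated from $\mathfrak{C}^-$; separation together with properness (F3) would allow one to enclose $\mathfrak{C}^+$ in a bounded open set $\mathcal{O}$ whose boundary avoids $\mathfrak{F}^{-1}(0)$ except at $(\lambda_0,0)$. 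A degree computation on a small annular neighborhood of $(\lambda_0,0)$ inside $\mathcal{O}$, carried out separately on the half associated to $\mathrm{span}[\varphi_0]^+$, would then assign to $\mathfrak{C}^+$ a non-zero parity-type index attributable only to the odd multiplicity $\chi$ and incompatible with $\mathfrak{C}^+$ being enclosable in $\mathcal{O}$. This is precisely the Alexander--Antman-type degree argument developed in \cite[Th. 6.4.3]{LG01}, \cite[Th. 1.2]{LGRI} and refined in \cite[Th. 8.5]{JJ4}, which I would invoke in place of reproducing the full technical machinery.
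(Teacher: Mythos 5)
There is a genuine gap, on two counts. First, your reduction of $\mathfrak{F}(\lambda,u)=0$ to a compact vector field $u-\mathcal{K}(\lambda,u)=0$ is not available at the stated level of generality. Hypotheses (C) and (F4) only make the \emph{nonlinear} part $\mathfrak{N}(\lambda,\cdot)$ compact from $U$ to $V$; they do not make $\mathfrak{L}(\lambda)-\mathfrak{L}(\lambda_0)$ a compact operator, so composing with $[\mathfrak{L}(\lambda_0)-\omega J]^{-1}$ does not produce an identity-plus-compact family, and the Leray--Schauder degree (and with it the classical Rabinowitz/Whyburn argument as you state it) cannot be applied. This is precisely the point of the Fredholm setting: one has to work with the degree/parity theory for continuous or $\mathcal{C}^1$ paths of Fredholm operators of index zero (the framework of \cite{LGRI} and \cite{JJ4}), where the oddness of $\chi[\mathfrak{L},\lambda_0]$ is translated into a jump of the parity rather than of a Leray--Schauder index. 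Second, and more importantly, the heart of the statement --- that each \emph{unilateral} component $\mathfrak{C}^{\pm}$ separately satisfies the trichotomy (i)--(iii) --- is exactly the part your sketch does not prove: you argue by contradiction and then declare that the contradiction ``is precisely the Alexander--Antman-type degree argument developed in \cite[Th. 6.4.3]{LG01}, \cite[Th. 1.2]{LGRI} and refined in \cite[Th. 8.5]{JJ4}.'' But the statement under review \emph{is} \cite[Th. 8.5]{JJ4}; invoking it (or its immediate ancestors) at the decisive step makes the argument circular as a proof. Recall also that this step is genuinely delicate: Dancer's work \cite{Da} shows that naive unilateral alternatives fail, which is why alternative (iii), formulated through the complement $Z$, must appear and why the enclosing-open-set/parity argument has to be carried out with care; your definition of $\mathfrak{C}^{\pm}$ via ``the local branch lying in $Q^{\pm}_{\varepsilon_0,\eta}$'' also needs to be made precise before the trichotomy can even be formulated for each piece.

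For calibration: the paper itself does not prove this theorem; it imports it verbatim as \cite[Th. 8.5]{JJ4}, a refinement of \cite[Th. 6.4.3]{LG01} and \cite[Th. 1.2]{LGRI}. So if your intention is simply to justify the statement by citation, that matches the paper's treatment; but as a self-contained proof your proposal both relies on an unavailable Leray--Schauder reduction and defers the essential unilateral step to the very result being proved.
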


This theorem will provide us with the global behavior of the continua of positive and negative solutions of \eqref{i.1} when $d<\s_{1}^{-1}$. By the local analysis of Sections 2 and 3, we already know that from each of the points $(\l_{1}(d),0)$ and $(-\l_{1}(d),0)$ there emanates an analytic curve of positive solutions of \eqref{i.1}. Let us denote by $\mathscr{C}^{+}_{+}$ and $\mathscr{C}^{+}_{-}$ the connected components of the set of non-trivial solutions $\mf{F}_{d}^{-1}(0)\backslash\mathcal{T}$ containing the curves of positive solutions emanating from $(\l_{1}(d),0)$ and $(-\l_{1}(d),0)$, respectively. By Lemma \ref{le7.1} they can only leave the interior of the positive cone through $u=0$ and hence they consists of positive solutions, that is, $\mathscr{C}^{+}_{\pm}\subset\mathscr{S}_{d}$. The next result, based on
Theorem \ref{th7.3}, shows that $\mathscr{C}^{+}_{+}=\mathscr{C}^{+}_{-}$, as illustrated by Figure \ref{F1}.
\begin{theorem}
\label{th7.4}
It holds that $\mathscr{C}^{+}_{+}=\mathscr{C}^{+}_{-}$. Thus,
there is a connected component of the set of positive solutions $\mathscr{C}^{+}(=\mathscr{C}^{+}_{+}=\mathscr{C}^{+}_{-})\subset \mathscr{S}_{d}$ such that  $(\pm\l_{1}(d),0)\in\overline{\mathscr{C}^{+}}$.
\end{theorem}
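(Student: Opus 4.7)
The plan is to invoke the unilateral global bifurcation theorem, Theorem~\ref{th7.3}, at the bifurcation point $(\lambda_{1}(d),0)$ with $U=W^{2,p}_{0}(\Omega)$ and $V=L^{p}(\Omega)$. Hypothesis (C) is the Rellich--Kondrachov embedding; (F1) is immediate from \eqref{7.1}; (F2) is Lemma~\ref{le3.1}; (F3) is Lemma~\ref{le7.2}; (F4) is verified by direct inspection of the nonlinearity $\lambda u^{2}-u^{q}$; and (F5) follows from the analyticity of $\mathfrak{L}_{d}(\lambda)$ in $\lambda$ combined with Theorem~\ref{th3.2}(i), which also gives $\chi[\mathfrak{L}_{d},\lambda_{1}(d)]=1\in 2\N+1$. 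The kernel of $\mathfrak{L}_{d}(\lambda_{1}(d))$ is spanned by the strictly positive function $\psi_{+}:=e^{-\frac{\lambda_{1}(d)}{2d}\langle\mathfrak{a},x\rangle}\varphi_{0}$, as can be read off the proof of Theorem~\ref{th3.2}(i). Theorem~\ref{th7.3} then yields a connected component $\mathscr{C}^{+}_{+}$ of $\mathfrak{F}_{d}^{-1}(0)\setminus\mathcal{T}$ whose initial branch at $(\lambda_{1}(d),0)$ lies in the positive-direction cone $Q^{+}_{\varepsilon,\eta}$, and by Lemma~\ref{le7.1} together with Theorem~\ref{th2.4} the entire component consists of strongly positive solutions, i.e. $\mathscr{C}^{+}_{+}\subset\mathscr{S}_{d}$.

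Next I rule out alternatives (i) and (iii) of Theorem~\ref{th7.3}. Lemma~\ref{le5.3}(i) confines $\mathcal{P}_{\lambda}(\mathscr{C}^{+}_{+})$ to a compact interval $J\subset\R$, and Theorem~\ref{th5.4} provides a uniform $W^{2,p}_{0}$-bound on $\mathscr{C}^{+}_{+}$; hence $\mathscr{C}^{+}_{+}$ is contained in a closed and bounded subset of $\R\times U$, which by Lemma~\ref{le7.2} makes it compact, excluding (i). For (iii), let $\psi^{*}_{+}$ denote the eigenfunctional spanning $N[\mathfrak{L}_{d}(\lambda_{1}(d))^{*}]$; its strict positivity follows by applying the Krein--Rutman theorem to the adjoint convection--diffusion operator, exactly as in the proof of Theorem~\ref{th2.4}. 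For every $u\in\mathscr{C}^{+}_{+}$ one has $u\gg 0$ and therefore $\langle u,\psi^{*}_{+}\rangle>0$, so $u$ cannot lie in the $\psi^{*}_{+}$-annihilator $Z$. Hence (iii) is also impossible.

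Consequently, alternative (ii) must hold: there exists $\mu\neq\lambda_{1}(d)$ with $(\mu,0)\in\overline{\mathscr{C}^{+}_{+}}$. The approximation argument of Theorem~\ref{th3.2}(iv) forces $\mu\in\Sigma_{+}(\mathfrak{L}_{d})=\{\pm\lambda_{1}(d)\}$, so $\mu=-\lambda_{1}(d)$. The Crandall--Rabinowitz analysis of the regular case $d<\sigma_{1}^{-1}$ ensures that in a neighborhood of $(-\lambda_{1}(d),0)$ the non-trivial positive solutions of \eqref{i.1} all lie on the single analytic curve whose closure defines $\mathscr{C}^{+}_{-}$. Since $\mathscr{C}^{+}_{+}$ accumulates at $(-\lambda_{1}(d),0)$ through strongly positive solutions, it must intersect this local curve, so $\mathscr{C}^{+}_{+}\cap\mathscr{C}^{+}_{-}\neq\emptyset$, and being connected components of the same set $\mathfrak{F}_{d}^{-1}(0)\setminus\mathcal{T}$ they coincide. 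The setting $\mathscr{C}^{+}:=\mathscr{C}^{+}_{+}=\mathscr{C}^{+}_{-}$ then contains both bifurcation points $(\pm\lambda_{1}(d),0)$ in its closure. The main technical obstacle is the exclusion of alternative (iii), which hinges on the strict positivity of $\psi^{*}_{+}$; this is delicate because the convection term $\lambda\langle\mathfrak{a},\nabla u\rangle$ renders $\mathfrak{L}_{d}(\lambda)$ non-self-adjoint, so the positivity of $\psi^{*}_{+}$ must be extracted from a Krein--Rutman argument applied to the adjoint principal eigenvalue problem rather than from symmetry.
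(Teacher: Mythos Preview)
Your proof is correct and follows essentially the same strategy as the paper: apply Theorem~\ref{th7.3}, exclude alternative~(i) via the a priori bounds of Lemma~\ref{le5.3}(i) and Theorem~\ref{th5.4}, exclude alternative~(iii) via positivity, and then use Theorem~\ref{th3.2}(iv) to force $\mu=-\lambda_1(d)$.

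The one notable difference is in your handling of alternative~(iii), which you flag as ``the main technical obstacle.'' In fact this step can be made trivial. Theorem~\ref{th7.3} allows you to choose \emph{any} closed complement $Z$ of $N[\mathfrak{L}_d(\lambda_1(d))]$ in $W^{2,p}_0(\Omega)$; it does not require $Z$ to be the annihilator of the adjoint eigenfunctional. The paper simply takes
\[
Z=\Big\{u\in W^{2,p}_0(\Omega):\int_\Omega u\,\varphi_0\,dx=0\Big\},
\]
where $\varphi_0\gg 0$ is the principal eigenfunction of $-\Delta$. Since $\int_\Omega \psi_+\varphi_0\,dx>0$, this $Z$ is indeed a complement of $\mathrm{span}[\psi_+]$, and for any $u\gg 0$ one has $\int_\Omega u\varphi_0\,dx>0$, so $u\notin Z$. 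This avoids entirely the Krein--Rutman argument for the non-self-adjoint adjoint problem that you invoke. Your route via the strict positivity of $\psi^*_+$ is correct (the adjoint of $\mathfrak{L}_d(\lambda)$ is again a uniformly elliptic operator with drift, so the Krein--Rutman theory does apply), but it is unnecessary work.
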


\begin{proof}
We will apply  Theorem \ref{th7.3} to the operator $\mathfrak{F}_{d}:\mathbb{R}\times W^{2,p}_{0}(\O)\to L^{p}(\O)$ defined by \eqref{7.1}. Clearly, $\mathfrak{F}_{d}$ satisfies the hypotheses of  Theorem \ref{th7.3} with  $\lambda_{0}=\pm\lambda_{1}(d)$. Thus, the connected components $\mathscr{C}^{+}_{\pm}\subset\mf{F}_{d}^{-1}(0)\backslash\mathcal{T}$
are well defined and each of them satisfies one of the alternatives (i)--(iii). According to Lemma \ref{le5.3}(i), there exist $\a<\b$ such that $\l\in[\a,\b]$ if \eqref{i.1} admits a positive solution $(\l,u)$. Thus, by Theorem \ref{th5.4}, the alternative (i) cannot occur. To exclude (iii) we take
\begin{equation*}
	Z=\Big\{u\in W^{2,p}_{0}(\O): \int_{\O}u\varphi_{0} \ dx=0\Big\},
\end{equation*}
where $\varphi_{0}\gg 0$ is any principal eigenfunction associated $\s_1$. Suppose that there are $\l\in\mathbb{R}$ and $v\in Z\backslash\{0\}$ such that $(\l,v)\in\mathscr{C}^{+}_{+}$. Since $v\neq 0$,
necessarily $v\gg 0$ and hence  $\int_{\O}v\varphi_{0} \, dx >0 $ which contradicts $v\in Z$.
Hence, the alternative (iii) cannot occur neither. Therefore, there exists $\mu\neq \l_{1}(d)$ such that $(\mu,0)\in\overline{\mathscr{C}^{+}_{+}}$. In particular, this implies that $\l=\mu$ is a bifurcation value to positive solutions from $u=0$. Thus, by Theorem \ref{th3.2}(iv),  $\mu =-\l_{1}(d)$. Therefore,
$\mathscr{C}_{+}^{+}=\mathscr{C}_{-}^{+}$.
\end{proof}

As far as concerns the negative solutions, by the local analysis of Sections 2 and 3, from each of the points $(\l_{1}(d),0)$ and $(-\l_{1}(d),0)$  emanates an analytic curve of negative solutions. Let denote by  $\mathscr{C}^{-}_{+}$ and $\mathscr{C}^{-}_{-}$ the connected components of the set of non-trivial solutions $\mf{F}_{d}^{-1}(0)\backslash\mathcal{T}$ that contains the curves of negative solutions emanating from $(\l_{1}(d),0)$ and $(-\l_{1}(d),0)$, respectively. By Lemma \ref{le7.1} they can only leave the interior of the negative cone through $u=0$ and hence they consists of negative solutions, that is, $\mathscr{C}^{-}_{\pm}\subset\mathscr{N}_{d}$. By the results of Section 6, these components
might have a different behavior according to the oddity of $q$. The next result provides us with their behavior when $q\geq 4$ is odd. It has been sketched in the left plot of Figure \ref{F1}.

\begin{theorem}
\label{th7.5}
Let $q\geq 4$ be an odd integer. Then, $\mathscr{C}^{-}_{+}=\mathscr{C}^{-}_{-}$, i.e., there exists a connected component of the set of negative solutions $\mathscr{C}^{-}(=\mathscr{C}^{-}_{+}=\mathscr{C}^{-}_{-})\subset \mathscr{N}_{d}$ linking  $(\l_{1}(d),0)$ to $(-\l_{1}(d),0)$.
\end{theorem}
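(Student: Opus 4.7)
The plan is to mimic the proof of Theorem \ref{th7.4} almost verbatim, replacing \emph{positive} by \emph{negative} throughout and using the odd-$q$ a priori bounds of Section 6.1 in place of those of Section 5. I apply the unilateral global bifurcation Theorem \ref{th7.3} to $\mathfrak{F}_{d}$ at each of the bifurcation values $\pm\lambda_{1}(d)$: hypothesis (F2) is Lemma \ref{le3.1}, (F3) is Lemma \ref{le7.2}, (F5) follows from Theorem \ref{th3.2}(i), and in particular $\chi[\mathfrak{L}_{d},\pm\lambda_{1}(d)]=1\in 2\mathbb{N}+1$, so Theorem \ref{th7.3} produces two unilateral subcontinua at each bifurcation point. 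The ones that accommodate the analytic arcs of negative solutions produced by the Crandall--Rabinowitz analysis of Section 4.1 (namely $u_{\pm}(s)$ for $s<0$) are exactly $\mathscr{C}^{-}_{+}$ and $\mathscr{C}^{-}_{-}$, and by Lemma \ref{le7.1} each lies entirely in $\mathscr{N}_{d}$.

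Next, I rule out alternatives (i) and (iii) of Theorem \ref{th7.3}. For (i), the oddness of $q$ lets Lemma \ref{le6.3}(i) confine the $\lambda$-projection of $\mathscr{N}_{d}$ to $[-C_{0}(d),\lambda_{1}(d)]$, and Theorem \ref{th6.4} upgrades this to a uniform $W^{2,p}$ bound; combined with the properness of $\mathfrak{F}_{d}$ on closed bounded sets (Lemma \ref{le7.2}), this forces each $\mathscr{C}^{-}_{\pm}$ to be compact. For (iii), I use the same complement $Z=\{u\in W^{2,p}_{0}(\Omega):\int_{\Omega}u\varphi_{0}\,dx=0\}$ as in Theorem \ref{th7.4}: by Lemma \ref{le7.1}, any nonzero $v\in\mathscr{C}^{-}_{\pm}$ satisfies $v\ll 0$, so $\int_{\Omega}v\varphi_{0}\,dx<0$, which is incompatible with $v\in Z$.

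Consequently, alternative (ii) must hold, furnishing some $\mu\neq\lambda_{1}(d)$ with $(\mu,0)\in\overline{\mathscr{C}^{-}_{+}}$. To identify $\mu=-\lambda_{1}(d)$, I invoke the negative-solution counterpart of Theorem \ref{th3.2}(iv): through the substitution $v=-u$, a sequence of negative solutions of \eqref{i.1} clustering at $(\mu,0)$ transforms into a sequence of positive solutions of \eqref{vi.1} clustering at $(\mu,0)$. Since the linearization of \eqref{vi.1} at $v=0$ is again $\mathfrak{L}_{d}(\lambda)$, the normalization $\psi_{n}:=v_{n}/\|v_{n}\|_{\infty}$ from the proof of Theorem \ref{th3.2}(iv) produces, in the limit, a strongly positive eigenfunction of $\mathfrak{L}_{d}(\mu)$, whence $\mu\in\Sigma_{+}(\mathfrak{L}_{d})=\{-\lambda_{1}(d),\lambda_{1}(d)\}$ and so $\mu=-\lambda_{1}(d)$. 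The main obstacle, if any, is this identification step, but it is a routine adaptation of the positive-solution argument; all other steps follow directly from the proof of Theorem \ref{th7.4}, since in the odd-$q$ case the a priori bounds for negative solutions are structurally identical to those for positive solutions.
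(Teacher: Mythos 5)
Your proposal is correct and follows essentially the same route as the paper, whose proof of Theorem \ref{th7.5} is precisely ``repeat the proof of Theorem \ref{th7.4}, using Lemma \ref{le6.3}(i) (and Theorem \ref{th6.4}) in place of Lemma \ref{le5.3}(i) (and Theorem \ref{th5.4})''. Your explicit adaptation of Theorem \ref{th3.2}(iv) to bifurcation points of negative solutions via $v=-u$ is exactly the detail the paper leaves implicit, and it is handled correctly.
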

\begin{proof}
It follows identical patterns as the proof of Theorem \ref{th7.4}, though now one should use
Lemma \ref{le6.3}(i), instead of Lemma \ref{le5.3}(i). So, we omit the technical details.
\end{proof}

The behavior of $\mathscr{C}^{-}_{+}$ and $\mathscr{C}^{-}_{-}$ is rather different when
$q\geq 4$ is even. Actually, the next theorem establishes that, in this case, they are disjoint and unbounded.

\begin{theorem}
\label{th7.6}
Suppose $q\geq 4$ is an even integer. Then, $\mathscr{C}^{-}_{+}$ and $\mathscr{C}^{-}_{-}$ are
unbounded and disjoint, i.e., $\mathscr{C}^{-}_{+}\cap\mathscr{C}^{-}_{-}=\emptyset$. Moreover, if
 $N=1, 2$, or $N\geq 3$ and $q=4$, then
\begin{equation}
\label{7.8}
	(\lambda_{1}(d),\infty)\subset\mathcal{P}_{\l}(\mathscr{C}^{-}_{+}), \quad (-\lambda_{1}(d),-\infty)\subset\mathcal{P}_{\l}(\mathscr{C}^{-}_{-}),
\end{equation}
where $\mathcal{P}_{\l}$ stands for the $\l$-projection operator. In particular, \eqref{i.1}
has at least one negative solution whenever $|\l|>\l_{1}(d)$, as illustrated by the right plot
of Figure \ref{F1}.
\end{theorem}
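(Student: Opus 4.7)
The plan is to apply the unilateral global bifurcation Theorem \ref{th7.3} to $\mf{F}_d$ at the two bifurcation points $(\pm\l_1(d),0)$, and then invoke Lemma \ref{le6.6} decisively to force the components $\mathscr{C}^{-}_{+}$ and $\mathscr{C}^{-}_{-}$ into the disjoint half-planes $\{\l>0\}$ and $\{\l<0\}$, respectively.

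First, I would check that $\mf{F}_d$ satisfies hypotheses {\rm (C)} and {\rm (F1)--(F5)} of Theorem \ref{th7.3} (Fredholmness by Lemma \ref{le3.1}, properness by Lemma \ref{le7.2}, analyticity by inspection) and observe $\chi[\mf{L}_d,\pm\l_1(d)]=1\in 2\N+1$ by Theorem \ref{th3.2}(i). This produces the components $\mathscr{C}^{-}_{+}$ and $\mathscr{C}^{-}_{-}$, each of which, by Lemma \ref{le7.1}, consists entirely of strongly negative solutions. Alternative {\rm (iii)} of Theorem \ref{th7.3} is excluded precisely as in the proof of Theorem \ref{th7.4}, with $Z:=\{u\in W^{2,p}_0(\O):\int_\O u\v_0\,dx=0\}$: any nontrivial $v\in Z\cap \mathscr{C}^{-}_{\pm}$ would be strongly negative and so violate $\int_\O v\v_0\,dx=0$.

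The key step is disjointness. Since $d<\s_1^{-1}$, Lemma \ref{le6.6} rules out negative solutions at $\l=0$, hence $0\notin \mc{P}_\l(\mathscr{C}^{-}_{+})$. Because $\mc{P}_\l(\mathscr{C}^{-}_{+})$ is the continuous image of a connected set, it is an interval of $\R\setminus\{0\}$, and since the local Crandall--Rabinowitz negative branch at $(\l_1(d),0)$ contributes $\l$-values arbitrarily close to $\l_1(d)>0$, this interval must be contained in $(0,\infty)$. Symmetrically $\mc{P}_\l(\mathscr{C}^{-}_{-})\subset (-\infty,0)$, giving $\mathscr{C}^{-}_{+}\cap\mathscr{C}^{-}_{-}=\emptyset$. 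This confinement also excludes alternative {\rm (ii)} for $\mathscr{C}^{-}_{+}$: by Theorem \ref{th3.2} the only other candidate trivial limit point would be $(-\l_1(d),0)$, but $-\l_1(d)<0$ cannot belong to $\overline{\mc{P}_\l(\mathscr{C}^{-}_{+})}\subset [0,\infty)$. Therefore alternative {\rm (i)} holds and $\mathscr{C}^{-}_{+}$ is non-compact; the same argument applies to $\mathscr{C}^{-}_{-}$, proving the first part of the statement.

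Finally, to obtain \eqref{7.8}, I would invoke Theorem \ref{th6.5}, which under the standing dimensional hypothesis supplies uniform $W^{2,p}$-bounds for negative solutions on every compact $\l$-interval. Combined with the non-compactness of $\mathscr{C}^{-}_{+}$ and the properness of $\mf{F}_d$ (Lemma \ref{le7.2}), this forces $\mc{P}_\l(\mathscr{C}^{-}_{+})$ to be unbounded; since $\mathscr{C}^{-}_{+}\subset\{\l>0\}$, the unboundedness occurs at $+\infty$. A connected subset of $(0,\infty)$ that is unbounded above and has $\l_1(d)$ as a limit point must contain $(\l_1(d),\infty)$, yielding the first inclusion of \eqref{7.8}; the analogous argument gives the second. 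The main obstacle is the disjointness step itself, which is delicate because in general the unilateral global alternative allows both {\rm (i)} and {\rm (ii)} to hold simultaneously; the argument relies essentially on Lemma \ref{le6.6}, since without the non-existence at $\l=0$ nothing prevents the two branches from merging into a single component linking $(\pm\l_1(d),0)$, which is indeed what happens for $q$ odd (Theorem \ref{th7.5}).
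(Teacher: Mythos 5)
Your proposal is correct and follows essentially the same route as the paper: the components come from Theorem \ref{th7.3} with alternative (iii) excluded as in Theorem \ref{th7.4}, Lemma \ref{le6.6} (no negative solutions at $\l=0$) yields disjointness and, together with Theorem \ref{th3.2}, rules out alternative (ii), so the components are unbounded, and the a priori bounds of Theorem \ref{th6.5} on compact $\l$-intervals then force the $\l$-projections to cover $(\l_1(d),\infty)$ and $(-\infty,-\l_1(d))$, i.e.\ \eqref{7.8}. Your slightly more explicit confinement argument ($\mc{P}_\l(\mathscr{C}^-_\pm)$ being intervals avoiding $0$) is just an unpacking of the paper's appeal to Lemma \ref{le6.6}, not a different method.
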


\begin{proof}
By Lemma \ref{le6.6}, \eqref{i.1} cannot admit a negative solution at $\l=0$. Thus,
$\mathscr{C}^{-}_{+}\cap\mathscr{C}^{-}_{-}=\emptyset$. According to Theorem \ref{th7.3},
$\mathscr{C}^{-}_{+}$ and $\mathscr{C}^{-}_{-}$ must satisfy some of the alternatives (i)--(iii).
By Theorem \ref{th3.2}(iv) and Lemma \ref{le6.6}, the alternative (ii) cannot occur. Arguing as in
the proof of Theorem \ref{th7.5}, the option (iii) is excluded to occur too. Therefore,  (i) occurs, i.e.,
$\mathscr{C}^{-}_{+}$ and $\mathscr{C}^{-}_{-}$ are unbounded. This concludes the proof of the first part of the result. Finally, suppose that $N=1, 2$, or $N=3$ and $q=4$. Then, thanks to Theorem \ref{th6.5}, for
every compact interval $J\subset\mathbb{R}$, the subsets
	\begin{equation*}
	\mathscr{C}^{-}_{\pm}(J):=\{(\l,u)\in\mathscr{C}^{-}_{\pm}: \;\, \l\in J\}\subset\mathscr{C}^{-}_{\pm}
	\end{equation*}
	are bounded in $\mathbb{R}\times W^{2,p}_{0}(\O)$. As $\mathscr{C}^{-}_{\pm}$ are unbounded,
\eqref{7.8} holds.
\end{proof}
\vspace{0.2cm}
\par

\subsection{The case when $d=\s_{1}^{-1}$} This section shows the validity of the global bifurcation diagrams sketched in Figure \ref{F2}, by invoking \cite[Th. 6.5]{JJ4}, which follows by adapting some arguments of Dancer \cite{Da,Da73,Da732} and Buffoni and Tolland \cite{BT}. For any
proper analytic map, $\mathfrak{F}:\mathbb{R}\times U\to V$, satisfying (F1)--(F3) of Section 4.2,
$(\lambda,u)\in\mathbb{R}\times U$ is said to be \emph{regular} with respect to $\mathfrak{F}$ if $D_{u}\mathfrak{F}(\lambda,u)\in GL(U,V)$. In our setting, \cite[Th. 6.5]{JJ4} reads as follows.

\begin{theorem}
\label{th7.7}
Let $\mathfrak{F}\in\mathcal{H}(\mathbb{R}\times U,V)$ be an analytic map satisfying {\rm (F1)--(F3)} of Section 4.2 such that it is is proper on bounded and closed subsets of $\mathbb{R}\times U$. Suppose that
	$\mf{F}^{-1}(0)$ possesses a local analytic branch, $\g:(0,\e)\to \mathbb{R}\times U$, emanating from $(0,0)$ and consisting of regular points for sufficiently small $\e>0$. Then, $\g$ admits  a prolongation to a global locally injective continuous path $\G: (0,+\infty)\to \mathbb{R}\times U$ on $\mathfrak{F}^{-1}(0)$ satisfying one of the following non-excluding alternatives. Either
	\begin{enumerate}
		\item[{\rm (a)}]   $\lim_{t\ua \infty} \|\Gamma(t)\|_{\mathbb{R}\times U}= +\infty$, or
		\item[{\rm (b)}]   $\Gamma$ is a closed loop, i.e., there exists $T>0$   such that $\Gamma(T)=(0,0)$.
	\end{enumerate}
\end{theorem}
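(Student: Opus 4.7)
The plan is to prolong the analytic germ $\gamma$ by alternating between two procedures: use the analytic implicit function theorem to continue through regular points, and use the Lyapunov--Schmidt reduction combined with the Weierstrass--Malgrange preparation theorem and the Newton--Puiseux algorithm (exactly as in Section 4.2 above) to cross singular points. Let $S \subset \mathfrak{F}^{-1}(0)$ denote the set of points $(\lambda,u)$ where $D_u\mathfrak{F}(\lambda,u)\notin GL(U,V)$. The first thing I would verify is that $S$ is locally discrete along any analytic arc contained in $\mathfrak{F}^{-1}(0)$: near any such point, after a Lyapunov--Schmidt reduction, $S$ corresponds to the zero set of the analytic function $\det D_x\mathfrak{G}(\lambda,x)$, which is either identically zero or has isolated zeros; identical vanishing is incompatible with the regular germ we have at $(0,0)$.

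Starting from $\gamma$, let $T^\ast$ be the supremum of $T>0$ such that $\gamma$ admits an analytic extension $\Gamma\colon (0,T)\to \mathfrak{F}^{-1}(0)$ through regular points only. On $(0,T^\ast)$, the implicit function theorem and uniqueness of analytic continuation give existence, uniqueness, and local injectivity of the extension, and the arc-length can be chosen so that $\|\Gamma'(t)\|=1$, which is consistent with later reparametrisations. If $T^\ast=+\infty$ and $\Gamma$ is unbounded, alternative (a) holds. If $T^\ast=+\infty$ and $\Gamma$ is bounded, then by the properness of $\mathfrak{F}$ on bounded closed subsets, every sequence $\Gamma(t_n)$ with $t_n\uparrow\infty$ admits a convergent subsequence, and a standard compactness argument produces an accumulation point in $\mathfrak{F}^{-1}(0)$, whence the path necessarily returns to a point already visited; by analyticity and the local injectivity, this closes up into a loop, and choosing the first time this happens yields alternative (b).

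The substantive case, and the main obstacle, is $T^\ast<+\infty$, where properness again yields $(\lambda^\ast,u^\ast):=\lim_{t\uparrow T^\ast}\Gamma(t)\in S$ (if the limit did not exist one would violate boundedness on $(0,T^\ast)$ after a further extraction). To cross this singularity I would perform a Lyapunov--Schmidt reduction at $(\lambda^\ast,u^\ast)$, reducing locally to the scalar analytic equation $\mathfrak{G}(\lambda,x)=0$ on a neighbourhood of $(\lambda^\ast,0)$ in $\mathbb{R}^2$. By the Weierstrass preparation theorem, $\mathfrak{G}$ factors as a unit times a distinguished polynomial, and the Newton--Puiseux algorithm produces finitely many analytic branches of $\mathfrak{G}^{-1}(0)$ passing through $(\lambda^\ast,0)$, parameterised by fractional powers. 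One of these branches is the incoming arc $\Gamma\vert_{(0,T^\ast)}$; among the remaining branches one can continue with a distinct analytic branch so as to maintain local injectivity (at worst, the path and its prolongation share only the singular point). This defines the extension past $T^\ast$, and iterating (every singular point hit is isolated, so only finitely many occur on any bounded piece of the prolonged curve) produces a maximal locally injective continuous path $\Gamma\colon(0,T_{\max})\to \mathfrak{F}^{-1}(0)$ which is real-analytic away from a discrete singular set.

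Finally, I would show $T_{\max}=+\infty$ and conclude the dichotomy. If $T_{\max}<+\infty$ were finite, then either the path leaves every bounded set in finite arc-length (impossible since the tangent has unit norm) or it stays bounded, in which case properness and the Puiseux analysis above would allow a further extension, contradicting maximality. Hence $T_{\max}=+\infty$. If the image of $\Gamma$ is unbounded, alternative (a) holds. If it is bounded, then by properness there exist $t_n\uparrow\infty$ and $(\lambda_0,u_0)\in\mathfrak{F}^{-1}(0)$ with $\Gamma(t_n)\to (\lambda_0,u_0)$; applying the local analytic/Puiseux description at $(\lambda_0,u_0)$ shows that the curve actually revisits previously traversed points; arguing as in Buffoni--Toland, the first return must be to the starting point $(0,0)$, yielding the closed loop of alternative (b). Throughout, the hardest point is the analytic prolongation across singular points, because the ambiguity in selecting the outgoing Puiseux branch is what must be controlled to preserve a single, locally injective global path.
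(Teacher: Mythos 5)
The paper does not prove Theorem \ref{th7.7} at all: it is quoted from \cite[Th. 6.5]{JJ4}, which in turn rests on the analytic global bifurcation theory of Dancer \cite{Da73,Da732} and Buffoni--Toland \cite{BT}. Your proposal tries to give a self-contained proof, but it has genuine gaps exactly at the points where that machinery is required. First, at a singular point $(\lambda^{*},u^{*})$ reached by the path, hypothesis (F3) guarantees a one-dimensional kernel only at $(0,0)$; at $(\lambda^{*},u^{*})$ the kernel of $D_{u}\mathfrak{F}$ may have any finite dimension $n\geq 1$, so the Lyapunov--Schmidt reduction produces an analytic map from (an open set of) $\mathbb{R}\times\mathbb{R}^{n}$ into $\mathbb{R}^{n}$, not a scalar equation on a neighbourhood of $(\lambda^{*},0)$ in $\mathbb{R}^{2}$, and the Weierstrass preparation plus Newton--Puiseux argument you invoke does not apply directly; what is needed is the structure theory of one-dimensional real-analytic varieties, which is precisely what Dancer and Buffoni--Toland supply. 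Relatedly, your discreteness claim for the singular set rests on the assertion that $\det D_{x}\mathfrak{G}(\lambda,x)$ ``is either identically zero or has isolated zeros'', which is false for an analytic function of two or more variables; the correct statement concerns its restriction to a fixed analytic arc, and even then you must rule out that the outgoing branch selected after a crossing consists entirely of singular points.

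Second, the bounded cases are not closed. When $\Gamma$ is bounded, properness yields accumulation points, but a locally injective continuous path can accumulate on a point or on a continuum without ever revisiting a point (it can spiral), so ``the path necessarily returns to a point already visited'' does not follow; moreover alternative (b) demands a return exactly to $(0,0)$, not merely a self-intersection. Excluding spiraling and forcing the first return to be at the bifurcation point is the heart of the Buffoni--Toland argument (it uses that through every point of $\mathfrak{F}^{-1}(0)$ there pass only finitely many analytic arcs, together with a covering and first-return argument), and at precisely that step you write ``arguing as in Buffoni--Toland'', which in a blind proof is circular: it is the content of the theorem being proved. In summary, your outline reproduces the known strategy, but the two decisive steps --- continuation across singular points with possibly higher-dimensional kernels, and converting boundedness plus properness into a closed loop through $(0,0)$ --- are missing; the paper itself simply cites \cite[Th. 6.5]{JJ4} for them.
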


First of all, we will prove that, for any integer  $q\geq 4$, there is a loop of positive solutions of \eqref{i.1} emanating from $u=0$ at $\l=\pm\l_1(\s_1^{-1})=0$. The existence of a connected component of the set of positive solutions $\mathscr{S}_{d}$, bifurcating from $(\lambda,u)=(0,0)$ has been already established in Section 4. More precisely,
we already know that there emanate from $(0,0)$  two analytic arcs of positive solutions $\gamma_{i}:(0,\varepsilon)\to \mathbb{R}\times W^{2,p}_{0}(\Omega)$, $\gamma_{i}(\lambda)=(\lambda,u_{i}(\lambda))$, with  $\lim_{\lambda\downarrow 0}u_{i}(\lambda)=0$, $i\in\{1,2\}$. The connected components of the set of positive solutions $\mathscr{S}_{d}$ containing to each of the curves $\gamma_{1}$ and $\gamma_{2}$, locally at $(0,0)$, will be called $\mathscr{C}^{+}_{1}$ and $\mathscr{C}^{+}_{2}$, respectively.

\begin{theorem}
\label{th7.8}
Under the previous assumptions, $\mathscr{C}^{+}_{1}=\mathscr{C}^{+}_{2}$. Moreover each of the local curves $\gamma_{i}:(0,\varepsilon)\to \mathbb{R}\times W^{2,p}_{0}(\Omega)$ can be continued to a global locally injective continuous curve $\Gamma_{i}: (0,T)\to\mathscr{C}^{+}_{i}$ such that $\Gamma_{i}|_{[T-\delta,T)}=\gamma_{j}$ for some $\delta>0$ and  $j\in\{1,2\}\backslash\{i\}$. Thus, there is a loop of positive solutions of \eqref{i.1} with vertex at $(0,0)$.
\end{theorem}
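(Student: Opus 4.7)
The strategy is to invoke the global analytic continuation result Theorem~\ref{th7.7} with $d=\sigma_{1}^{-1}$, applied to $\mathfrak{F}_{d}$ starting from each of the local arcs $\gamma_{1},\gamma_{2}$ constructed in Section 4.2. The hypotheses are in place: $\mathfrak{F}_{d}$ is polynomial in $u$ and analytic in $\lambda$, so $\mathfrak{F}_{d}\in\mathcal{H}(\mathbb{R}\times W^{2,p}_{0}(\Omega),L^{p}(\Omega))$; Lemma~\ref{le3.1} together with the fact that $D_{u}\mathfrak{F}_{d}(\lambda,u)-\mathfrak{L}_{d}(\lambda)$ is a compact perturbation (it factors through the compact embedding $W^{2,p}_{0}\hookrightarrow L^{\infty}$) yields (F2) globally; and Lemma~\ref{le7.2} supplies the required properness on bounded closed subsets. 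That each $\gamma_{i}$ consists of regular points of $\mathfrak{F}_{d}$ away from $(0,0)$ is a consequence of the Weierstrass--Malgrange factorization
$$g(\lambda,x)=c(\lambda,x)\bigl[\lambda^{2}+c_{1}(x)\lambda+c_{2}(x)\bigr],\qquad c(0,0)\neq 0,$$
derived in Section 4.2: the branches $\gamma_{1},\gamma_{2}$ are the two \emph{simple} roots of the quadratic in $\lambda$ parametrized by $x>0$, so $\partial_{\lambda}\mathfrak{G}_{d}$ does not vanish along either $\gamma_{i}\setminus\{(0,0)\}$, and this lifts, through the Lyapunov--Schmidt correspondence, to $D_{u}\mathfrak{F}_{d}\in GL(W^{2,p}_{0}(\Omega),L^{p}(\Omega))$ along the corresponding arc.

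Theorem~\ref{th7.7} then produces, for each $i\in\{1,2\}$, a global locally injective continuous prolongation $\Gamma_{i}$ of $\gamma_{i}$ satisfying either (a) $\|\Gamma_{i}(t)\|_{\mathbb{R}\times W^{2,p}_{0}}\to\infty$, or (b) $\Gamma_{i}(T_{i})=(0,0)$ for some $T_{i}>0$. To rule out (a), I note that by Lemma~\ref{le5.2} every positive solution at $d=\sigma_{1}^{-1}$ satisfies $\lambda>0$, while Lemma~\ref{le5.3}(i) with $\lambda_{1}(\sigma_{1}^{-1})=0$ (cf.~\eqref{i.4}) gives $\lambda\leq C_{0}(\sigma_{1}^{-1})$; Theorem~\ref{th5.4} then furnishes a uniform $W^{2,p}$-bound on the whole of $\mathscr{S}_{d}$. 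Moreover $\Gamma_{i}((0,T_{i}))\subset\mathscr{S}_{d}$: by Lemma~\ref{le7.1} the continuation can leave the positive cone only through the trivial line $\mathcal{T}=\{u=0\}$, and by Theorem~\ref{th3.2}(ii),(iv) the unique point of $\mathcal{T}$ where positive solutions bifurcate at $d=\sigma_{1}^{-1}$ is $(0,0)$, so the minimality of $T_{i}$ prevents $\Gamma_{i}$ from meeting $\mathcal{T}$ earlier. Hence $\Gamma_{i}([0,T_{i}))\subset\mathscr{S}_{d}\cup\{(0,0)\}$ is bounded, (a) fails, and (b) yields a finite first return time $T_{i}$.

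To identify the returning branch, observe that $\Gamma_{i}(t)\to(0,0)$ inside $\mathscr{S}_{d}$ as $t\uparrow T_{i}$, while the Newton--Puiseux analysis of Section 4.2 shows that the only analytic arcs of positive solutions emanating from $(0,0)$ are $\gamma_{1}$ and $\gamma_{2}$. Consequently, a terminal segment $\Gamma_{i}|_{[T_{i}-\delta,T_{i})}$ coincides, up to reparametrization, with one of these two arcs. If that arc were $\gamma_{i}$ itself, then since $\Gamma_{i}$ also agrees with $\gamma_{i}$ on an initial segment and $\gamma_{i}$ is a simple embedded analytic arc with $(0,0)$ as its unique accumulation in $\mathcal{T}$, the uniqueness of analytic continuation through regular points of $\gamma_{i}$---the mechanism underpinning Theorem~\ref{th7.7}, see \cite{Da,Da73,Da732,BT,JJ4}---would force $\Gamma_{i}$ to retrace $\gamma_{i}$ throughout, contradicting the departure from and return to $(0,0)$ by a path that stays locally injective. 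Therefore the return must occur along $\gamma_{j}$ with $j\in\{1,2\}\setminus\{i\}$, which immediately gives $\gamma_{j}\subset\mathscr{C}^{+}_{i}$ and thus $\mathscr{C}^{+}_{1}=\mathscr{C}^{+}_{2}$, closing the loop.

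The main obstacle is this final step: rigorously excluding a return along $\gamma_{i}$ itself requires a careful appeal to the uniqueness of analytic continuation through regular points, a property guaranteed by the Dancer--Buffoni--Toland framework that underlies Theorem~\ref{th7.7} but whose application to the present bifurcation scheme at a degenerate eigenvalue ($\chi[\mathfrak{L}_{d},0]=2$) deserves to be verified explicitly in terms of the reduced map $\mathfrak{G}_{d}$.
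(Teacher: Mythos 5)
Your architecture coincides with the paper's: check regularity of the local arcs through the Lyapunov--Schmidt reduction and the Weierstrass--Malgrange factorization, apply Theorem \ref{th7.7} at $d=\sigma_1^{-1}$, exclude alternative (a) via Lemmas \ref{le5.2}, \ref{le5.3}(i) and Theorem \ref{th5.4} together with Lemma \ref{le7.1}, and identify the returning terminal segment with the other local arc. However, your regularity step contains a genuine error. The Lyapunov--Schmidt correspondence of Section 4.2 identifies the regular points of $\mathfrak{F}_{d}$ --- i.e., the points where $D_{u}\mathfrak{F}_{d}(\lambda,u)\in GL(W^{2,p}_{0}(\Omega),L^{p}(\Omega))$, which is precisely the notion of regularity demanded by Theorem \ref{th7.7} --- with the points where $D_{x}\mathfrak{G}_{d}(\lambda,x)\neq 0$, the derivative of the reduced map with respect to the state variable $x$, not with respect to $\lambda$. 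You deduce $\partial_{\lambda}\mathfrak{G}_{d}\neq 0$ from the simplicity of the two $\lambda$-roots of the Weierstrass polynomial and then assert that this ``lifts'' to $D_{u}\mathfrak{F}_{d}\in GL$; that implication is false in general: at a fold (turning) point of a solution branch the root in $\lambda$ is still simple, so $\partial_{\lambda}\mathfrak{G}_{d}\neq 0$, while $D_{x}\mathfrak{G}_{d}=0$ and $D_{u}\mathfrak{F}_{d}$ is singular. The correct verification, which is what the paper carries out, writes $\mathfrak{G}_{d}(\lambda,x)=x\,c(\lambda,x)(\lambda-\varphi_{1}(x))(\lambda-\varphi_{2}(x))$ with $c(0,0)\neq 0$ and computes that a nontrivial zero is singular (i.e., $D_{x}\mathfrak{G}_{d}=0$) if and only if $\varphi_{1}(x)=\varphi_{2}(x)$ or $\varphi_{j}'(x)=0$ for some $j$; the Puiseux expansions \eqref{iv.7}--\eqref{iv.8} exclude both options for $x\neq 0$ small (the two branches have distinct asymptotics, one linear in $\lambda$ and one of order $\lambda^{\frac{1}{q-2}}$, with non-vanishing derivatives), whence $\gamma_{i}((0,\varepsilon))$ consists of regular points. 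With this replacement your argument is sound.

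On the step you single out as the main obstacle --- excluding a return of $\Gamma_{i}$ along $\gamma_{i}$ itself --- the paper proceeds exactly as you suggest: since, by the local analysis of Section 4, the positive solutions near $(0,0)$ are exhausted by the graphs of $\gamma_{1}$ and $\gamma_{2}$, the terminal segment of the locally injective continuation $\Gamma_{i}$ must coincide, modulo reparametrization, with one of them, and the local injectivity supplied by Theorem \ref{th7.7} (rooted in the uniqueness of analytic continuation through regular points in the Dancer--Buffoni--Toland scheme) rules out retracing $\gamma_{i}$; so your caution is reasonable, but no further argument beyond the one you sketch is needed, and this is not a gap relative to the paper's own treatment.
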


\begin{proof}
Once given the local curve $\gamma_{1}:(0,\varepsilon)\to \mathbb{R}\times W^{2,p}_{0}(\Omega)$ and the component  $\mathscr{C}^{+}_{1}$, in order to apply Theorem \ref{th7.7}, we should make sure that, for sufficiently small $\varepsilon>0$, the set $\gamma_{1}((0,\varepsilon))\subset \mathfrak{F}_{d}^{-1}(0)$
consists of regular points of $\mathfrak{F}_{d}$. By the local analysis already done in Section 4, the regular and singular points of $\mathfrak{F}_{d}$ in $\mathfrak{F}_{d}^{-1}(0)\cap \mathcal{V}$ are in analytical correspondence with those of the reduced map $\mathfrak{G}_{d}(\l,x)=xg_{d}(\l,x)$ in  $\mathfrak{G}_{d}^{-1}(0)\cap \mc{U}$, where $\mathcal{V}$ and $\mathcal{U}$ are open neighborhoods of $\mathbb{R}\times U$ and $\mathbb{R}^{2}$, respectively, containing $(0,0)$. So, it suffices to prove that, near $(0,0)$, the set $\mathfrak{G}_{d}^{-1}(0)$ does not contain any singular point of $\mathfrak{G}_{d}$ different form $(0,0)$. By Theorem \ref{th3.2}(ii), $\chi[\mathfrak{L}_{d},0]=2$. Thus,
 it follows from \eqref{nueva} that
\begin{equation*}
	\chi[\mathfrak{L}_{d},0] = \ord_{\l=0}g_{d}(\l,0)=2.
\end{equation*}
Consequently, by the Weierstrass--Malgrange preparation theorem, shortening the neighborhood $\mathcal{U}=\mathcal{U}_{\lambda}\times\mathcal{U}_{x}\subset\mathbb{R}^{2}$,  if necessary,  there exists an analytic function $c:\mathcal{U}\to\mathbb{R}$ such that  $c(0,0)\neq 0$, plus $\chi=2$ analytic functions, $c_{j}:\mathcal{U}_{x}\to\mathbb{R}$, $c_{j}(0)=0$, $j=1,2$, such that
	\begin{equation*}
	g_{d}(\lambda,x)=c(\lambda,x)\left[\lambda^{2}+c_{1}(x)\lambda+c_{2}(x)\right].
	\end{equation*}
Hence, we can rewrite $\mathfrak{G}_{d}:\mathcal{U}\to\mathbb{R}$ as
$$
  \mathfrak{G}_{d}(\lambda,x)=xc(\lambda,x)\left[\lambda^{2}+c_{1}(x)\lambda+c_{2}(x)\right].
$$
By the local analysis already done in Section 4 (see Figure \ref{F}), for every $x\in \mathcal{U}_{x}\backslash\{0\}$, the equation $\mathfrak{G}_{d}(\l,x)=0$ has two positive different solutions in $\lambda\in\mathcal{U}_{\lambda}$. Thus, there are two analytic maps,  $\varphi_{j}:(-\delta,\delta)\backslash\{0\}\to\mathbb{R}$, $j=1,2$, such that
	\begin{equation*}
	\mathfrak{G}_{d}(\l,x)=x c(\l,x)(\lambda-\varphi_{1}(x))(\l-\varphi_{2}(x)), \quad x\in\mathcal{U}_{x}\backslash\{0\}.
	\end{equation*}
By a direct computation if follows that $(\l,x)\in\mathfrak{G}_{d}^{-1}(0)\cap\mathcal{U}$, $(\l,x)\neq(0,0)$, is a singular point, i.e.,  $D_{x}\mathfrak{G}_{d}(\lambda,x)=0$,  if and only if $\varphi_{1}(x)=\varphi_{2}(x)$ or $\varphi'_{j}(x)=0$ for some $j=1,2$. According to
\eqref{iv.7} and \eqref{iv.8}, for sufficiently small $\mathcal{U}$, this is not possible. Therefore, $\gamma_{1}:(0,\varepsilon)\to\mathbb{R}\times W^{2,p}_{0}(\Omega)$ consists of regular points for
sufficiently small $\varepsilon>0$. By Theorem \ref{th7.7}, $\gamma_{1}$ admits a prolongation
to a global locally injective continuous map $\Gamma_{1}:(0,\infty)\to\mathbb{R}\times W^{2,p}_{0}(\Omega)$ on $\mathfrak{F}_{d}^{-1}(0)$ satisfying one of the alternatives (a) or (b). Due to Lemma \ref{le7.1}, $\Gamma_{1}(0,\infty)\subset\mathscr{C}^{+}_{1}$. Thanks to Theorem \ref{th5.4}, $\Gamma_{1}(0,\infty)$ is bounded. Therefore, the alternative (a) cannot occur. Consequently, there exists some $T>0$ such that $\Gamma_{1}(T)=(0,0)$. As in a neighborhood of $(0,0)$ the set of positive solutions consists of
the graphs of $\gamma_{1}$ and $\gamma_{2}$, being  $\Gamma_{1}$ is locally injective, it follows that, modulus a re-parametrization (if necessary), $\Gamma_{1}|_{(T-\delta,T]}=\gamma_{2}$. This implies, in particular, that $\mathscr{C}^{+}_{1}=\mathscr{C}^{+}_{2}$ and concludes the proof.
\end{proof}

As illustrated by Figure \ref{F2}, the behavior of the negative solutions differs according to the oddity of $q$.
\par
Suppose $q\geq 5$ is odd.  Then, by the local analysis of Section 4 summarized in Figure \ref{F}, we already know that in a neighborhood of $(\lambda,u)=(0,0)$ there emanate two analytic arcs of negative solutions,  $\gamma_{i}:(-\varepsilon,0)\to \mathbb{R}\times W^{2,p}_{0}(\Omega)$, $\gamma_{i}(\lambda)=(\lambda,u_{i}(\lambda))$, such that $\lim_{\lambda\uparrow 0}u_{i}(\lambda)=0$, $i\in\{1,2\}$. The components of the set of negative solutions $\mathscr{N}_{d}$ containing the curves $\gamma_{1}$ and $\gamma_{2}$ will be subsequently denoted by $\mathscr{C}^{-}_{1}$ and $\mathscr{C}^{-}_{2}$, respectively.
In this case, adapting the proof of Theorem \ref{th7.8}, the following result holds.

\begin{theorem}
\label{th7.9}
$\mathscr{C}^{-}_{1}=\mathscr{C}^{-}_{2}$ if $q\geq 5$ is odd. Moreover each of the local curves $\gamma_{i}:(-\varepsilon,0)\to \mathbb{R}\times W^{2,p}_{0}(\Omega)$  can be continued to a global locally injective continuous curve $\Gamma_{i}: (-T,0)\to\mathscr{C}^{-}_{i}$ such that $\Gamma_{i}|_{(-T,-T+\delta]}=\gamma_{j}$ for some $\d>0$ and $j\in\{1,2\}\backslash\{i\}$.
Thus, there is a loop of negative solutions of \eqref{i.1} with vertex at $(0,0)$, as sketched by the left picture of Figure \ref{F2}.
\end{theorem}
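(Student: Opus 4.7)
The plan is to mimic the proof of Theorem \ref{th7.8}, now applied to the two negative branches $\gamma_1,\gamma_2$ described in Section 4.2 emanating from $(0,0)$ when $q\geq 5$ is odd, as sketched in the left plot of Figure \ref{F2}. First I would verify that, for sufficiently small $\varepsilon>0$, the traces $\gamma_i((-\varepsilon,0))$ consist of regular points of $\mathfrak{F}_d$. By the Lyapunov--Schmidt isomorphism recalled in Section 4.2, this regularity is equivalent to the non-vanishing of $D_x\mathfrak{G}_d$ along the two branches of the reduced equation $\mathfrak{G}_d(\lambda,x)=xg_d(\lambda,x)=0$. Since $\chi[\mathfrak{L}_d,0]=2$, the Weierstrass--Malgrange preparation theorem yields the factorization $g_d(\lambda,x)=c(\lambda,x)(\lambda-\varphi_1(x))(\lambda-\varphi_2(x))$ with $c(0,0)\neq 0$ and analytic $\varphi_j$ vanishing at $0$; the Newton--Puiseux expansions \eqref{iv.8} then guarantee $\varphi_1(x)\neq\varphi_2(x)$ and $\varphi_j'(x)\neq 0$ for $0<|x|$ small, so that $D_x\mathfrak{G}_d\neq 0$ along both analytic arcs, exactly as in the proof of Theorem \ref{th7.8}.

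Next I would apply Theorem \ref{th7.7} to $\mathfrak{F}_d$, starting from $\gamma_1$ (after the reparametrization $t\mapsto -t$ to fit the statement of Theorem \ref{th7.7}). The required properness on closed bounded sets is supplied by Lemma \ref{le7.2}, and analyticity is immediate since $q$ is an integer. This produces a globally defined, locally injective continuous prolongation $\Gamma_1:(-T_{\max},0)\to\mathfrak{F}_d^{-1}(0)$ satisfying either alternative (a) $\|\Gamma_1(t)\|_{\mathbb{R}\times W^{2,p}_0(\Omega)}\to\infty$ as $t\downarrow -T_{\max}$, or alternative (b) $\Gamma_1$ is a closed loop, i.e., $\Gamma_1(-T)=(0,0)$ for some finite $T>0$. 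By Lemma \ref{le7.1}, the image of $\Gamma_1$ cannot leave the interior of the negative cone except through the trivial branch, so $\Gamma_1$ is contained in $\mathscr{C}^-_1\subset\mathscr{N}_d$.

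The key step is then excluding alternative (a). To this end I would combine the $\lambda$-bounds of Lemma \ref{le6.3}(i), which give $\lambda\in[-C_0(d),\lambda_1(d)]$ for every negative solution, with the $W^{2,p}_0$-a priori bounds of Theorem \ref{th6.4} on every compact $\lambda$-interval; since $q$ is odd, Theorem \ref{th6.4} applies with no restriction on the spatial dimension $N$. Hence $\Gamma_1$ is contained in a bounded subset of $\mathbb{R}\times W^{2,p}_0(\Omega)$, so alternative (a) is ruled out and alternative (b) must hold. Because near $(0,0)$ the set of negative solutions coincides with the union of the graphs of $\gamma_1$ and $\gamma_2$, and because $\Gamma_1$ is locally injective, a suitable reparametrization forces $\Gamma_1|_{(-T,-T+\delta]}=\gamma_2$ for some $\delta>0$. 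This identifies $\mathscr{C}^-_1$ with $\mathscr{C}^-_2$ and yields the desired loop of negative solutions with vertex at $(0,0)$.

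The main subtlety I anticipate is the Weierstrass--Malgrange/Newton--Puiseux verification that $\varphi_1\neq\varphi_2$ and $\varphi_j'\neq 0$ on a pointed neighborhood of $x=0$, since one must be sure that none of the higher-order terms absorbed in the $O(\lambda)$ and $O(\lambda^{1/(q-2)})$ remainders in \eqref{iv.8} can spoil the required regularity; fortunately, the differing orders of vanishing of the two Puiseux branches, $\lambda\sim x$ versus $\lambda\sim x^{q-2}$, make this a routine matter, and the remainder of the argument then reduces to a faithful transcription of the proof of Theorem \ref{th7.8}.
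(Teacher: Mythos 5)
Your proposal is correct and follows essentially the paper's own route: the paper proves Theorem \ref{th7.9} precisely by adapting the proof of Theorem \ref{th7.8} to the two negative branches given by \eqref{iv.8}, with the regularity of the local arcs checked through the same Weierstrass--Malgrange/Newton--Puiseux factorization and alternative (a) of Theorem \ref{th7.7} excluded via the $\lambda$-bounds of Lemma \ref{le6.3}(i) together with the a priori bounds of Theorem \ref{th6.4} (valid for all $N$ since $q$ is odd), exactly as you do.
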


Now, suppose that $q$ is even. Then, according to the analysis carried out in Section 4, we already know that there are two analytic curves of negative solutions bifurcating from $(0,0)$: One in the direction of $\l>0$ and another in the direction of $\l<0$. Subsequently, we denote by $\mathscr{C}^{-}_{+}$ (resp. $\mathscr{C}^{-}_{-}$)  the connected component of the set of negative solutions $\mathscr{N}_{d}$ emanating from $(0,0)$ in the direction $\l>0$ (resp. $\l<0$). The next result provides us with their global behavior.

\begin{theorem}
	Suppose  $q\geq 4$ is an even integer. Then, $\mathscr{C}^{-}_{+}$ and  $\mathscr{C}^{-}_{-}$ are unbounded and disjoint, i.e., $\mathscr{C}^{-}_{+}\cap\mathscr{C}^{-}_{-}=\emptyset$. Moreover, if
 $N=1, 2$, or $N\geq 3$ and $q=4$, then
	\begin{equation}
\label{7.9}
	\mathcal{P}_{\l}(\mathscr{C}^{-}_{+})=(0,\infty), \quad \mathcal{P}_{\l}(\mathscr{C}^{-}_{-})=(-\infty,0),
	\end{equation}
as sketched in the right picture of Figure \ref{F2}.
In particular, \eqref{i.1} possesses at least one negative solution for every  $\l\neq 0$.
\end{theorem}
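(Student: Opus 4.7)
The plan is to adapt the approach of Theorems \ref{th7.8} and \ref{th7.9}, applying Theorem \ref{th7.7} to the local analytic branches of negative solutions and exploiting Lemma \ref{le6.6} (no negative solutions at $\lambda=0$ when $d\leq\sigma_1^{-1}$). The argument splits naturally into three parts: first, prove $\mathscr{C}^-_+\cap\mathscr{C}^-_-=\emptyset$ by a connectedness argument combined with Lemma \ref{le6.6}; next, use this disjointness to rule out the loop alternative of Theorem \ref{th7.7}, obtaining unboundedness; lastly, read off the $\lambda$-projections from the a priori bounds of Theorem \ref{th6.5} available in the subcritical regime $N=1,2$, or $N\geq 3$ and $q=4$.

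For disjointness I would first observe that each $\mathscr{C}^-_\pm$ consists entirely of strongly negative solutions: this subset is open in $\mathscr{C}^-_\pm$ (since $\{u\ll 0\}$ is open in $W^{2,p}_0(\Omega)$) and closed by Lemma \ref{le7.1} together with the fact that $(\lambda,0)\notin\mathscr{C}^-_\pm\subset\mathfrak{F}_d^{-1}(0)\setminus\mathcal{T}$, hence by connectedness it exhausts the component. If $\mathscr{C}^-_+\cap\mathscr{C}^-_-\neq\emptyset$, the two components coincide and the common set $\mathscr{C}$ is a connected set of strongly negative solutions meeting both half-lines $\lambda>0$ and $\lambda<0$; then $\mathcal{P}_\lambda(\mathscr{C})$ is a real interval containing values of both signs, hence containing $0$, yielding a point $(0,u)\in\mathscr{C}$ with $u\ll 0$, in contradiction with Lemma \ref{le6.6}.

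For unboundedness I would rerun the Weierstrass--Malgrange analysis from the proof of Theorem \ref{th7.8}: since $\chi[\mathfrak{L}_d,0]=2$, the reduced function admits the factorisation $g_d(\lambda,x)=c(\lambda,x)(\lambda-\varphi_1(x))(\lambda-\varphi_2(x))$ with $\varphi_1,\varphi_2$ analytic, and the Puiseux expansions \eqref{iv.7} ensure that these branches remain distinct and have non-zero derivatives on a punctured neighborhood of $(0,0)$. Hence the local arc of negative solutions entering $\lambda>0$ consists of regular points, and Theorem \ref{th7.7} extends it to a global locally injective path $\Gamma_+\subset\mathscr{C}^-_+$ of negative solutions. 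If the loop alternative held, so that $\Gamma_+(T)=(0,0)$ for some $T>0$, local injectivity would force $\Gamma_+$ to re-enter $(0,0)$ through a local branch different from the one it left; since the only other local negative arc is the germ of $\mathscr{C}^-_-$, this would imply $\mathscr{C}^-_+=\mathscr{C}^-_-$, contradicting the previous paragraph. Therefore alternative (a) of Theorem \ref{th7.7} holds and $\mathscr{C}^-_+$ is unbounded; the same reasoning handles $\mathscr{C}^-_-$.

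Under the subcritical hypothesis, Theorem \ref{th6.5} bounds $\mathscr{N}_d(J)$ in $W^{2,p}_0(\Omega)$ for every compact $J\subset\mathbb{R}$, so an unbounded component must have unbounded $\lambda$-projection; combining this with the disjointness (which forces $\mathcal{P}_\lambda(\mathscr{C}^-_+)\subset(0,\infty)$), the fact that $0\in\overline{\mathcal{P}_\lambda(\mathscr{C}^-_+)}$ coming from the local bifurcation picture, and the connectedness of $\mathcal{P}_\lambda(\mathscr{C}^-_+)$, I obtain $\mathcal{P}_\lambda(\mathscr{C}^-_+)=(0,\infty)$; the case of $\mathscr{C}^-_-$ is symmetric. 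The main obstacle I anticipate is the regularity verification in the second paragraph: checking that the two Puiseux branches from \eqref{iv.7} touching the negative cone remain separated and non-degenerate on a punctured neighborhood of the origin is essentially the bookkeeping carried out in Theorem \ref{th7.8}, but must now be restricted to the negative arcs alone.
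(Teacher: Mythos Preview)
Your proposal is correct and follows essentially the same route as the paper: disjointness via Lemma \ref{le6.6}, regularity of the local negative arcs by the Weierstrass--Malgrange argument from Theorem \ref{th7.8}, exclusion of the loop alternative in Theorem \ref{th7.7} through disjointness, and the $\lambda$-projection identities from the a priori bounds of Theorem \ref{th6.5}. Your write-up is in fact more careful than the paper's on two points: you spell out why the components consist only of negative solutions and why disjointness actually follows from Lemma \ref{le6.6} via the intermediate-value argument on $\mathcal{P}_\lambda$, whereas the paper simply asserts these.
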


\begin{proof}
By Lemma \ref{le6.6}, \eqref{i.1} cannot admit a negative solution at $\l=0$. So,
$\mathscr{C}^{-}_{+}\cap\mathscr{C}^{-}_{-}=\emptyset$. Let us denote by $\gamma_{+}:(0,\varepsilon)\to\mathbb{R}\times W^{2,p}_{0}(\Omega)$ and $\gamma_{-}:(-\varepsilon,0)\to\mathbb{R}\times W^{2,p}_{0}(\Omega)$ the two local curves of negative solutions of \eqref{i.1} that emanate from $(0,0)$ in the direction of $\mathscr{C}^{-}_{+}$ and $\mathscr{C}^{-}_{-}$, respectively. Adapting the argument of the proof of Theorem \ref{th7.8}, it
is easily seen  that $\gamma_{+}$ and $\gamma_{-}$ consist of regular points for sufficiently small $\varepsilon>0$. Thus, by Theorem \ref{th7.7}, there are two global locally injective continuous curves $\Gamma_{+}:(0,\infty)\to\mathbb{R}\times W^{2,p}_{0}(\Omega)$ and $\Gamma_{-}:(-\infty,0)\to\mathbb{R}\times W^{2,p}_{0}(\Omega)$ that extend $\gamma_{+}$ and $\gamma_{-}$, respectively, and satisfy one of the alternatives (a) and (b). By Lemma \ref{le7.1},  $\Gamma_{+}((0,\infty))\subset\mathscr{C}^{-}_{+}$ and $\Gamma_{-}((-\infty,0))\subset \mathscr{C}^{-}_{-}$. Since $\mathscr{C}^{-}_{+}\cap\mathscr{C}^{-}_{-}=\emptyset$, the curves $\Gamma_{\pm}$ cannot form a loop.
Thus, the  alternative (a) cannot happen. Therefore,
$$
    \lim_{t\uparrow\infty}\|\Gamma_{+}(t)\|_{W^{2,p}}=\infty, \quad \lim_{t\downarrow-\infty}\|\Gamma_{-}(t)\|_{W^{2,p}}=\infty.
$$
Owing to Theorem \ref{th6.5}, this entails that $\mathscr{C}^{-}_{+}$ and $\mathscr{C}^{-}_{-}$ are unbounded. Consequently,  $\mathcal{P}_{\l}(\mathscr{C}^{-}_{+})=(0,\infty)$ and $\mathcal{P}_{\l}(\mathscr{C}^{-}_{-})=(-\infty,0)$. This ends the proof.
\end{proof}

The components $\mathscr{C}^{-}_{+}$ and $\mathscr{C}^{-}_{-}$ might loose their a priori bounds at some critical values of $\l$, $\l^*_\pm$, if $N\geq 3$ and $q\geq 6$.

\subsection{The case when $d>\s_{1}^{-1}$}
In such case, due to Theorem \ref{th3.2}(iii),   $\Sigma(\mathfrak{L}_{d})=\emptyset$. Thus, neither the positive solutions nor the negative solutions can bifurcate from $u=0$. Thus, to get the existence of positive solutions, we  proceeded through an indirect argument involving the analytic implicit function theorem (see, e.g.,  \cite[Th. 3.3.2]{B}). As a result, for $d$ sufficiently close to  $\s_{1}^{-1}$, \eqref{i.1} admits, at least, one compact connected component of the set of positive solutions $\mathscr{C}^{+}\subset \mathscr{S}_{d}$ separated away from $u=0$, as illustrated in Figure \ref{F3}. Precisely, the following result holds.

\begin{theorem}
\label{th7.11}
There exists $\nu>0$ such that, for every $d\in (\s_{1}^{-1},\nu)$, the problem \eqref{i.1} has a
compact connected component of the set of positive solutions, $\mathscr{C}^{+}_{d}\subset\mathscr{S}_{d}$, such that $\mathcal{P}_{\l}(\mathscr{C}^{+}_{d})= [\a,\b]$ for some $0<\a\leq \b$.
\end{theorem}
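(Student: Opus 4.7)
The plan is to perturb a regular point of the loop $\mathscr{C}^{+}$ of positive solutions bifurcating from $(0,0)$ at $d=\sigma_{1}^{-1}$, furnished by Theorem \ref{th7.8}, to values $d>\sigma_{1}^{-1}$ via the analytic implicit function theorem, and then to close the argument by combining the a priori bounds of Section 5 with the absence of generalized positive eigenvalues of $\mathfrak{L}_{d}$ for $d>\sigma_{1}^{-1}$ granted by Theorem \ref{th3.2}(iii).

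To implement this, as shown in the proof of Theorem \ref{th7.8}, the local analytic branches of positive solutions emanating from $(0,0)$ at $d=\sigma_{1}^{-1}$ described in Section 4 consist of regular points of $\mathfrak{F}_{\sigma_{1}^{-1}}$, i.e., points where $D_{u}\mathfrak{F}(\lambda,\sigma_{1}^{-1},u)\in GL(W^{2,p}_{0}(\Omega),L^{p}(\Omega))$. Pick such a regular point $(\lambda_{*},u_{*})\in\mathscr{C}^{+}$ with $\lambda_{*}>0$ and $u_{*}\gg 0$. The analytic implicit function theorem applied to
$$\mathfrak{F}:\mathbb{R}\times\mathbb{R}_{+}\times W^{2,p}_{0}(\Omega)\longrightarrow L^{p}(\Omega)$$
at $(\lambda_{*},\sigma_{1}^{-1},u_{*})$ then yields an open neighborhood $V$ of $(\lambda_{*},\sigma_{1}^{-1})$ in $\mathbb{R}\times\mathbb{R}_{+}$ and an analytic map $\mathcal{U}:V\to W^{2,p}_{0}(\Omega)$ such that $\mathcal{U}(\lambda_{*},\sigma_{1}^{-1})=u_{*}$ and $\mathfrak{F}(\lambda,d,\mathcal{U}(\lambda,d))=0$ throughout $V$. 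Since strong positivity is an open condition in $\mathcal{C}^{1,1-\frac{N}{p}}(\bar{\Omega})$, into which $W^{2,p}(\Omega)$ embeds by Rellich--Kondrachov, shrinking $V$ guarantees $\mathcal{U}(\lambda,d)\gg 0$ throughout. Hence, there exists $\nu>\sigma_{1}^{-1}$ such that, for every $d\in(\sigma_{1}^{-1},\nu)$, the element $(\lambda_{*},\mathcal{U}(\lambda_{*},d))$ is a positive solution of \eqref{i.1}.

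For each such $d$, define $\mathscr{C}^{+}_{d}\subset\mathscr{S}_{d}$ to be the connected component of positive solutions containing $(\lambda_{*},\mathcal{U}(\lambda_{*},d))$. By Lemma \ref{le5.3}(ii), $\mathcal{P}_{\lambda}(\mathscr{C}^{+}_{d})\subset[C_{1}(d),C_{2}(d)]\subset(0,\infty)$, while Theorem \ref{th5.4} bounds $\mathscr{C}^{+}_{d}$ in $W^{2,p}_{0}(\Omega)$. To obtain compactness, I would rule out accumulation of $\mathscr{C}^{+}_{d}$ at points outside the positive cone: by Lemma \ref{le7.1}, any such limit would be of the form $(\lambda_{0},0)$, but the normalization procedure carried out in the proof of Theorem \ref{th3.2}(iv) would then force $\lambda_{0}\in\Sigma_{+}(\mathfrak{L}_{d})$, which is empty for $d>\sigma_{1}^{-1}$ by Theorem \ref{th3.2}(iii). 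Together with the properness of $\mathfrak{F}_{d}$ on closed bounded subsets (Lemma \ref{le7.2}), $\mathscr{C}^{+}_{d}$ is then closed, hence compact, and its continuous image $\mathcal{P}_{\lambda}(\mathscr{C}^{+}_{d})$ is a compact interval $[\alpha,\beta]$ with $0<\alpha\leq\beta$.

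The principal difficulty lies in converting the spectral vacancy $\Sigma_{+}(\mathfrak{L}_{d})=\emptyset$ provided by Theorem \ref{th3.2}(iii) into a genuine uniform separation of $\mathscr{C}^{+}_{d}$ from the trivial branch $u=0$; this is resolved by the blowing-up/normalization argument inherent in the proof of Theorem \ref{th3.2}(iv). A secondary—but more concrete—concern is the uniform control of the range $(\sigma_{1}^{-1},\nu)$ on which the IFT-perturbed branch persists inside the positive cone, which reduces to the persistence radius granted by the analytic implicit function theorem at the fixed regular point $(\lambda_{*},\sigma_{1}^{-1},u_{*})$ combined with the openness of strong positivity in $\mathcal{C}^{1}(\bar{\Omega})$.
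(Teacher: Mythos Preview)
Your proof is correct and follows essentially the same approach as the paper: pick a regular positive solution on the loop at $d=\sigma_1^{-1}$, perturb it to $d>\sigma_1^{-1}$ by the analytic implicit function theorem, define $\mathscr{C}^{+}_{d}$ as the component through the perturbed solution, and then combine Lemma~\ref{le5.3}(ii), Theorem~\ref{th5.4}, Lemma~\ref{le7.2}, and the emptiness of $\Sigma_{+}(\mathfrak{L}_{d})$ from Theorem~\ref{th3.2}(iii) to conclude compactness and that $\mathcal{P}_{\lambda}(\mathscr{C}^{+}_{d})$ is a compact interval in $(0,\infty)$. Your explicit invocation of Theorem~\ref{th3.2}(iv) to justify separation from $u=0$ is in fact more detailed than the paper's own one-line appeal to $\Sigma(\mathfrak{L}_{d})=\emptyset$.
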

\begin{proof}
By our previous results in  Section 7.2, we already know that there emanates two branches of analytical curves of positive solutions of $\mathfrak{F}_{d}(\l,u)=0$ from $(0,0)$ at $d=\s_{1}^{-1}$. Moreover, these curves are filled in by regular points. Therefore, there exists $(\l_{0},\s_{1}^{-1},u_{0})\in\mathfrak{F}^{-1}(0)$, with $u_{0}\gg 0$ and $\l_{0}>0$,  such that
$$
  D_{u}\mathfrak{F}(\l_{0},\s_{1}^{-1},u_{0})\in GL(W^{2,p}_{0}(\O),L^{p}(\Omega)).
$$
By the analytic implicit fuction theorem, there exists an open neighborhood of $(\l_{0},\s_{1}^{-1})$, $\mathcal{U}\subset\mathbb{R}\times\mathbb{R}_{+}$, and an analytic mapping, $U:\mathcal{U}\to W^{2,p}_{0}(\Omega)$, such that
\begin{equation*}
    	U(\l_{0},\s_{1}^{-1})=u_{0}, \quad \mathfrak{F}(\l,d,U(\l,d))=0, \ (\l,d)\in\mathcal{U}.
\end{equation*}
By Lemma \ref{le7.1}, $U(\mathcal{U})$ consists of positive solutions for sufficiently small $\mc{U}$.
Consequently, there exists $\nu>0$ such that, for every $d\in(\s_{1}^{-1},\nu)$, there is some $\l=\l_d>0$  for which the problem \eqref{i.1} admits a positive solution, $u_d$. Let $\mathscr{C}^{+}_{d}\subset\mathscr{S}_{d}$
be the connected component of the set of positive solutions through $(\l_d,u_d)$. According to Lemma \ref{le5.3}(ii),
there exist $0<C_1(d)<C_2(d)$ such that $C_1(d) \leq \lambda \leq  C_2(d)$ for all $(\l,u)\in\mathscr{C}^+_d$.
Moreover, thanks to Theorem \ref{th5.4}, there exists a constant $C>0$ such that
\begin{equation*}
    	\sup_{(\l,u)\in\mathscr{C}^{+}_d}\|u\|_{W^{2,p}}<C.
\end{equation*}
Therefore, since by Lemma \ref{le7.2}, the operator $\mf{F}_{d}$ is proper on closed and bounded subsets, $\mathscr{C}^+_d$ is a compact connected component of the set of positive solutions. As it is separated away from $u=0$, because $\Sigma(\mathfrak{L}_{d})=\emptyset$, it becomes apparent that
$\mathcal{P}_{\l}(\mathscr{C}^{+}_d)= [\a,\b]$ for some $\a\leq \b$, because
$\mathscr{C}^+_d$ is compact and connected and $\mathcal{P}_\l$ is continuous. This ends the proof.
\end{proof}

As far as concerns the negative solutions, as usual, their structure depends on the values of $q$.
Indeed, when $q$ is an odd integer, adapting the proof of Theorem \ref{th7.11}, it is easily seen that the following result holds.

\begin{theorem}
\label{th7.12}
Suppose $q\geq 4$ is an even integer. Then, there exists $\nu>0$ such that, for every $d\in (\s_{1}^{-1},\nu)$, the problem \eqref{i.1} has a
compact connected component of negative solutions, $\mathscr{C}^{-}_{d}\subset \mathscr{N}_{d}$, such that $\mathcal{P}_{\l}(\mathscr{C}^{-}_d)= [\a,\b]$ for some $\a\leq \b<0$, as illustrated in the left plot of Figure \ref{F3}.
\end{theorem}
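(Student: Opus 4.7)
The plan is to mimic the strategy used for positive solutions in Theorem \ref{th7.11}, replacing the loop of positive solutions bifurcating at $d=\sigma_1^{-1}$ by the loop of negative solutions delivered by Theorem \ref{th7.9} (so the argument naturally applies when $q\ge 5$ is odd, as in the left plot of Figure \ref{F3}, consistently with the paragraph that precedes the statement). First I would recall that, at $d=\sigma_1^{-1}$, Theorem \ref{th7.9} provides a closed continuous curve of negative solutions of $\mathfrak{F}(\cdot,\sigma_1^{-1},\cdot)=0$ with vertex at $(0,0)$. The Weierstrass--Malgrange factorization argument already carried out in the proof of Theorem \ref{th7.8}, applied to the reduced map $\mathfrak{G}_d(\lambda,x)=x\,g_d(\lambda,x)$ and to the two analytic branches $\lambda=\varphi_1(x),\varphi_2(x)$ on the side $x<0$ extracted from the asymptotics \eqref{iv.8}, shows that every point of this negative loop other than $(0,0)$ is a regular point of $\mathfrak{F}(\cdot,\sigma_1^{-1},\cdot)$. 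Hence there exists $(\lambda_0,u_0)$ with $\lambda_0<0$ and $u_0\ll 0$ such that $D_u\mathfrak{F}(\lambda_0,\sigma_1^{-1},u_0)\in GL(W^{2,p}_0(\Omega),L^p(\Omega))$.

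Next I would apply the analytic implicit function theorem at $(\lambda_0,\sigma_1^{-1},u_0)$ to obtain an open neighborhood $\mathcal{U}\subset\mathbb{R}\times\mathbb{R}_+$ of $(\lambda_0,\sigma_1^{-1})$ and an analytic map $U:\mathcal{U}\to W^{2,p}_0(\Omega)$ satisfying $U(\lambda_0,\sigma_1^{-1})=u_0$ and $\mathfrak{F}(\lambda,d,U(\lambda,d))=0$ on $\mathcal{U}$. Shrinking $\mathcal{U}$ and invoking Lemma \ref{le7.1}, all values $U(\lambda,d)$ remain strictly negative. This yields $\nu>\sigma_1^{-1}$ such that, for every $d\in(\sigma_1^{-1},\nu)$, \eqref{i.1} admits a negative solution $(\lambda_d,u_d)$; I then let $\mathscr{C}^-_d\subset\mathscr{N}_d$ be the connected component of negative solutions through $(\lambda_d,u_d)$.

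To close the argument I would rely on the uniform bounds of Section 6. Lemma \ref{le6.3}(ii) gives $\mathcal{P}_\lambda(\mathscr{C}^-_d)\subset[C_1(d),C_2(d)]$ with $C_1(d)<C_2(d)<0$, while Theorem \ref{th6.4} applied to the compact interval $J=[C_1(d),C_2(d)]$ provides a uniform $W^{2,p}$ bound on $\mathscr{C}^-_d$. Since $\Sigma(\mathfrak{L}_d)=\emptyset$ by Theorem \ref{th3.2}(iii), and since by Lemma \ref{le7.1} a limit of negative solutions can only be either strictly negative or $u=0$, the component $\mathscr{C}^-_d$ is closed in $\mathbb{R}\times W^{2,p}_0(\Omega)$ and is separated away from $u=0$. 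The properness of $\mathfrak{F}_d$ on closed and bounded sets (Lemma \ref{le7.2}) then forces $\mathscr{C}^-_d$ to be compact, and the continuity of $\mathcal{P}_\lambda$ on the connected set $\mathscr{C}^-_d$ yields $\mathcal{P}_\lambda(\mathscr{C}^-_d)=[\alpha,\beta]$ for some $\alpha\le\beta<0$.

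The main obstacle I expect lies in the first step, namely certifying that the negative arc of the loop at $d=\sigma_1^{-1}$ consists of regular points of $\mathfrak{F}$ away from the vertex $(0,0)$. This requires revisiting the Weierstrass factorization $g_d(\lambda,x)=c(\lambda,x)(\lambda-\varphi_1(x))(\lambda-\varphi_2(x))$ used in Theorem \ref{th7.8} and verifying that the two branches $\varphi_1,\varphi_2$ remain distinct and have nonvanishing derivative on the side $x<0$ as well (which, by Proposition \ref{pr4.4}, is precisely where the negative solutions live), a check that can be read off from the explicit leading terms in \eqref{iv.8}. Once this regularity is in place, the remaining compactness, properness and connectedness steps are a direct transcription of the argument used in the proof of Theorem \ref{th7.11}.
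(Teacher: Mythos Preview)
Your proposal is correct and follows exactly the approach the paper intends: the paper's ``proof'' of Theorem \ref{th7.12} consists of the single sentence ``adapting the proof of Theorem \ref{th7.11}, it is easily seen that the following result holds,'' and your write-up is precisely that adaptation, invoking Theorem \ref{th7.9} for the loop of negative solutions at $d=\sigma_1^{-1}$, the regularity argument from the proof of Theorem \ref{th7.8}, the implicit function theorem, and then Lemma \ref{le6.3}(ii), Theorem \ref{th6.4}, and Lemma \ref{le7.2} for compactness. You also correctly diagnosed the typo in the statement: the hypothesis should read ``$q\ge 5$ is an odd integer'' (as the preceding paragraph, the reference to the left plot of Figure \ref{F3}, and the applicability of Lemma \ref{le6.3} and Theorem \ref{th6.4} all confirm), and the closing paragraph of Section 7.3 should accordingly treat the even case as the open one.
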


Finally, suppose that $q$ is an odd integer. Then, although the argument of the proof of Theorem \ref{th7.11} provides us with a connected component, $\mathscr{C}^-_d$, of the set of negative solutions of \eqref{i.1} separated away from
$u=0$ for every $d>\s_1^{-1}$ sufficiently close to $\s_1^{-1}$, and $\mathscr{C}_d^-$ possesses uniform a priori bounds on compact intervals of $\l$ if $N=1, 2$, or $N=3$ and $q=4$, we do not know yet whether, or not, $\mathscr{C}_d^-$ is bounded, or semi-bounded, or  simply $\mc{P}_\l(\mathscr{C}_d^-)=\R$, as suggested by the right plot of Figure \ref{F3}. This remains an open problem in this paper.

\end{document}